\documentclass[12pt]{amsart}
\usepackage{amscd,amssymb,graphics}

\usepackage{amsfonts}
\usepackage{amsmath}
\usepackage{amsxtra}
\usepackage{latexsym}
\usepackage[mathcal]{eucal}

\usepackage{graphics,colortbl}

\input xy
\xyoption{all}
\usepackage{epsfig}

\usepackage[pdftex,bookmarks,colorlinks,breaklinks]{hyperref}

\oddsidemargin 0.1875 in \evensidemargin 0.1875in
\textwidth 6 in 
\textheight 230mm \voffset=-4mm

\newtheorem{theorem}{Theorem}[section]
\newtheorem{fact}[theorem]{Fact}
\newtheorem{corollary}[theorem]{Corollary}
\newtheorem{lemma}[theorem]{Lemma}
\newtheorem{proposition}[theorem]{Proposition}

\newtheorem{question}[theorem]{Question}
\newtheorem{definition}[theorem]{Definition}
\numberwithin{equation}{section}
\newtheorem{claim}{Claim} 

\theoremstyle{remark}
\newtheorem{remark}[theorem]{Remark}
\newtheorem{example}[theorem]{Example}

\newcommand{\ben}{\begin{enumerate}}
\newcommand{\een}{\end{enumerate}}
\newcommand{\bit}{\begin{itemize}}
\newcommand{\eit}{\end{itemize}}

\def\R {{\Bbb R}}
\def\Q {{\Bbb Q}}
\def \F {{\Bbb F}}

\def\C {{\Bbb C}}
\def\N{{\Bbb N}}
\def\T{{\Bbb T}}

\def\Z {{\Bbb Z}}

\def\F{{\Bbb F}}

\def\H{{\mathcal H}}
\def\eps{{\varepsilon}}

\def\QED{\nobreak\quad\ifmmode\roman{Q.E.D.}\else{\rm Q.E.D.}\fi}

\def\a {\alpha}

\def\PGL{\operatorname{PGL}}
\def\PSL{\operatorname{PSL}}
\def\GL{\operatorname{GL}}
\def\SL{\operatorname{SL}}
\def\UT{\operatorname{UI}}
\def\UT{\operatorname{UT}}

\def\SO{\operatorname{SO}}
\def\SU{\operatorname{SU}}
\def\H{\operatorname{H}}
\def\O{\operatorname{O}}

\def\STP{\operatorname{ST^+}}
\def\TP{\operatorname{T^+}}
\def\STM{\operatorname{ST^{--}}}
\def\TM{\operatorname{T^{--}}}
\begin{document}

\title[]{Minimality of topological matrix groups and Fermat primes}

\dedicatory{Dedicated to Prof. D. Dikranjan on the occasion of
	his 70th birthday}

		\author[]{M. Megrelishvili}
		\address[M. Megrelishvili]
	{\hfill\break Department of Mathematics
		\hfill\break
		Bar-Ilan University, 52900 Ramat-Gan	
		\hfill\break
		Israel}
		\email{megereli@math.biu.ac.il}

\author[]{M. Shlossberg}
\address[M. Shlossberg]
{\hfill\break School of Computer Science
	\hfill\break Reichman University, 4610101 Herzliya 
	\hfill\break Israel}
\email{menachem.shlossberg@post.idc.ac.il}

\subjclass[2020]{20H20, 20G25, 54H11, 11Sxx, 11F85, 54H13}  

\keywords{Iwasawa decomposition, local field,  Fermat primes, matrix group, minimal topological group, projective linear group, special linear group}

 \date{2022, August}  
\thanks{This research was supported by a grant of the Israel Science Foundation (ISF 1194/19) 
	and also by the Gelbart Research Institute at the Department of Mathematics, Bar-Ilan  University} 
  
\begin{abstract}  	 
Our aim is to study topological minimality of some natural matrix groups. We show  that the special upper triangular group $\STP(n,\F)$ is minimal for every local field $\F$ of characteristic $\neq 2$. This result is new even for the field $\R$ of reals and it leads to some important consequences. We prove criteria for the minimality and  total minimality of the special linear group $\SL(n,\F)$, where $\F$ is a subfield of a local field. This extends some known results of Remus--Stoyanov (1991) and Bader--Gelander (2017).  

 One of our main applications is a characterization of Fermat primes, which asserts that for an odd prime  $p$ the following conditions are equivalent: 
 \begin{enumerate} 
 	\item $p$ is a  Fermat prime;
 	\item $\SL(p-1,\Q)$ is minimal, where $\Q$ is the field of rationals equipped with the $p$-adic topology;
 	\item  $\SL(p-1,\Q(i))$ is minimal, where $\Q(i) \subset \C$ is the Gaussian rational field.	
 \end{enumerate} 
\end{abstract}

\maketitle
	
\setcounter{tocdepth}{1}
	\tableofcontents

	\section{Introduction} 

It is a well-known phenomenon that many natural topological groups in analysis and geometry are minimal \cite{DPS89,Stoyanov,RS,Mayer,Usp, Gl, DM14, BT, Duchesne}. 
 For a survey, regarding minimality in topological groups, we refer 
to \cite{DM14}. 
		
		All topological spaces in the sequel are Hausdorff.
		A  
		topological group $G$ is  \emph{minimal} \cite{Doitch,S71} if 
		every continuous isomorphism $f \colon G\to H$, with $H$ 
		a  topological group, is a topological isomorphism 
		(equivalently, if $G$ does not admit a strictly coarser group topology). 
			If every quotient of $G$ is minimal, then $G$ is called  \emph{totally minimal} \cite{DP}.  
			Recall also \cite{MEG04,DM10} that a subgroup $H$ of $G$ is said to be \emph{relatively minimal} (resp., \emph{co-minimal}) in $G$ if every coarser 
		group topology on $G$ induces on $H$ (resp., on the coset set $G/H$) the original topology. 
	 
	 \vskip 0.2cm 
	 Let $\F$ be a topological field. Denote by $\GL(n,\F)$  the group of $n\times n$ invertible matrices over the field $\F$  with the natural pointwise topology 
	 inherited from $\F^{n^2}$. Consider the following topological subgroups of $\GL(n,\F)$:  
	 \bit 
	 \item $\SL(n,\F)$ -- Special Linear Group -- matrices with determinant equal to $1$. 
	 \item $\TP(n,\F)$ -- Upper Triangular invertible  matrices.
	 \item $\STP(n,\F):=\TP(n,\F) \cap \SL(n,\F)$ --  Special Upper Triangular group. 
	 \item $\mathsf{N}:=\UT(n,\F)$--Upper unitriangular matrices. 
	 \item $\mathsf{D}$ -- Diagonal invertible matrices. 
     \item $\mathsf{A}:=\mathsf{D} \cap \SL(n,\F)$. Note that $\mathsf{N}\mathrm{A}=\STP(n,\F)$.
	 \eit  
	Consider also the following projective linear groups (equipped with the quotient topology):
	\bit
	  \item $\PGL(n,\F)= \GL(n,\F)/Z(\GL(n,\F)).$ 
	 \item $\PSL(n,\F)=  \SL(n,\F)/Z(\SL(n,\F)).$  
\eit
	 \vskip 0.2cm  
 \subsection{Main results}
 
 In this paper, we study minimality conditions in 
 topological  matrix groups over local fields and their subfields. 
 We thank D. Dikranjan whose kind suggestions led us to the following question  which hopefully opens several fruitful research lines. 
 
 \begin{question} Let $G$ be a subgroup of $\GL(n,\F)$.  Under which conditions 
 	is $G$ (totally) minimal? 
 \end{question} 

 We prove  in Theorem \ref{thm:sutmin} that the solvable group  $\STP(n,\F)$ is minimal for every local field $\F$ of characteristic distinct from $2$ and every $n\in\N$. 
   This result is new even for the field $\R$ of reals. 

 Using Iwasawa decomposition, 
 our results on $\STP(n,\F)$ lead 
  (see Theorem \ref{pro:pslntm}) to the total minimality of $\SL(n,\F)$ 
 for local fields $\F$ of characteristic distinct from~$2$.
 According to an important result of Remus and Stoyanov \cite{RS},   
 $\SL(n,\R)$ is totally minimal. More generally, a connected semi-simple Lie group is totally minimal if and only if its center is finite.  
 Recent results of Bader and Gelander \cite{BG}, obtained in a different way, imply that $\SL(n,\F)$ is totally minimal for every 
 local field $\F$ with any characteristic. 

We provide criteria  for the minimality
 and  total minimality of $\SL(n,\F)$, where $\F$ is a  subfield of a local field 
 (see Theorem \ref{t:SLSMALL} and Proposition~\ref{pro:minsl}).  Corollary \ref{cor:tmsln}(1) shows that $\SL(2,\F)$ is totally minimal,  while  
   $\SL(2^k,\F)$ is minimal for every $k\in \N$, by Corollary 
\ref{cor:p2}. It also turns out that $\SL(n,\F)$ is totally minimal for every topological subfield $\F$ of $\R$ (see Corollary \ref{cor:tmsln}(2)). 

  Sometimes for the same field, according to the parameter $n \in \N$, we have all three possibilities: minimality, total minimality  and the absence of minimality. Indeed, see Corollary \ref{ex:slnotmin} which gives a trichotomy concerning the group  $\SL(n,\Q(i))$,  where $\Q(i):=\{a+bi: a,b\in\Q\}$ is the Gaussian rational field. 
  If $n$ is not a power of $2,$ then $\STP(n,\Q(i))$ and $\SL(n,\Q(i))$ are not minimal. 

 By Corollary \ref{c:p-adic}, 
 	if $p-1$ is not a power of $2,$
 	then $\SL(p-1,(\Q,\tau_p))$ is not minimal, 
 	where $(\Q,\tau_p)$ is the field of rationals with the $p$-adic topology (treating it as a subfield of the local field $\Q_p$ of all $p$-adic numbers). 
 	Furthermore, for every subfield $\F$ of $\Q_p,$ the groups $\SL(n,\F)$ and $\PSL(n,\F)$ are totally minimal for every $n$ which is coprime to $p-1.$ 
 	
  It is known that if $p=2^k+1$ is an odd prime then $k$ is a power of $2$. 
 These are the famous \emph{Fermat primes} 
 $F_n=2^{2^n}+1.$ As of 2021, the only known Fermat primes are $F_0 = 3, \ F_1 = 5,\ F_2 = 17, \ F_3 = 257,$ and $F_4 = 65537.$  
The following theorem is  one of our main 
applications (proved in Theorem \ref{thm:fermat}), 
which characterizes Fermat primes in terms of topological minimality.  
\begin{theorem} \label{t:MainIntro} 
	For an odd prime  $p$ the following conditions are equivalent: \ben\item $p$ is a  Fermat prime;
	\item $\SL(p-1,(\Q,\tau_p))$ is minimal;
	\item  $\SL(p-1,\Q(i))$ is minimal.
	\een
\end{theorem} 
We prove in Theorem \ref{thm:ptm} that the projective general linear group 
$\PGL(n,\F)$ is totally minimal for every local field $\F$ and every $n\in \N$.  
 The same holds for topological subfields $\F$  of $\R$ as long as $n$ is  odd (see Theorem \ref{pro:sofr}).  

 
 %

\vskip 0.5cm 
	 \subsection{Some known results} 
	 One of the first 
	 examples (due to Dierolf and Schwanengel \cite{DS79}) of a minimal locally compact group which is not totally minimal is 
	 \begin{align*}
	 	\R \rtimes \R_{+} & \cong \left\{\left( \begin{array}{cc}
	 		a & b  \\
	 		0 & 1   
	 	\end{array}  \right) \middle|  \ a \in \R_{+}, \  b \in \R \right\}.
	 \end{align*} 
	 
	Compact  groups are totally minimal. 
	 Minimal abelian groups  are necessarily precompact by a theorem of Prodanov--Stoyanov \cite{PS}. An interesting and useful generalization of this classical result has been found by T. Banakh \cite{Banakh}.  
	 For every minimal group $G$ its 
	 center $Z(G)$ is precompact. So, if $G$ is, in addition, 
	 \textit{sup-complete} (i.e.,  complete with respect to its two-sided uniformity) then $Z(G)$ must be compact. For this reason, the group $\GL(n,\R)$ is not minimal. 
	 However, there are closed nonminimal subgroups  of $\GL(n,\R)$ with compact (even, trivial) center. Indeed, the rank-two  discrete free group $F_2$ is embedded into $\SL(2,\Z).$ 
	 Now recall that $F_2$, being residually finite, admits a precompact group topology. 
	 

	 The minimality of Lie groups has been studied by many authors. Among others, we refer to van Est \cite{Est}, Omori \cite{Omori}, Goto \cite{Goto}, Remus--Stoyanov \cite{RS} and the references therein.  
	  By Omori \cite{Omori}, connected nilpotent Lie groups with compact center are minimal. 
	 In particular, the classical Weyl--Heisenberg group $(\T \oplus \R) \rtimes_{\a} \R$ is minimal, where $\T=\R/ \Z$.
	Moreover, as it was proved 
	in 
	 \cite{DM10}, the 
	 \emph{Generalized Weyl--Heisenberg groups} 
	 $H_0(V)=(\T \oplus V) \rtimes V^*$, defined for every 
	 Banach space $V$, are minimal. 
	 
	 
	 The affine groups $\R^n \rtimes \GL(n,\R)$ are minimal (Remus--Stoyanov \cite{RS}). 
	   Every closed matrix subgroup $G \leq \GL(n,\R)$ is a retract of a minimal Lie group of dimension $2n+1 + \dim(G)$. 
	 For every locally compact abelian group $G$ and its dual $G^*,$ the generalized Heisenberg group $(\T \oplus G^*) \rtimes G$ is minimal (see \cite{MEG95}). Therefore, every locally compact abelian group is a group retract of a locally compact minimal group. By \cite{MEG08}, every topological group is a group retract of a minimal group.

	 By a result of Mayer \cite{Mayer}, a locally compact connected group is totally minimal if and only if for every closed normal subgroup $N$ of $G$ the center $Z(G/N)$ is compact. In addition to $\SL(n,\R)$,  
	  the following concrete classical groups are totally minimal: the Euclidean motion group $\R^n \rtimes \SO(n,\R)$ and
	 the Lorentz group $\R^n \rtimes \SL(n,\R)$. 

 The unitary group of an infinite-dimensional Hilbert space is 
one of the most influential examples of a totally minimal group (see Stoyanov \cite{Stoyanov}).
By a result of Duchesne \cite{Duchesne},
the isometry group of the infinite dimensional separable hyperbolic
space 
is minimal. 
The locally compact solvable groups having all subgroups minimal were characterized recently in \cite{XDSD}.

	 \vskip 0.1cm  
	\noindent \textbf{Acknowledgment.}  We thank the referee for the valuable report.  We also thank U. Bader, D. Dikranjan, A. Elashvili,\ B. Kunyavskii and G. Soifer for their useful suggestions.  
	After reading a preliminary version of this work, Dikranjan pointed out 
	the connection between our results and Fermat primes. 
	      
	 \vskip 0.3cm 
	 \section{Preliminaries}

	 A subset 
	 $B \subseteq \F$ of a topological field $\F$ is \textit{bounded} if for every neighborhood $U(0)$ there exists a neighborhood $V(0)$ such that $VB  \subseteq U$. 
	 A subset $U$ of $\F$ that contains zero is \emph{retrobounded} if $(\F \setminus U)^{-1}$ is bounded.

If retrobounded neighborhoods of zero 
	form a fundamental system of neighborhoods, then $\F$ is said to be \emph{locally retrobounded}. 
	 It is equivalent (see \cite[Theorem~19.12]{Warner-R}) to say  that all neighborhoods of zero are retrobounded.

	\begin{remark} \label{r:strict}  
	The completion $\widehat\F$ of a locally retrobounded field $\F$ is again a locally retrobounded field 
			(see \cite[Theorems 13.9 and 8.3]{Warner-R}). 
			In general, the completion of a topological field is only a commutative topological ring and not always a field 
			(see \cite[p. 439]{Warner-R}).
	\end{remark}   

	Following Nachbin, a topological field $\F$ is said to be \emph{strictly minimal} (or,    \emph{straight}, \cite{Warner-R}) if $\F$ is a minimal $\F$-module over $\F$. Any non-discrete \emph{locally retrobounded field} 
	$K$ is strictly minimal. It is still unknown if any strictly minimal topological field is necessarily locally retrobounded (see \cite[p. 487]{Warner-R}). 
By \cite{MEG98}, a topological field $\F$ is strictly minimal if and only if the semidirect product $\F \rtimes \F^{\times}$ is a minimal topological group. 
Compare with the case of the group $\R \rtimes \R_{+}$ (Dierolf and Schwanengel) mentioned above.  
For 
another 
similar result, see Theorem \ref{t1} below.




\vskip 0.2cm  
A topological field is locally retrobounded if, for example, it is linearly ordered  or topologized by an absolute value.

\begin{definition} \rm{(see, for example, \cite[p. 26]{MA})}  
	A	\emph{local field} is a 
	non-discrete locally compact field.
\end{definition}
Every local field $\F$ admits an absolute value (induced by the Haar measure). 
Therefore, any subfield of a local field is locally retrobounded. 
If the set $\{|n\cdot 1_\F|: n\in \N\}$ is unbounded, then $\F$ is called \emph{archimedean}. Otherwise, $\F$ is a \emph{non-archimedean} local field (see \cite{V}).  
A subset of a local field is compact if and only if it is closed and bounded.	

	\subsection{Roots of unity} 	
Denote by $\mu_n(\F)$ the finite subgroup of $\F^{\times}$ consisting of all $n$-th roots of unity. Then $\SL(n,\F)$ has finite center (e.g., see \cite[3.2.6]{RO}) 
 $$Z=Z(\SL(n,\F))=\{\lambda I:\lambda\in \mu_n\}$$  
which, sometimes, will be denoted in Sections \ref{s:3} and \ref{s:4} simply by $Z$.
\newline 

The following known lemma will be used in the sequel. We prove it  for the sake of completeness.

\begin{lemma}\label{lem:rinqi}
	If $z^n=1$ and $z\in \Q(i)$, then $z\in\{\pm 1,\pm i\}$.
\end{lemma}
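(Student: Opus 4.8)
The plan is to combine two facts about $z$: that it lies on the unit circle, and that it is an algebraic integer.

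First I would record that $z^n = 1$ forces $|z| = 1$. Indeed, taking moduli gives $|z|^n = 1$, and since $|z|$ is a nonnegative real, $|z| = 1$. Writing $z = a + bi$ with $a, b \in \Q$, this reads $a^2 + b^2 = 1$. This constraint alone is far from enough --- the unit circle contains infinitely many rational points, such as $\tfrac35 + \tfrac45 i$ --- so the full strength of the hypothesis $z^n = 1$ must be used.

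The decisive step is integrality. Being a root of the monic integer polynomial $X^n - 1$, the number $z$ is an algebraic integer lying in $\Q(i)$; since the ring of integers of $\Q(i)$ is the ring $\Z[i]$ of Gaussian integers, this already gives $a, b \in \Z$. For a self-contained argument I would instead distinguish two cases: if $z \in \Q$ it is a rational algebraic integer, hence $z \in \Z$, and $a^2 = 1$ gives $z = \pm 1$; if $z \notin \Q$ then $b \neq 0$, the conjugate of $z$ over $\Q$ is $\bar z = a - bi$, and the minimal polynomial
\[
(X - z)(X - \bar z) = X^2 - 2a X + (a^2 + b^2) = X^2 - 2a X + 1
\]
must have integer coefficients, so $2a \in \Z$.

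To conclude in the remaining case, the relation $(2a)^2 \le 4(a^2 + b^2) = 4$ leaves $2a \in \{0, \pm 1, \pm 2\}$; the odd values force $b^2 = \tfrac34$, impossible for $b \in \Q$, and the values $\pm 2$ force $b = 0$, contrary to $b \neq 0$, so $2a = 0$, whence $a = 0$, $b = \pm 1$, and $z = \pm i$. Collecting all cases gives $z \in \{\pm 1, \pm i\}$. I expect the integrality step to be the only real obstacle: everything else is elementary, but without knowing that a root of unity in $\Q(i)$ must be a Gaussian integer, the equation $a^2 + b^2 = 1$ cannot isolate the four desired values.
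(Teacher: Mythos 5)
Your proof is correct and follows essentially the same route as the paper's: both arguments reduce to showing that $z+\bar z = 2a$ is a rational algebraic integer, hence an integer, and then use $|z|=1$ together with $a,b\in\Q$ to run the finite case check. The only difference is in how integrality of $2a$ is justified --- the paper cites closure of algebraic integers under addition plus the fact that a rational algebraic integer lies in $\Z$, whereas you read it off from the minimal polynomial $X^2-2aX+1$ --- and you spell out the concluding case analysis that the paper leaves terse.
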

\begin{proof}
	If $z^n=1$, then $(\bar z)^{n}=1,$  where $\bar z$ is the complex conjugate of $z$. It follows that both $z$ and $\bar z$ are algebraic integers.
	By \cite[Proposition 6.1.5]{IR}, $z+\bar z$ is an  algebraic integer. Since $z\in \Q(i)$, the algebraic integer  $z+\bar z$ is also rational. By
	\cite[Proposition 6.1.1]{IR}, $z+\bar z$ is an integer. As $|z|=1$ and $z\in \Q(i),$ we deduce that $z\in\{\pm 1,\pm i\}$.
\end{proof}	

The following result about the simplicity of $\PSL(n,\F)= \SL(n,\F)/Z(\SL(n,\F))$ is due to Jordan and Dickson (see \cite[3.2.9]{RO}). 
\begin{fact} \label{fac:simple}
	Let $\F$ be a field. If either $n>2$ or $n=2$ and $|\F|>3$, then $\PSL(n,\F)$ is algebraically simple.
\end{fact}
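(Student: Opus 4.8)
The plan is to derive the simplicity of $\PSL(n,\F)$ from \emph{Iwasawa's simplicity criterion}, which states: if a group $G$ acts faithfully and primitively on a set $X$, if $G$ is perfect (i.e. $G=[G,G]$), and if some point-stabilizer $G_x$ contains an abelian normal subgroup $A$ whose $G$-conjugates generate $G$, then $G$ is simple. I would apply this with $G=\PSL(n,\F)$ acting on the set $X$ of lines through the origin in $\F^n$. First I would check that $\SL(n,\F)$ acts $2$-transitively on $X$: given two ordered pairs of distinct lines, one extends chosen representatives to bases of $\F^n$, builds the linear map sending the first basis to the second, and rescales the image of one vector inside its target line to force determinant $1$. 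A $2$-transitive action on a set with more than one point is primitive, and the kernel of the action of $\SL(n,\F)$ on $X$ is exactly the group $Z$ of scalar matrices in $\SL(n,\F)$; hence $\PSL(n,\F)=\SL(n,\F)/Z$ acts faithfully and primitively on $X$.

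Next I would establish that $\SL(n,\F)$ is generated by \emph{transvections} $t_{\phi,v}\colon x\mapsto x+\phi(x)v$ with $\phi(v)=0$ — this is Gaussian elimination over a field — and that $\SL(n,\F)$ is \emph{perfect}. Perfectness is the step in which the hypothesis ``$n>2$, or $n=2$ and $|\F|>3$'' is genuinely used. For $n\ge 3$ one writes each elementary transvection as a commutator using a third index, via $[I+\lambda E_{ik},\,I+E_{kj}]=I+\lambda E_{ij}$. For $n=2$ no third index is available; instead, conjugating $I+\lambda E_{12}$ by $\mathrm{diag}(a,a^{-1})$ yields $I+a^{2}\lambda E_{12}$, so that $[\mathrm{diag}(a,a^{-1}),\,I+\lambda E_{12}]=I+(a^{2}-1)\lambda E_{12}$; choosing $a\in\F^{\times}$ with $a^{2}\neq 1$ — which exists precisely when $|\F|>3$ — lets one realize every transvection as a commutator. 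Since $\PSL(n,\F)$ is a quotient of the perfect group $\SL(n,\F)$, it is perfect as well.

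For the abelian normal subgroup, I would fix the line $L=\langle e_1\rangle$ and let $A\le\SL(n,\F)$ be the group of transvections with centre $L$, namely the maps $x\mapsto x+\phi(x)e_1$ with $\phi(e_1)=0$. Composition of two such maps simply adds their functionals, so $A$ is abelian, and $A$ is normalized by the stabilizer of $L$ in $\SL(n,\F)$; its image $\bar A$ is then an abelian normal subgroup of the stabilizer of $L$ in $G=\PSL(n,\F)$. Because every transvection is $\SL(n,\F)$-conjugate into $A$ and the transvections generate $\SL(n,\F)$, the $G$-conjugates of $\bar A$ generate $G$. All hypotheses of Iwasawa's criterion are now in force, so $\PSL(n,\F)$ is simple.

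I expect the perfectness step in the case $n=2$ to be the main obstacle, and it is exactly there that the restriction $|\F|>3$ is indispensable: $\PSL(2,\F)$ fails to be simple for $|\F|\in\{2,3\}$, where $\PSL(2,2)\cong S_3$ and $\PSL(2,3)\cong A_4$. A secondary point that needs care is the permutation-group input to Iwasawa's argument — that a nontrivial normal subgroup $N\trianglelefteq G$ of a primitive group acts transitively, which forces $G=N\cdot G_x$; combined with $A\trianglelefteq G_x$ this gives $G=NA$, whence $G/N\cong A/(A\cap N)$ is abelian and therefore trivial because $G$ is perfect. Beyond these points, the argument reduces to the standard matrix identities for transvections.
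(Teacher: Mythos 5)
Your proposal is correct: the Iwasawa-criterion argument (two-transitive action on projective lines, perfectness of $\SL(n,\F)$ via the commutator identities with the $|\F|>3$ caveat for $n=2$, and the abelian group of transvections with fixed centre) is a complete and standard proof of the Jordan--Dickson theorem. Note, however, that the paper itself gives no proof of this statement at all; it is quoted as a known Fact with a citation to Robinson \cite[3.2.9]{RO}, and the argument given there is essentially the same transvection-based one you outline, so there is nothing to compare beyond observing that your proof matches the classical one in the cited source.
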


\vskip 0.3cm 
\subsection{$G$-minimality and semidirect products} 
The following result is known as \textit{Merson's Lemma} (\cite[Lemma 7.2.3]{DPS89} or \cite[Lemma 4.4]{DM14}).
\begin{fact}\label{firMer}
	Let $(G,\gamma)$ be a (not necessarily Hausdorff) topological group and $H$ be a subgroup of $G.$ If $\gamma_1\subseteq \gamma$ is a coarser group topology on $G$ such that $\gamma_1|_{H}=\gamma|_{H}$ and $\gamma_1/H=\gamma/H$, then $\gamma_1=\gamma.$
\end{fact}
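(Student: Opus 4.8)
The plan is to show that the identity map $\mathrm{id}\colon (G,\gamma_1)\to (G,\gamma)$ is continuous; since $\gamma_1\subseteq\gamma$ already holds, this yields $\gamma_1=\gamma$. Because both topologies are group topologies, left translations are homeomorphisms in each, so it suffices to prove continuity at the neutral element $e$; equivalently, I would show that every net $\gamma_1$-converging to $e$ also $\gamma$-converges to $e$ (the reverse implication being immediate from $\gamma_1\subseteq\gamma$). Moreover, using the standard criterion that a net converges to $p$ precisely when every subnet admits a further subnet converging to $p$, it is enough to produce, from an arbitrary net $(x_i)$ with $x_i\to_{\gamma_1}e$, one subnet that $\gamma$-converges to $e$.

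The core of the argument combines the two hypotheses through the canonical projection $\pi\colon G\to G/H$, which is open for any group topology. First I would push the net downward: since $\pi$ is $\gamma_1$-continuous, $\pi(x_i)\to\pi(e)$ in $\gamma_1/H$, and as $\gamma_1/H=\gamma/H$ this convergence holds in $\gamma/H$ as well. Then I would lift it back up using the \emph{net-lifting property of open maps}: because $\pi\colon (G,\gamma)\to (G/H,\gamma/H)$ is open, continuous and surjective, there is a subnet $\bigl(\pi(x_{i_m})\bigr)_m$ and elements $z_m\in G$ with $\pi(z_m)=\pi(x_{i_m})$ and $z_m\to_\gamma e$. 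The required net of liftings is constructed by indexing over pairs $(O,j)$, where $O$ runs through $\gamma$-neighborhoods of $e$ and $j$ through the index set with $\pi(x_j)\in\pi(O)$, and by choosing in each $O$ a preimage of $\pi(x_j)$; this index set is directed, the map $(O,j)\mapsto j$ is cofinal, and the resulting net converges to $e$. Setting $h_m:=z_m^{-1}x_{i_m}$, the equality $\pi(z_m)=\pi(x_{i_m})$ forces $h_m\in H$.

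Finally I would close the loop via the subspace hypothesis. Since $z_m\to_\gamma e$ we also have $z_m\to_{\gamma_1}e$, while $x_{i_m}\to_{\gamma_1}e$ as a subnet, so $h_m=z_m^{-1}x_{i_m}\to_{\gamma_1}e$ by continuity of the $\gamma_1$-operations; as each $h_m\in H$, this means $h_m\to e$ in $\gamma_1|_H=\gamma|_H$, hence $h_m\to_\gamma e$ in $G$. Then $x_{i_m}=z_m h_m\to_\gamma e$ by continuity of $\gamma$-multiplication, producing the desired subnet. The main obstacle is exactly this lifting step. A purely neighborhood-based attempt (choose symmetric $V\in\gamma$ with $VV\subseteq U$, observe that $VH\in\gamma_1$ by the quotient condition, and pick $W\in\gamma_1$ with $W\cap H\subseteq V$ by the subspace condition) runs aground because for a single point $x\in W\cap VH$ one cannot bound the factor $h$ in a decomposition $x=vh$ with $v\in V$, $h\in H$: the smallness of $x$ does not control its $H$-component. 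It is precisely the openness of $\pi$, deployed in the net form above, that supplies this missing control.
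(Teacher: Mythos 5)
Your net-theoretic proof is correct, but note that the paper itself offers no proof of Fact \ref{firMer}: this is Merson's Lemma, quoted from \cite[Lemma 7.2.3]{DPS89} and \cite[Lemma 4.4]{DM14}, and the argument in those sources is precisely the ``purely neighborhood-based attempt'' that your closing sentences declare hopeless. On the positive side, every step of your argument checks out: the reduction to exhibiting a single $\gamma$-convergent subnet is legitimate because the hypothesis $x_i\to_{\gamma_1}e$ is inherited by all subnets; the lifting of a convergent net through the $\gamma$-open, continuous, surjective projection $\pi\colon G\to G/H$ (indexed by pairs $(O,j)$) is the standard lifting property of open maps; and the factorization $x_{i_m}=z_m h_m$ with $z_m\to_{\gamma}e$ and $h_m=z_m^{-1}x_{i_m}\in H$, $h_m\to_{\gamma_1}e$, correctly feeds the subspace hypothesis and closes the argument. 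No Hausdorffness is used, as the statement requires.

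What is wrong is only your diagnosis that neighborhoods cannot control the $H$-component. They can; the trick you missed is to exploit $\gamma_1\subseteq\gamma$ by shrinking the $\gamma$-neighborhood with a $\gamma_1$-neighborhood \emph{before} projecting. Given a $\gamma$-neighborhood $U$ of $e$, pick a $\gamma$-neighborhood $U_1$ with $U_1U_1\subseteq U$; by $\gamma_1|_H=\gamma|_H$ pick a $\gamma_1$-neighborhood $W$ of $e$ in $G$ with $W\cap H\subseteq U_1$; pick a $\gamma_1$-neighborhood $W_1$ with $W_1^{-1}W_1\subseteq W$; and put $V:=U_1\cap W_1$, which is still a $\gamma$-neighborhood of $e$ since $\gamma_1\subseteq\gamma$. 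Openness of $\pi$ with respect to $\gamma$ together with $\gamma/H=\gamma_1/H$ makes $VH=\pi^{-1}(\pi(V))$ a $\gamma_1$-neighborhood of $e$, hence so is $O:=W_1\cap VH$. Now for $x\in O$, writing $x=vh$ with $v\in V$, $h\in H$ gives $h=v^{-1}x\in W_1^{-1}W_1\subseteq W$, so $h\in W\cap H\subseteq U_1$ and $x=vh\in U_1U_1\subseteq U$; thus $O\subseteq U$, proving $\gamma\subseteq\gamma_1$ and hence equality. This is in fact your own argument in neighborhood form: choosing the representative $v$ in $V\subseteq W_1$ makes it $\gamma_1$-small as well as $\gamma$-small, which is exactly the control your lift $z_m$ achieves via openness of $\pi$. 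The classical route buys brevity and dispenses with subnet bookkeeping; your version is a sound, if heavier, alternative.
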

As a corollary, 
 one has:
\begin{fact} \label{MER}\cite[Corollary 3.2]{DM10}
	A topological group $G$ is minimal if and only if it contains a subgroup $H$ which is both relatively minimal and co-minimal in $G$.
\end{fact}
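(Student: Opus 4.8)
The plan is to obtain this statement as a direct corollary of Merson's Lemma (Fact~\ref{firMer}), so that essentially no new work is required beyond unwinding the three definitions. Throughout I write $\gamma$ for the original topology of $G$.

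For the forward implication I would take $H=G$. Relative minimality of $G$ inside itself says precisely that every coarser group topology on $G$ induces the original topology on $G$, i.e.\ coincides with $\gamma$, and this is exactly the definition of minimality. Co-minimality is then automatic, since the coset set $G/G$ is a single point and carries only the trivial topology. (Equally well one could use $H=\{e\}$, for which co-minimality reduces to minimality while relative minimality is vacuous.) Either choice exhibits the required subgroup.

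The content lies entirely in the converse. Assume $H\leq G$ is both relatively minimal and co-minimal, and let $\gamma_1\subseteq\gamma$ be an arbitrary coarser group topology on $G$. Relative minimality yields $\gamma_1|_H=\gamma|_H$, while co-minimality yields $\gamma_1/H=\gamma/H$ as topologies on the coset set $G/H$. These are precisely the two hypotheses of Merson's Lemma, which then forces $\gamma_1=\gamma$. Since $\gamma_1$ was an arbitrary coarser group topology, $G$ admits no strictly coarser one and is therefore minimal.

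I do not anticipate a genuine obstacle here: the whole argument is a matching of definitions against the hypotheses of Fact~\ref{firMer}. The only point meriting attention is the bookkeeping for the induced topologies, namely that relative minimality controls the trace $\gamma_1|_H$ and co-minimality controls the quotient topology $\gamma_1/H$ on the coset set $G/H$ (which need not be a subgroup or a quotient group, as $H$ need not be normal). Because Merson's Lemma is stated with no separation hypothesis on $\gamma_1$, one need not even verify in advance that the coarser topology is Hausdorff, which further streamlines the application.
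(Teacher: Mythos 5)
Your proposal is correct and follows exactly the route the paper intends: Fact~\ref{MER} is presented there as an immediate corollary of Merson's Lemma (Fact~\ref{firMer}), and your argument—taking $H=G$ (or $H=\{e\}$) for necessity and feeding $\gamma_1|_H=\gamma|_H$ together with $\gamma_1/H=\gamma/H$ into Merson's Lemma for sufficiency—is precisely that derivation, carried out with the correct bookkeeping of the induced and coset topologies.
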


By $X \rtimes_{\pi} G,$ we mean 
the (topological) semidirect product of the (topological) groups $X,G$, where $\pi \colon G \times X \to X$ is a given (continuous) action by group automorphisms. We denote by $\F^{\times}$ the multiplicative group $\F\setminus\{0\}$. 
Given a semidirect product  $\F\rtimes_{\alpha} \F^{\times},$ we identify $\F$ with $\F\rtimes_{\alpha} \{1\} $ and $\F^{\times}$ with   $\{0\}\rtimes_{\alpha} \F^{\times}.$ 

If a topological group $G$ continuously acts on a topological group $X$ by group automorphisms, then $X$ is called a $G$-\textit{group}. Assuming that the $G$-group $X$ has no strictly coarser Hausdorff group topology such that the action of $G$ on $X$ remains continuous, then $X$ is  $G$-\textit{minimal}. 

\begin{fact} \cite[Proposition 4.4]{DM10} \label{f:rel-min=G-min}
	Let $(G,\sigma)$ be a topological group and $(X,\tau)$ be a $G$-group. The following are equivalent: \ben
	\item $X$ is $G$-minimal.
	\item $X$ is relatively minimal in the topological semidirect product $M:=(X \rtimes G, \gamma)$.
	\een
\end{fact}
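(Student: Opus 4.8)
The plan is to establish the two implications separately, exploiting in both directions the single structural fact that inside the topological semidirect product the action $\pi$ of $G$ on $X$ is realized by conjugation, together with the elementary observations that coarsening a product topology factorwise yields a coarser product topology and that continuity of a map is preserved when its domain is refined. Throughout I identify $X$ with $X\leftthreetimes\{1\}$ and $G$ with $\{1\}\leftthreetimes G$ inside $M=X\leftthreetimes_{\pi}G$, so that $\gamma|_X=\tau$ and $\gamma|_G=\sigma$, and I recall that $X$ is normal in $M$.

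For the implication $(1)\Rightarrow(2)$, I would start from a coarser Hausdorff group topology $\gamma_1\subseteq\gamma$ on $M$ and set $\tau_1:=\gamma_1|_X$. Being the restriction of a Hausdorff group topology to a subgroup, $\tau_1$ is itself a Hausdorff group topology on $X$, and $\tau_1\subseteq\tau$ since $\gamma_1\subseteq\gamma$ and $\gamma|_X=\tau$. The key step is to check that $(X,\tau_1)$ is still a $G$-group, i.e. that $\pi\colon(G,\sigma)\times(X,\tau_1)\to(X,\tau_1)$ remains continuous. For this I use that conjugation $(a,b)\mapsto aba^{-1}$ is continuous in the topological group $(M,\gamma_1)$; restricting $a$ to $\{1\}\leftthreetimes G$ and $b$ to the normal subgroup $X$ recovers exactly the action $\pi$ and exhibits it as a continuous map $(G,\gamma_1|_G)\times(X,\tau_1)\to(X,\tau_1)$. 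Since $\gamma_1|_G\subseteq\gamma|_G=\sigma$, refining the $G$-factor from $\gamma_1|_G$ to $\sigma$ preserves continuity. By $G$-minimality of $X$ this forces $\tau_1=\tau$, i.e. $\gamma_1|_X=\gamma|_X$, which is precisely relative minimality of $X$ in $M$.

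For the implication $(2)\Rightarrow(1)$, I would take an arbitrary coarser Hausdorff group topology $\tau_1\subseteq\tau$ on $X$ for which $\pi\colon(G,\sigma)\times(X,\tau_1)\to(X,\tau_1)$ stays continuous, and assemble from it the topological semidirect product $M_1:=(X,\tau_1)\leftthreetimes_{\pi}(G,\sigma)$, whose topology I call $\gamma_1$. The continuity of $\pi$ guarantees that $\gamma_1$ is indeed a Hausdorff group topology on the same abstract group underlying $M$. Since $\gamma_1$ and $\gamma$ are the semidirect-product topologies transported from $\tau_1\times\sigma$ and $\tau\times\sigma$ respectively, the inclusion $\tau_1\subseteq\tau$ gives $\gamma_1\subseteq\gamma$; thus $\gamma_1$ is a coarser group topology on $M$. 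Relative minimality of $X$ in $M$ now yields $\gamma_1|_X=\gamma|_X=\tau$, and since $\gamma_1|_X=\tau_1$ by construction, I conclude $\tau_1=\tau$. Hence $X$ is $G$-minimal.

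The only genuinely delicate point is the one in $(1)\Rightarrow(2)$: the definition of $G$-minimality insists that $G$ keep its original topology $\sigma$, whereas a coarsening of $M$ only controls $\gamma_1|_G$, which may be strictly coarser than $\sigma$. The resolution is the monotonicity of continuity in the domain, namely that a map continuous on $(G,\gamma_1|_G)\times(X,\tau_1)$ is a fortiori continuous on the finer $(G,\sigma)\times(X,\tau_1)$, so no control over $\gamma_1|_G$ beyond the inclusion $\gamma_1|_G\subseteq\sigma$ is required. Everything else reduces to the standard behaviour of subspace and product topologies under coarsening.
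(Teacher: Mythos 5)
Your proof is correct. Note that the paper itself offers no proof of this statement --- it is quoted as a Fact from \cite[Proposition 4.4]{DM10} --- and your argument is essentially the standard one from that reference: conjugation inside $(M,\gamma_1)$ realizes the action $\pi$ (giving $(1)\Rightarrow(2)$, with the correct observation that $\gamma_1|_G\subseteq\sigma$ is harmless since refining the domain preserves continuity), and transporting a coarser Hausdorff topology $\tau_1$ on $X$ to the semidirect-product topology $\tau_1\times\sigma$ on $M$ gives $(2)\Rightarrow(1)$.
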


%
%
%
%
%


\vskip 0.3cm 
\subsection{$\STP(n,\F)$ as a topological semidirect product}
Recall the following topological matrix group  
\begin{equation*}
\STP(n,\F):=\left\{\bar x= (x_{ij})| \ x_{ij} \in \F, x_{ij}=0 \ \forall i >j, \  \prod_{i=1}^n x_{ii}=1 \right \},
\end{equation*}   and its topological subgroups
\begin{equation*}
	\mathsf{N}:=\left \{\bar x| \  x_{ii}=1 \ \forall i \right \},
\end{equation*}  
\begin{equation*}
\mathsf{A}:=\left\{\bar x| \  x_{ij}=0 \ \forall i \neq j, \  \prod_{i=1}^n x_{ii}=1 \right\}.
\end{equation*}  
	\begin{lemma} \label{l:topSemProd} 
		$\STP(n,\F)$ is a topological semidirect product of the subgroups $\mathsf{N}$ and $\mathsf{A}$. That is,  $\STP(n,\F)\cong \mathsf{N} \rtimes_{\alpha} \mathsf{A}$, where $\alpha$ is the action by conjugations. 
	\end{lemma}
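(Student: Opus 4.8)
The plan is to verify the standard algebraic and topological criteria for an internal semidirect product, applied to the subgroups $\mathsf{N}$ and $\mathsf{A}$. The organizing tool is the \emph{diagonal map} $d \colon \SUT(n,\F) \to \mathsf{A}$ sending an upper triangular matrix $x=(x_{ij})$ to $\mathrm{diag}(x_{11},\dots,x_{nn})$. First I would observe that for upper triangular matrices the diagonal of a product is the entrywise product of the diagonals, so $d$ is a group homomorphism; it restricts to the identity on $\mathsf{A}$ (hence is surjective onto $\mathsf{A}$), and its kernel is exactly $\mathsf{N}$. This at once yields that $\mathsf{N}$ is normal in $\SUT(n,\F)$, that $\mathsf{A}$ is a subgroup, and that $\mathsf{N}\cap\mathsf{A}=\{I\}$, since a diagonal matrix all of whose diagonal entries equal $1$ is the identity.

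Next I would establish the factorization $\SUT(n,\F)=\mathsf{N}\mathsf{A}$: given $x$, set $a:=d(x)\in\mathsf{A}$ and $n:=xa^{-1}$; then $d(n)=d(x)d(a)^{-1}=I$, so $n\in\mathsf{N}$ and $x=na$. Uniqueness of the factorization follows from $\mathsf{N}\cap\mathsf{A}=\{I\}$. Together with the normality of $\mathsf{N}$, this shows that $\SUT(n,\F)$ is, algebraically, the internal semidirect product $\mathsf{N}\leftthreetimes_{\alpha}\mathsf{A}$, where $\alpha$ is the action by conjugation.

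To upgrade this to a \emph{topological} semidirect product, two continuity statements remain. For the action, a direct computation gives, for $\bar a=\mathrm{diag}(a_1,\dots,a_n)\in\mathsf{A}$ and $\bar b=(b_{ij})\in\mathsf{N}$, the identity $(\bar a\,\bar b\,\bar a^{-1})_{ij}=a_i b_{ij} a_j^{-1}$, which is jointly continuous in $(\bar a,\bar b)$ because multiplication and inversion are continuous in the topological field $\F$. For the matching of topologies, I would note that the multiplication map $\mathsf{N}\times\mathsf{A}\to\SUT(n,\F)$ is continuous, being polynomial in the matrix entries, while its inverse $x\mapsto (x\,d(x)^{-1},\,d(x))$ is also continuous: the map $d$ merely reads off diagonal entries, and $x\mapsto d(x)^{-1}$ is continuous since it only inverts nonzero diagonal entries. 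Hence the canonical bijection is a homeomorphism, which completes the proof.

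I do not anticipate a genuinely hard obstacle here; the only point deserving care is to keep the two continuity checks honest, namely that every inversion is taken at a nonzero field element, so that continuity of $x\mapsto x^{-1}$ on $\F^{\times}$ suffices and one never invokes continuity of inversion at $0$.
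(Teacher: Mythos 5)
Your proof is correct and follows essentially the same route as the paper: establish the internal algebraic semidirect product structure ($\mathsf{N}$ normal, $\mathsf{A}$ a complement, $\SUT(n,\F)=\mathsf{N}\mathsf{A}$ with trivial intersection), then check that the natural bijection $(\bar b,\bar a)\mapsto \bar b\,\bar a$ is a homeomorphism using the explicit entry formulas and continuity of field operations, with inversion taken only at nonzero diagonal entries. Your diagonal homomorphism $d$ is merely a tidy way of organizing the algebraic verifications that the paper dismisses as clear, and your inverse map $x\mapsto\bigl(x\,d(x)^{-1},\,d(x)\bigr)$ is exactly the paper's recovery of $(\bar b,\bar a)$ from the product.
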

	\begin{proof} 
	 Clearly, $G:=\STP(n,\F)=\mathsf{NA}$ and  $\mathsf{N}\cap \mathsf{A}$ is trivial. Moreover, 
$\mathsf{N}$ is a closed normal subgroup in $G$ and $\mathsf{A}$ is a closed subgroup in $G$. 
	So, algebraically, $G$ is isomorphic to the semidirect product $\mathsf{N}\rtimes_{\alpha} \mathsf{A},$ where $\alpha$ is the action by conjugations. The corresponding isomorphism is the map 
		$$i \colon \mathsf{N}\rtimes_{\alpha} \mathsf{A} \to \STP(n,\F) \ \ \  (\bar b,\bar a) \mapsto \bar b \cdot \bar a. 
	$$ 
	Observe that     $\bar c=\bar b \cdot \bar a$ satisfies  $c_{ij}=b_{ij}a_{jj}$ for every $i <j$ and $c_{ii}=a_{ii}$ for every $i.$
	
	Using the definition of the pointwise topology (and the fact that $\F$ is a topological field), it is easy to see that $i$ is a homeomorphism. 
\end{proof} 

\begin{remark} \label{r:standing} \
\begin{enumerate}
	\item Unless otherwise stated, below we assume that all fields are of characteristic distinct from $2$. 
	\item  By  $\widehat\F$ we always mean the completion of a locally retrobounded field $\F$ which always exists (Remark \ref{r:strict}(1)). If $\F$ is a subfield of a local field $P,$ then the 
	completion $\widehat\F$ can be identified with the closure of $\F$ in $P$. In case $\F$ is  infinite then  $\widehat\F$ is also a local field, as
	the local field $P$ contains no infinite discrete subfields (see \cite[p.  27]{MA}).
\end{enumerate}
\end{remark}

	\vskip 0.5cm  
	\section{Minimality of $\STP(n,\F)$} \label{s:3} 

	By Prodanov--Stoyanov's theorem,  if $\F$ is an infinite topological field, then neither $\mathsf{N}:=\UT(n,\F)$ nor $\TP(n,\F)$  are minimal as their centers are not precompact. 
In this section, we study the minimality of $\STP(n,\F).$  This case is also the key for further investigation.
	
	\subsection{Minimality of $\STP(2,\F)$} 
	\label{s:SUT} 
	
	Fixing $n=2$ in Lemma \ref{l:topSemProd}, we obtain the following subgroups of $\SL(2,\F)$ in a more explicit form:
	\begin{align*}
	\STP(2,\F) &=\left\{\left( \begin{array}{cc}
	a & b  \\
	0 & a^{-1}  
	\end{array}  \right) \middle|  \ a\in \F^{\times}, \  b\in \F \right\},\\
	\mathsf{A}&=\left\{\left( \begin{array}{cc}
	a & 0  \\
	0 & a^{-1}  
	\end{array}  \right) \middle|  \ a\in \F^{\times}\right\},\\
	\mathsf{N}&=\left\{\left( \begin{array}{cc}
	1 & b  \\
	0 & 1  
	\end{array}  \right) \middle|  \ b\in \F\right\}.
	\end{align*}

	\begin{lemma} \label{lem:vsq}
		The group $\STP(2,\F)$ is topologically isomorphic to the semidirect product $\F\rtimes_{\alpha} \F^{\times}$, where the action $\alpha: \F^{\times}\times \F \to \F$ is defined by $\alpha(a,b)=a^2b.$
	\end{lemma}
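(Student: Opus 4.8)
The plan is to build the claimed isomorphism directly on top of Lemma \ref{l:topSemProd}, which already presents $\SUT(2,\F)$ as the internal topological semidirect product $\mathsf{N} \leftthreetimes_{\alpha} \mathsf{A}$ for the conjugation action. First I would record the two coordinate identifications suggested by the explicit matrix forms displayed above: the assignment $b \mapsto \begin{pmatrix} 1 & b \\ 0 & 1 \end{pmatrix}$ is a group isomorphism from the additive group $(\F,+)$ onto $\mathsf{N}$, and $a \mapsto \begin{pmatrix} a & 0 \\ 0 & a^{-1} \end{pmatrix}$ is a group isomorphism from the multiplicative group $\F^{\times}$ onto $\mathsf{A}$. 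Each of these is a homeomorphism onto its image, since the pointwise topology on matrices restricts on these one-parameter families to the topology of $\F$ and of $\F^{\times}$, respectively.

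Next I would transport the conjugation action of $\mathsf{A}$ on $\mathsf{N}$ through these identifications and check that it becomes exactly $\alpha(a,b)=a^2 b$. Writing $\bar a = \begin{pmatrix} a & 0 \\ 0 & a^{-1} \end{pmatrix}$ and $\bar b = \begin{pmatrix} 1 & b \\ 0 & 1 \end{pmatrix}$, a short $2\times 2$ computation yields $\bar a\, \bar b\, \bar a^{-1} = \begin{pmatrix} 1 & a^2 b \\ 0 & 1 \end{pmatrix}$. Hence, under the identifications of the first step, conjugation by $a$ sends $b$ to $a^2 b$, which is precisely the prescribed action $\alpha$.

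Combining the two steps, the product map $(b,a)\mapsto \bar b\cdot\bar a$ supplied by Lemma \ref{l:topSemProd} becomes the desired topological isomorphism $\F \leftthreetimes_{\alpha} \F^{\times} \to \SUT(2,\F)$. I expect no substantive obstacle here: the entire content is the elementary conjugation computation producing the factor $a^2$, and the only point deserving an explicit word is the topological one, namely that the coordinate identifications with $\F$ and $\F^{\times}$ are homeomorphisms, which is immediate from the definition of the pointwise topology together with the fact that $\F$ is a topological field. The feature worth flagging is the appearance of $a^2$ rather than $a$, since it is exactly this squaring that will make the hypothesis $\mathrm{char}\,\F \neq 2$ relevant in the minimality arguments to follow.
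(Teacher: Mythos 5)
Your proposal is correct and follows essentially the same route as the paper: both invoke Lemma \ref{l:topSemProd} to present $\SUT(2,\F)$ as the internal topological semidirect product $\mathsf{N}\leftthreetimes \mathsf{A}$, identify $\mathsf{N}\cong(\F,+)$ and $\mathsf{A}\cong\F^{\times}$ topologically, and reduce everything to the conjugation computation $\bar a\,\bar b\,\bar a^{-1}=\left(\begin{smallmatrix}1 & a^{2}b\\ 0 & 1\end{smallmatrix}\right)$, which produces the action $\alpha(a,b)=a^{2}b$. The only cosmetic difference is that the paper writes out the composite map $(\bar b,\bar a)\mapsto\left(\begin{smallmatrix}a & b/a\\ 0 & a^{-1}\end{smallmatrix}\right)$ explicitly, while you phrase the same map as transport of structure through the coordinate identifications.
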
 
	\begin{proof}
		 As we already know $G$ is topologically isomorphic to $\mathsf{N}\rtimes_{\beta} \mathsf{A}$, where $\beta$ is the action by conjugations.
		Explicitly, we have the following topological group isomorphism:  	
		$$\mathsf{N}\rtimes_{\beta} \mathsf{A} \to \STP(2,\F) \ \ \  (\bar b,\bar a) \mapsto \left(\begin{array}{cc} 
		a & \frac{b}{a}  \\
		0 & a^{-1}
			\end{array} \right), 
		$$  where $\bar a=\left(\begin{array}{cc}
			a & 0  \\
			0 & a^{-1}  
		\end{array}\right)$ and $\bar b=\left( \begin{array}{cc}
		1 & b  \\
		0 & 1  
		\end{array}\right).$
		Since $$\left(\begin{array}{cc}
		a & 0  \\
		0 & a^{-1}  
		\end{array}\right) \left( \begin{array}{cc}
		1 & b  \\
		0 & 1  
		\end{array}\right)\left(\begin{array}{cc}
		a & 0  \\
		0 & a^{-1}  
		\end{array}\right)^{-1}=\left(\begin{array}{cc}
		1 & a^2b  \\
		0 & 1  
		\end{array}\right),$$ it follows that $\mathsf{N}\rtimes_{\beta} \mathsf{A} \cong \F\rtimes_{\alpha} \F^{\times} $ which completes the proof.
	\end{proof}

%

	\vskip 0.3cm

\begin{proposition}\label{prop:retcom}
$\STP(2,\F)$ is a minimal topological group for every non-discrete  locally retrobounded complete field $\F.$  
\end{proposition}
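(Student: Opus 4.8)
The plan is to reduce to the twisted affine group and then invoke Merson's Lemma. By Lemma \ref{lem:vsq} I may replace $\SUT(2,\F)$ by $G=\F\leftthreetimes_{\alpha}\F^{\times}$ with $\alpha(a,b)=a^{2}b$, writing $\mathsf{N}\cong(\F,+)$ for the normal factor and $\mathsf{A}\cong\F^{\times}$ for the complementary one. By Fact \ref{MER} it then suffices to exhibit a subgroup that is simultaneously relatively minimal and co-minimal in $G$, and the natural candidate is $H=\mathsf{N}$. A short computation in $G$ records the two identities I will use repeatedly: conjugation by $(0,a)$ sends $(b,1)\mapsto(a^{2}b,1)$, and the corresponding commutator is $(b,1)\mapsto\bigl((a^{2}-1)b,\,1\bigr)$.

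For relative minimality of $\mathsf{N}$, Fact \ref{f:rel-min=G-min} identifies the task with showing that $(\F,+)$ is $\F^{\times}$-minimal for the action $\alpha$: every coarser Hausdorff group topology $\tau'$ on $(\F,+)$ for which $(a,b)\mapsto a^{2}b$ remains continuous must coincide with the original topology $\tau$. I would deduce this from the strict minimality of $\F$ (Remark \ref{r:strict}(2)). The only change from the natural multiplication action treated in Remark \ref{r:strict}(3) is the passage through squaring, and this is harmless: since $\mathrm{char}\,\F\neq 2$, squaring is nondegenerate, and the squares of a fundamental system of neighborhoods of $0$ in $\F^{\times}$ remain coinitial, so the retrobounded estimates underlying strict minimality carry over; together with the automatically $\tau'$-continuous map $b\mapsto -b$ this forces $\tau'$ to be an $\F$-module topology, whence $\tau'=\tau$. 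I expect this step to go through for any non-discrete locally retrobounded $\F$, i.e. without using completeness.

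For co-minimality of $\mathsf{N}$, let $\gamma_{1}\subseteq\gamma$ be a coarser Hausdorff group topology on $G$; I must show that the induced quotient topology $\bar\gamma_{1}$ on $G/\mathsf{N}\cong\F^{\times}$ equals the original topology $\tau$. Here completeness enters. First, by the relative minimality just established $\gamma_{1}$ restricts to $\tau$ on $\mathsf{N}$, so $\mathsf{N}$ is $\gamma_{1}$-complete, hence $\gamma_{1}$-closed, and the quotient $(\F^{\times},\bar\gamma_{1})$ is a Hausdorff group. Next, the commutator map $(g,n)\mapsto[g,n]$ is $\gamma_{1}$-continuous and, because $\mathsf{N}$ is abelian, factors through $G/\mathsf{N}$ in the first variable; by the identity above this yields a continuous map $(\F^{\times},\bar\gamma_{1})\times(\F,\tau)\to(\F,\tau)$, $(a,b)\mapsto(a^{2}-1)b$. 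Evaluating at $b=1$ and adding the constant $1$ shows that squaring $(\F^{\times},\bar\gamma_{1})\to(\F^{\times},\tau)$ is continuous. Since $\F$ is complete and $\mathrm{char}\,\F\neq 2$, squaring is a local homeomorphism at each nonzero point, so preimages of $\tau$-neighborhoods of $a_{0}^{2}$ are $\bar\gamma_{1}$-neighborhoods of the two-point set $\{a_{0},-a_{0}\}$; intersecting with a $\bar\gamma_{1}$-neighborhood separating $a_{0}$ from $-a_{0}$ (available as $\bar\gamma_{1}$ is Hausdorff and $2a_{0}\neq 0$) recovers a $\tau$-neighborhood basis at $a_{0}$ inside $\bar\gamma_{1}$. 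Hence $\bar\gamma_{1}\supseteq\tau$, and therefore $\bar\gamma_{1}=\tau$.

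The main obstacle is precisely this co-minimality step, and it is where completeness is indispensable: it is needed both to guarantee that $\mathsf{N}$ is closed (so that the quotient is a Hausdorff group on which one may separate $a_{0}$ from $-a_{0}$) and to guarantee that squaring is a local homeomorphism, which is what turns the commutator-induced continuity of squaring into genuine control of the quotient topology. The conceptual subtlety throughout is that the structural action is through $a\mapsto a^{2}$ rather than $a\mapsto a$, so squaring is never injective and the image $(\F^{\times})^{2}$ may be a proper subgroup; the argument must therefore extract topological information from a two-to-one map, which is exactly why the characteristic $\neq 2$ hypothesis (ensuring $a_{0}\neq -a_{0}$) and completeness are both invoked.
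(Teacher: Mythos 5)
Your overall architecture is the same as the paper's: reduce via Lemma \ref{lem:vsq} to $G=\F\leftthreetimes_{\alpha}\F^{\times}$, and apply Fact \ref{MER} to the subgroup $\mathsf{N}\cong\F$, proving relative minimality via Fact \ref{f:rel-min=G-min} and co-minimality via the $\gamma_1$-closedness of $\F$ and the commutator identity $[(b,a),(1,1)]=(a^{2}-1,1)$. For the relative-minimality half your route differs from the paper's (which makes a direct retroboundedness estimate, choosing $Y$ with $4\notin Y-Y$ and exploiting $(1\pm\mu^{-1})^{2}\mu$): you appeal instead to strict minimality of $\F$ (Remark \ref{r:strict}(2)). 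This can be made to work, but not for the reason you give; the point is not that ``squares of neighborhoods remain coinitial'' but that in characteristic $\neq 2$ every scalar is a difference of two squares, $c=\left(\frac{c+1}{2}\right)^{2}-\left(\frac{c-1}{2}\right)^{2}$, so joint continuity of $(a,b)\mapsto a^{2}b$ on $\F^{\times}\times(\F,\tau')$ plus continuity of subtraction in $\tau'$ upgrades to joint continuity of $(c,b)\mapsto cb$ (with a separate translation argument at the scalars $c=\pm1$, where one factor of the decomposition vanishes and $\alpha$ is undefined). With that supplied, $(\F,\tau')$ is a topological $\F$-module and strict minimality finishes; and indeed completeness is not needed here, matching the paper.

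The genuine gap is in your co-minimality step. Everything up to the continuity of squaring $s\colon(\F^{\times},\bar\gamma_{1})\to(\F^{\times},\tau)$ is fine (and parallels the paper's use of the same commutator). But you then invoke the claim that squaring is a local homeomorphism at each nonzero point ``since $\F$ is complete and $\mathrm{char}\,\F\neq 2$.'' This is a Hensel-type property: it holds in $\R$, $\C$, $\Q_p$, and more generally in complete rank-one valued fields, but it is not a formal consequence of completeness plus local retroboundedness; for an abstract complete non-discrete locally retrobounded field (which is exactly the generality of the proposition) you would have to prove it, via the classification of such field topologies and a Newton--Hensel argument, and you neither prove nor cite it. The paper's proof is engineered precisely to avoid any such claim: instead of inverting the square map, it writes $a-1=(a+1)^{-1}(a^{2}-1)$, keeps $a+1$ away from a neighborhood $M$ of $0$ using Hausdorffness of the quotient and $\mathrm{char}\,\F\neq 2$, and then uses retroboundedness (boundedness of $(\F\setminus M)^{-1}$) together with relative minimality (to realize a small set $B$ as $V_{2}\cap\F$) to get $a-1\in(\F\setminus M)^{-1}B\subseteq O$; no openness of squaring is ever needed. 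There is also a secondary flaw in your endgame even granting the local-homeomorphism claim: $s^{-1}(s(U_{0}))=U_{0}\cup(-U_{0})$, and intersecting with a $\bar\gamma_{1}$-neighborhood $V\ni a_{0}$ with $-a_{0}\notin V$ only excludes the single point $-a_{0}$, not the rest of $-U_{0}$, so the resulting $\bar\gamma_{1}$-neighborhood need not be contained in $U_{0}$. That flaw is repairable (e.g.\ apply Merson's Lemma, Fact \ref{firMer}, to the finite subgroup $\{\pm1\}$ of $(\F^{\times},\tau)$, whose quotient topology is controlled by $s$), but the repair still rests on the unproved openness of squaring.
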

	\begin{proof} 
			
		By Lemma \ref{lem:vsq}, it is equivalent to prove that  
		$\F\rtimes_{\alpha} \F^{\times}$ is minimal, where the action $\alpha \colon  \F^{\times}\times \F \to \F$ is defined by $\alpha(a,b)=a^2b.$
	We will show that the subgroup $\F$ is both  relatively minimal and co-minimal in $\F\rtimes_{\alpha} \F^{\times}.$
	This will prove the minimality of the latter by  Fact \ref{MER}.
	Denote by $\tau$ and $\tau^{\times}$ the given topologies on $\F$ and $\F^{\times},$ respectively, and let $\gamma$ be the product of these topologies on $\F\rtimes_{\alpha} \F^{\times}$.\\
	
	\vskip 0.2cm 
\noindent	{\bf Relative minimality} of $\F$

	To establish the relative minimality of $\F$  in $\F\rtimes_{\alpha} \F^{\times},$ it is equivalent to show by Fact \ref{f:rel-min=G-min}   that $\F$ is \textit{$\F^{\times}$-minimal}.
	For this purpose, let $\sigma\subseteq \tau$ be a coarser Hausdorff group topology on $\F$ such that 
	\begin{equation} \label{eq:alpha} 
	\alpha \colon \F^{\times}\times (\F,\sigma)\to (\F,\sigma) \ \ \ \alpha(a,b)=a^2b  
	\end{equation}
	remains continuous. We have to show that  $\sigma=\tau$.
	
	 Let $U$ be an arbitrary $\tau$-neighborhood of $0$. We will show that $U$ is a $\sigma$-neighborhood of $0$ and thus $\sigma=\tau.$   Since $\sigma$ is a Hausdorff group topology and the field $\F$ has characteristic distinct from $2$ (Remark \ref{r:standing}),   there exists a $\sigma$-neighborhood  $Y$ of $0$ such that $4\notin Y-Y.$ By the continuity of $\alpha$ and since $\F^{\times}$ is open in $\F,$  there exist a symmetric $\tau$-neighborhood $V$ of $0$ and  a $\sigma$-neighborhood $W$ of $0$ such that  
	 \begin{equation}\label{eq:fir}  \alpha((1+V)\times W)\subseteq Y.
	\end{equation}
	Since $\F$ is locally retrobounded, $U$ is retrobounded. That is, $(\F\setminus U)^{-1}$ is bounded in $(\F,\tau)$. So, there exists a $\tau$-neighborhood $M_1$ of zero in $\F$ such that $(\F\setminus U)^{-1} M_1 \subseteq V.$ Choose another $\tau$-neighborhood $M_2$ of zero such that $M_2 M_2 \subseteq M_1$. 
	Since $\F$ is not discrete, $M_2$ contains a nonzero element $\lambda$.  It follows that $$(\F\setminus U)^{-1}\lambda^2\subseteq V.$$

	By the continuity of $\alpha$ (see (\ref{eq:alpha})), we obtain that $\lambda^2W$ is a $\sigma$-neighborhood of $0.$ We claim that $\lambda^2W\subseteq U$ 
	(this will imply that $U$ is a $\sigma$-neighborhood of $0$ and  $\sigma=\tau$).
	Assume by contradiction that there exists $\mu\in W$ such that $\lambda^2\mu\notin  U.$ Then  $$\mu^{-1}=(\mu^{-1}\lambda^{-2})\lambda^2\in (\F\setminus U)^{-1}\lambda^2\subseteq V.$$ By (\ref{eq:fir}), we have $$\alpha(1+\mu^{-1},\mu)-\alpha(1-\mu^{-1},\mu)=(1+\mu^{-1})^2\mu-(1-\mu^{-1})^2\mu=4\in Y-Y,$$ a contradiction. 

	\vskip 0.3cm 
\noindent	{\bf Co-minimality} of $\F$
		
	
	Next, we prove that $\F$ is co-minimal in $(\F\rtimes_{\alpha} \F^{\times},\gamma).$ Let $\mu\subseteq \gamma$  be a coarser Hausdorff group topology on $\F\rtimes_{\alpha} \F^{\times}.$ 
We have to show that the coset topology $\mu / \F$ on $\F^{\times}$ is just the original topology $\tau^{\times}$ (which is equal to $\gamma / \F$). 
	It is equivalent to show that  the projection  $q\colon  (\F\rtimes_{\alpha} \F^{\times}, \mu) \to (\F^{\times}, \tau^{\times})$ is continuous. Since  $\gamma / \F=\tau^{\times},$ this will imply  that 
	$\gamma / \F= \mu/ \F,$ establishing the  co-minimality of $\F.$
	It suffices to show that the homomorphism $q$ is continuous at the identity $(0, 1).$ Let $U$ be a $\tau^{\times}$-neighborhood of $1.$ We will find  a $\mu$-neighborhood $V$ of
	$(0,1)$ such that $q(V) \subseteq U.$ Since $\F^{\times}$ is open in $(\F,\tau),$ it follows that there exists a $\tau$-neighborhood $O$ of $0$ such that $1+O\subseteq U.$ Being complete and relatively minimal in $\F\rtimes_{\alpha} \F^{\times},$ the subgroup $\F$ is also $\mu$-closed. Hence, the group topology $\mu/ \F$ is Hausdorff. Taking into account also the fact that $\mathrm{char}(\F)\neq 2,$ we find  $\mu/ \F$-neighborhoods $W_1,W_2$ of $1, -1,$ respectively, which are disjoint. Without loss of generality, there exists a $\mu$-neighborhood  $V_1$ of $(0,1)$ such that $q(V_1)=W_1.$ Using the fact that $\F^{\times}$ is open in $(\F,\tau)$ and since $\mu/ \F\subseteq \gamma/\F=\tau^{\times},$  we obtain that  $M=1+W_2$ is a $\tau$-neighborhood of $0.$     The definitions of $V_1$ and $M$ together with the fact that $W_1\cap W_2=\emptyset$ imply that
	\begin{equation}\label{eq:sec} q(V_1)+1\subseteq \F\setminus M.  \end{equation} 
	Since $\F$ is locally retrobounded, $(\F\setminus M)^{-1}$ is bounded. So, there exists a $\tau$-neighborhood $B$ of $0$ such that \begin{equation}\label{eq:third}(\F\setminus M)^{-1}B \subseteq O.\end{equation}  By the relative minimality of $\F,$ there exists  a  $\mu$-neighborhood $V_2$  of $(0,1)$ such that $V_2\cap \F=B.$ Since $\mu$ is a group topology, there exists a   $\mu$-neighborhood $V_3$   of $(0,1)$ such that the commutator $[(b,a),(1,1)]\in V_2$ for every $(b,a)\in V_3.$ Computing this commutator, we obtain  \begin{equation}\label{eq:four}
	[(b,a),(1,1)]= (b,a)(1,1)(b,a)^{-1}(1,1)^{-1}=(a^2-1,1)\in V_2\cap \F=B.
	\end{equation} 
	
	 Now we show that $q(V)\subseteq U$ for $V=V_1\cap V_3,$ which is a $\mu$-neighborhood of $(0,1).$ Fix an arbitrary $(b,a)\in V.$ By (\ref{eq:sec}) and since $V\subseteq V_1,$ we obtain 
	 $$(a+1)^{-1}\in (q(V)+1)^{-1}\subseteq (\F\setminus M)^{-1}.$$ Moreover, $V$ is also a subset of $V_3.$ So, (\ref{eq:four}) implies that $a^2-1\in B.$  Using (\ref{eq:third}), we now have
	  $$q(b,a)-1=a-1=(a+1)^{-1}(a^2-1)\in (\F\setminus M)^{-1}B \subseteq O.$$ 
	  Finally, we get $q(b,a)=1+(q(b,a)-1) \in 1 +O \subseteq U,$ as needed. 
	 
	  Now we can conclude that the topological group $\F\rtimes_{\alpha} \F^{\times}$ is minimal.\
\end{proof}

\vskip 0.3cm 
Let $H$ be a subgroup of a topological group $G$. Recall that $H$ is  {\it essential}  in $G$ if $H\cap L \neq \{e\}$ 
for every non-trivial closed normal subgroup $L$ of $G$. 
The following minimality criterion of dense subgroups is well-known (for compact $G$ see also \cite{P,S71}). 

\begin{fact} \label{Crit} \cite[Minimality Criterion]{B}  
	Let $H$ be a dense subgroup of a topological group $G.$  Then $H$ is minimal if and only if $G$ is minimal and $H$ is essential in $G$. 
\end{fact}


The following theorem deals with the minimality of $\STP(n,\F)$ only for $n=2$ in case $\F$ is a non-discrete locally retrobounded field. 
 However, if $\F$ is  a local field, then $\STP(n,\F)$ is minimal for every $n\in \N$ (see  Theorem \ref{thm:sutmin} below).

	\begin{theorem} \label{t1}
$\STP(2,\F)$ is minimal for every  non-discrete locally retrobounded field $\F.$ 
\end{theorem}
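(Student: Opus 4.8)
The plan is to reduce the general (possibly incomplete) locally retrobounded case to the complete case already settled in Proposition \ref{prop:retcom}, via a density and essentiality argument using Fact \ref{Crit}. By Lemma \ref{lem:vsq}, proving the theorem is equivalent to showing that $G:=\F\leftthreetimes_{\alpha} \F^{\times}$ is minimal, where $\alpha(a,b)=a^2b$. Let $\widehat{\F}$ denote the completion of $\F$, which by Remark \ref{r:strict}(1) is again a non-discrete locally retrobounded field (and is complete). I would first observe that $\widehat{G}:=\widehat{\F}\leftthreetimes_{\alpha}\widehat{\F}^{\times}$ is a topological group into which $G$ embeds as a dense subgroup; this is where I expect the first technical point to lie, namely verifying that the multiplicative group $\F^{\times}$ is dense in $\widehat{\F}^{\times}$ (equivalently, that the closure of $\F$ in $\widehat{\F}$ meets $\widehat{\F}^{\times}$ densely) so that $G$ is genuinely dense in $\widehat{G}$.

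Next I would invoke Proposition \ref{prop:retcom}: since $\widehat{\F}$ is non-discrete, locally retrobounded and complete, the group $\widehat{G}=\widehat{\F}\leftthreetimes_{\alpha}\widehat{\F}^{\times}$ is minimal. By Fact \ref{Crit}, a dense subgroup of a minimal group is minimal precisely when it is essential. So it remains to prove that $G$ is essential in $\widehat{G}$; that is, that $G\cap L\neq\{e\}$ for every non-trivial closed normal subgroup $L$ of $\widehat{G}$. I expect \textbf{this essentiality verification to be the main obstacle}, and I would handle it by analysing the normal subgroup structure of $\widehat{G}$. The additive part $\widehat{\F}$ (identified with $\widehat{\F}\leftthreetimes\{1\}$) is a closed normal subgroup, and the action of $\widehat{\F}^{\times}$ on it by $a\cdot b=a^2 b$ is such that any non-trivial $\widehat{\F}^{\times}$-invariant additive subgroup is essentially large; the key claim is that every non-trivial closed normal subgroup $L$ of $\widehat{G}$ must intersect the additive copy of $\widehat{\F}$ non-trivially.

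To establish that claim I would argue as follows: take $e\neq (b,a)\in L$. If $a=1$ then $(b,1)\in L\cap\widehat{\F}$ with $b\neq 0$, and since $\F$ is dense in $\widehat{\F}$ one can then use normality and the group operations to produce a non-trivial element of $L$ lying in the copy of $\F$ (for instance by translating $b$ into $\F$ using a suitable conjugation, exploiting that the action preserves the additive subgroup). If $a\neq 1$, then computing the commutator $[(b,a),(0,c)]$ or $[(b,a),(b',1)]$ for appropriate $c\in\widehat{\F}^{\times}$ or $b'\in\widehat{\F}$ produces, by normality of $L$, an element of $L$ of the form $((a^2-1)b',1)$ lying in the additive part; choosing $b'$ so that $(a^2-1)b'$ is a nonzero element of $\F$ (possible since $a^2\neq 1$ as $\mathrm{char}(\F)\neq 2$, and $\F$ is dense and non-discrete in $\widehat{\F}$) yields a non-trivial element of $L\cap\F$. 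In either case $G\cap L\neq\{e\}$, so $G$ is essential in $\widehat{G}$. Combining essentiality with the minimality of $\widehat{G}$ and Fact \ref{Crit} gives that $G\cong\SUT(2,\F)$ is minimal, completing the proof.
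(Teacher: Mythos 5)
Your strategy is exactly that of the paper's proof: write $G:=\F\leftthreetimes_{\alpha}\F^{\times}$ (Lemma \ref{lem:vsq}), embed it densely in $\widehat G:=\widehat\F\leftthreetimes_{\alpha}(\widehat\F)^{\times}$, which is minimal by Proposition \ref{prop:retcom} and Remark \ref{r:strict}(1), and verify essentiality so that Fact \ref{Crit} applies. (The density point you flag is immediate: $\F^{\times}=\F\setminus\{0\}$ is dense in the open subset $\widehat\F\setminus\{0\}$, and the semidirect product carries the product topology.) The genuine gaps are all inside the essentiality verification. First, your key claim --- that every non-trivial closed normal subgroup of $\widehat G$ meets the additive copy $\widehat\F\leftthreetimes\{1\}$ non-trivially --- is false: since the action is by squares, $(0,-1)$ is central, so $L=\{(0,1),(0,-1)\}$ is a non-trivial closed normal subgroup meeting the additive part trivially. (Essentiality still holds for this $L$ because $(0,-1)\in G$; the paper handles such elements by noting that any $(m,n)$ with $m\in\{0,\pm1\}$ and $n\in\{\pm1\}$ already lies in $G$.) Second, your dichotomy breaks at $a=-1$: the justification ``$a^2\neq 1$ as $\mathrm{char}(\F)\neq 2$'' is wrong there (characteristic $\neq 2$ gives $1\neq -1$, not $a\neq -1$), and for $a=-1$ your commutator $((a^2-1)b',1)$ is trivial for every choice of $b'$, so that branch produces nothing. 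The repair is the paper's squaring trick: $(b,-1)^2=(2b,1)$ is non-trivial when $b\neq 0$ and reduces you to the additive case, while for $b=0$ the element $(0,-1)$ itself lies in $G$.

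Third, in the additive case $(b,1)\in L$ with $b\neq 0$ (and $b$ possibly outside $\F$), the one concrete mechanism you offer --- a single conjugation --- gives only $(0,c)(b,1)(0,c)^{-1}=(c^2b,1)$, i.e.\ it multiplies $b$ by squares, and nothing in the hypotheses guarantees that the coset $b\cdot((\widehat\F)^{\times})^2$ meets $\F^{\times}$; that would require, for instance, the squares of $\widehat\F$ to have non-empty interior, which you neither prove nor may assume for a general complete locally retrobounded field. The argument that works, unconditionally and purely algebraically, is the paper's: by normality, $(0,c_1)(b,1)(0,c_1)^{-1}(0,c_2)(b,1)^{-1}(0,c_2)^{-1}=((c_1^2-c_2^2)b,1)\in L$ for all $c_1,c_2\in(\widehat\F)^{\times}$, and since in characteristic $\neq 2$ every element is a difference of two squares, choosing $c_1=2^{-1}(1+b^{-1})$ and $c_2=c_1-1$ gives $c_1^2-c_2^2=b^{-1}$, hence $(1,1)\in L\cap G$ (this requires $b\notin\{0,\pm1\}$; if $b=\pm 1$ then $(b,1)\in G$ already). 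With these three repairs your argument coincides with the paper's proof.
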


\begin{proof}
The completion  $\widehat\F$ of a locally retrobounded field $\F$ is a locally retrobounded field (Remark \ref{r:strict}(1)).
According to Lemma \ref{lem:vsq}, $\STP(2,\widehat{\F})$ is isomorphic to $\widehat{\F}\rtimes_{\alpha} (\widehat{\F})^{\times}$, where the action $\alpha \colon  (\widehat{\F})^{\times}\times \widehat{\F} \to \widehat{\F}$ is defined by $\alpha(a,b)=a^2b.$ Clearly, 
$\STP(2,\widehat{\F})$ contains $\STP(2,\F)\cong \F\rtimes_{\alpha} \F^{\times}$ as a dense subgroup.
  By Proposition \ref{prop:retcom}, $\widehat{\F}\rtimes_{\alpha} (\widehat{\F})^{\times}$ is minimal. To establish the minimality of $\STP(2,\widehat{\F})$ it is sufficient to prove, in view of Fact \ref{Crit}, that the subgroup  $\F\rtimes_{\alpha} \F^{\times}$ is essential in $\widehat{\F}\rtimes_{\alpha} (\widehat{\F})^{\times}.$ 	
  
  Let $L$ be a closed non-trivial normal subgroup of $\widehat{\F}\rtimes_{\alpha} (\widehat{\F})^{\times}$. We have to show that $L\cap (\F\rtimes_{\alpha} \F^{\times})$ is non-trivial. Let $(m,n)$ be a non-trivial element of $L$. If  $n\neq \pm 1$, then $1-n^2\neq 0$. Letting $a=(1-n^2)^{-1}$ and computing the commutator 
  $[(a,1),(m,n)],$ we obtain \[[(a,1),(m,n)]=(1,1)\in L\cap (\F\rtimes_{\alpha} \F^{\times}). \] Now assume that $n\in \{1,-1\}$. Since $(m,n)$ is non-trivial and   \[\{(0,-1),(-1,1),(-1,-1),(1,1),(1,-1)\}\subseteq \F\rtimes_{\alpha} \F^{\times},\] we may assume that $m\notin \{0,-1,1\}$. Moreover, without loss of generality, $n=1$. Indeed,  this follows from the fact that $(m,n)^{2}=(2m,1)$.   So, $(m,n)=(m,1)\in L,$ where $m\notin \{0,-1,1\}$. For every $a,b\in \F^{\times},$ we have \[(0,a)(m,1)(0,a)^{-1}(0,b)(m,1)^{-1}(0,b)^{-1}=((a^2-b^2)m,1)\in L,\] as $L$ is normal in $G$. In particular, letting $a=2^{-1} (1+m^{-1})$ and $b=a-1,$ we conclude that  \[((a^2-b^2)m,1)= ((a-b)(a+b)m,1)=(1,1)\in L\cap (\F\rtimes_{\alpha} \F^{\times}).\] This proves that  $\STP(2,\F)$ is essential in $\STP(2,\widehat{\F})$.
\end{proof}


\vskip 0.2cm 
Theorem \ref{t1} is not true for an arbitrary $n$. Indeed, 
in Example \ref{ex:nmin} below we prove
that $\STP(n,\Q(i))$ is not minimal in case $n$ is not a power of $2$. 

\begin{example}\label{ex:nmin}
Let $n$ be a natural number that is not a power of $2$. Then the  group $\STP(n,\Q(i))$ is not minimal.
	Indeed, by our assumption on $n,$ there exists an odd prime  $p$ that divides $n$. We claim that 
	the finite (hence, closed)  central subgroup $L=\{\lambda I:\lambda^p=1\}$ of $\STP(n,\C)$ trivially intersects $\STP(n,\Q(i))$.
To see this, observe that by Lemma \ref{lem:rinqi} if $\lambda^p=1$ and $\lambda\in \Q(i)$, then $\lambda=1.$
	This means that $\STP(n,\Q(i))$ is not essential in $\STP(n,\C)$. By the Minimality Criterion (Fact~\ref{Crit}), $\STP(n,\Q(i))$ is not minimal.
\end{example}

In view of Theorem \ref{t1} and Example \ref{ex:nmin}, the following natural questions arise:
\begin{question}
Let $k\in \N$ and $\F$ be a non-discrete locally retrobounded  field.  
Is $\STP(2^k,\F)$ minimal?
What if, in addition, $\F$ is complete?
\end{question}

\vskip 0.3cm 
\subsection{Minimality of $\STP(n,\F)$ and $\STP(n,\F)/Z(\SL(n,\F))$}  

Let $\F$ be a topological field. 
Recall that by Lemma \ref{l:topSemProd}, $\STP(n,\F)\cong \mathsf{N}\rtimes_{\alpha} \mathsf{A}$, where $\mathsf{N}=\UT(n,\F)$, 
  $\mathsf{A}$ is the group of diagonal matrices with determinant $1$  and $\alpha$ 
is the action 
by conjugations. In the sequel, we sometimes identify $\STP(n,\F)$ with $\mathsf{N}\rtimes_{\alpha} \mathsf{A}$. 

For $1\leq i<j\leq n,$ let $G_{i,j}$ be the 1-parameter subgroup of $\mathsf{N}$ such that for every matrix $X\in G_{i,j}$ we have $p_{k,l}(X)=x_{k,l}=0$ if $k\neq l$ and $(k,l)\neq (i,j),$ 
where $p_{k,l}: \GL(n,\F) \to \F, \ p_{k,l}(X)=x_{k,l}$ is the canonical coordinate projection. 

Denote by $\H(n,\F)$  the $2n+1$-dimensional Heisenberg group over a field $\F$.
 More precisely, define $\H(n,\F)$ as the following subgroup of $\UT(n+2,\F)$ 
$$ \H(n,\F):=
\Bigg\{\left( \begin{array}{ccc}
1 & a & b \\
0 & I_n & c \\
0 & 0 & 1 
\end{array}\right)\bigg |  \ a,c\in \F^n, b\in \F \Bigg\}, 
$$
where $I_n$ is the identity matrix of size $n$.
As a corollary of \cite[Proposition 2.9]{MEG95} we have the following.
\begin{corollary}\label{cor:rmH}
	Let 
 $G$ be a topological subgroup of $\GL(n+2,\F)$ containing $\H(n,\F)$.
	If the corner $1$-parameter subgroup $G_{1,n+2}$ of $\H(n,\F)$ is relatively minimal in $G$, then $\H(n,\F)$ is relatively minimal in $G.$
\end{corollary}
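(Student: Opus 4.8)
The plan is to deduce the statement from Merson's Lemma (Fact \ref{firMer}) applied \emph{inside} $M:=\H(n,\F)$ itself, the role of the ambient group $G$ being only to supply, through the hypothesis, that the center keeps its topology. Write an element of $M$ as a triple $(a,b,c)$ with $a,c\in\F^n$ and $b\in\F$, so that the group law reads $(a,b,c)(a',b',c')=(a+a',\,b+b'+a\cdot c',\,c+c')$, the center is precisely $Z:=G_{1,n+2}=\{(0,b,0):b\in\F\}\cong\F$, and a direct computation gives the central commutator $[(a,b,c),(a',b',c')]=(0,\,a\cdot c'-a'\cdot c,\,0)$. Under the pointwise topology $\tau$, the map $(a,b,c)\mapsto(a,c)$ identifies $M/Z$ with $\F^{2n}$ carrying the product topology.

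First I would unwind the definitions. Let $\gamma$ be the topology on $G$, let $\gamma_1\subseteq\gamma$ be an arbitrary coarser Hausdorff group topology, and set $\sigma:=\gamma_1|_M$ and $\tau:=\gamma|_M$, so that $\sigma\subseteq\tau$ are group topologies on $M$ and it suffices to prove $\sigma=\tau$. The hypothesis that the corner subgroup $G_{1,n+2}=Z$ is relatively minimal in $G$ gives exactly $\gamma_1|_Z=\gamma|_Z$, that is, $\sigma|_Z=\tau|_Z$. In view of Fact \ref{firMer} (applied to $M$ with $H=Z$), it then remains only to check the equality of quotient topologies $\sigma/Z=\tau/Z$. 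Since $\sigma\subseteq\tau$ already yields $\sigma/Z\subseteq\tau/Z$, the whole problem reduces to the single inclusion $\tau/Z\subseteq\sigma/Z$.

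For this I would exploit the commutator formula. Fix the elements $t_j:=(0,0,e_j)$ and $s_j:=(e_j,0,0)$ of $M$, where $e_1,\dots,e_n$ is the standard basis of $\F^n$, and consider $\phi_j,\psi_j\colon M\to Z$ given by $\phi_j(g)=[g,t_j]$ and $\psi_j(g)=[g,s_j]$. By the formula above, $\phi_j(a,b,c)=(0,a_j,0)$ and $\psi_j(a,b,c)=(0,-c_j,0)$, and each is $\sigma$-continuous because $\sigma$ is a group topology. As $Z$ is central, these maps are constant on $Z$-cosets, hence descend to $\sigma/Z$-continuous maps $M/Z\to Z\cong\F$ which, under the identification $M/Z\cong\F^{2n}$, are precisely (up to sign) the $2n$ coordinate projections. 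Since $\sigma|_Z=\tau|_Z$, these projections take values in $(\F,\tau_\F)$; and since the product topology $\tau/Z$ on $\F^{2n}$ is the coarsest topology rendering all coordinate projections continuous, it follows that $\tau/Z\subseteq\sigma/Z$. Thus $\sigma/Z=\tau/Z$, and Fact \ref{firMer} delivers $\sigma=\tau$, i.e. the relative minimality of $\H(n,\F)$ in $G$.

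I expect the one genuinely delicate point to be the reduction to the quotient. It is tempting to note only that $\sigma$ and $\tau$ agree on each one-parameter subgroup $G_{1,1+j}$ and $G_{1+j,n+2}$ (which the maps $\phi_j,\psi_j$ do establish) and then conclude coordinatewise; but this is \emph{false} in general, since already $\R^2$ carries strictly coarser group topologies that restrict to the usual one on each coordinate axis. What rescues the argument is that the pairing $(a,c)\mapsto a\cdot c$ is non-degenerate, so that commutators with the \emph{fixed} vectors $t_j,s_j$ recover \emph{all} coordinates of $M/Z$ as $\sigma$-continuous functions landing in the center, whose topology is frozen by the hypothesis. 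This is exactly the mechanism packaged in \cite[Proposition 2.9]{MEG95}; alternatively, one could simply verify that $\H(n,\F)$ together with this pairing meets the hypotheses of that proposition and quote it directly.
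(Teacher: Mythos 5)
Your proof is correct. Note, however, that the paper itself contains no argument for Corollary \ref{cor:rmH}: it is stated as an immediate consequence of \cite[Proposition 2.9]{MEG95}, a general result on Heisenberg groups built from separated (non-degenerate) biadditive pairings. What you have done is reprove that mechanism, self-contained, in the concrete matrix setting: the hypothesis freezes the topology on the center $Z=G_{1,n+2}$; the commutators with the fixed elements $t_j,s_j$ exhibit all $2n$ coordinate functions of $M/Z\cong\F^{2n}$ as $\sigma/Z$-continuous maps into $Z$ (non-degeneracy of the pairing $(a,c)\mapsto a\cdot c$ is exactly what makes this work); since the product topology is the initial topology of the coordinate projections, this forces $\tau/Z\subseteq\sigma/Z$, and Merson's Lemma (Fact \ref{firMer}) closes the argument. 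Your reduction to working inside $M=\H(n,\F)$ alone is legitimate, since the ambient group $G$ enters only through the hypothesis $\gamma_1|_Z=\gamma|_Z$, and your caveat that agreement of $\sigma$ and $\tau$ on each one-parameter subgroup would not suffice coordinatewise is well taken. The trade-off between the two routes: the paper's citation is shorter and situates the statement in a general framework (arbitrary separated biadditive mappings and generalized Heisenberg groups), which the authors reuse elsewhere; your argument costs a page but is verifiable entirely within the paper and makes transparent exactly where the hypothesis on the corner subgroup and the non-degeneracy of the form are used.
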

The proof of the following proposition heavily relies on the algebraic structure of the  matrix groups involved.
\begin{proposition}\label{prop:rmncomp} 
	Let $\F$ be a  non-discrete locally retrobounded complete field. 
	Then the subgroup $\mathsf{N}=\UT(n,\F)$ is relatively minimal
	in $\STP(n,\F).$	
\end{proposition}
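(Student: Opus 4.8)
The plan is to prove that $\mathsf{N}=\UI(n,\F)$ is relatively minimal in $\SUT(n,\F)\cong\mathsf{N}\leftthreetimes_\alpha\mathsf{A}$ by induction on $n$, using the $G$-minimality reformulation of relative minimality (Fact \ref{f:rel-min=G-min}). Equivalently, I must show that any coarser Hausdorff group topology $\sigma\subseteq\tau$ on $\mathsf{N}$ for which the conjugation action of $\mathsf{A}$ (and indeed of all of $\SUT(n,\F)$) remains continuous must coincide with $\tau$. The base case $n=2$ is essentially the relative minimality of $\F$ in $\F\leftthreetimes_\alpha\F^\times$ already established inside Proposition \ref{prop:retcom}, so the work is in the inductive step.

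The key structural input is the decomposition of $\mathsf{N}=\UI(n,\F)$ into its one-parameter coordinate subgroups $G_{i,j}$. First I would isolate the corner subgroup $G_{1,n}$ together with the top row and last column to exhibit a copy of the Heisenberg group $\H(n-2,\F)$ inside $\mathsf{N}$, so that Corollary \ref{cor:rmH} becomes applicable: it reduces relative minimality of the whole Heisenberg block to relative minimality of the single corner subgroup $G_{1,n}$. Thus the central task shrinks to proving that each relevant one-parameter subgroup $G_{i,j}$ is relatively minimal, and then propagating minimality across all the $G_{i,j}$ by a Merson-type argument (Fact \ref{firMer} / Fact \ref{MER}). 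For an individual $G_{i,j}$ the mechanism should mirror the $n=2$ computation: conjugating a generator of $G_{i,j}$ by a suitable diagonal element of $\mathsf{A}$ rescales the entry $x_{ij}$ by a ratio $a_{ii}a_{jj}^{-1}$ of diagonal entries, and by choosing these entries freely in $\F^\times$ one recovers the full scaling action $b\mapsto \lambda b$ needed to run the locally-retrobounded argument (retroboundedness of a $\tau$-neighborhood $U$ yields a small $\lambda$ with $\lambda^{\pm}$ controlling $(\F\setminus U)^{-1}$, forcing a $\sigma$-neighborhood into $U$).

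The main obstacle, and the step I expect to absorb most of the care, is the passage from relative minimality of the individual generators $G_{i,j}$ to relative minimality of their product $\mathsf{N}$. The subgroups $G_{i,j}$ do not commute — their commutators produce entries further from the diagonal (the Heisenberg-type relations) — so one cannot simply take a product topology. The clean route is to stratify $\mathsf{N}$ by the descending central series (equivalently, by the superdiagonal index $j-i$): the subgroups with $j-i$ large sit in the center, and one peels them off one layer at a time, at each stage using Fact \ref{firMer} with $H$ a suitable one-parameter (or lower-central) subgroup that is simultaneously relatively minimal (giving $\sigma|_H=\tau|_H$) and, by induction on the quotient $\mathsf{N}/H$, co-minimal (giving $\sigma/H=\tau/H$). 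Corollary \ref{cor:rmH} is precisely what lets the corner generator $G_{1,n}$, which lives deepest in the center, control the adjacent Heisenberg block, so the induction on $n$ together with the layer-by-layer induction inside $\mathsf{N}$ should close the argument.

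I would organize the write-up so that completeness of $\F$ is used exactly where retroboundedness arguments require limits of Cauchy-type expressions to stay inside $\mathsf{N}$ (so that the coarser topology's closedness statements, as in the co-minimality half of Proposition \ref{prop:retcom}, remain available). The delicate bookkeeping is ensuring that at every stage the conjugation action of the full diagonal group $\mathsf{A}$ — not merely of a single torus coordinate — is exploited, since it is the simultaneous availability of all ratios $a_{ii}a_{jj}^{-1}$ that supplies enough scaling to run the retrobounded argument on each off-diagonal coordinate independently.
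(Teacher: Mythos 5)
Your scaffolding --- induction on $n$, isolating the Heisenberg block (top row plus last column), reducing its relative minimality to that of the corner group via Corollary \ref{cor:rmH}, handling the corner by diagonal scaling plus retroboundedness, and finishing with Merson's Lemma (Fact \ref{firMer}) --- matches the paper's skeleton. But the step you yourself flag as the main obstacle, the passage from the relatively minimal pieces to all of $\mathsf{N}$, is where your plan has a genuine gap, and it is not the route the paper takes. Merson's Lemma needs two inputs: $\sigma|_H=\tau_p|_H$ \emph{and} $\sigma/H=\tau_p/H$. Your stratification by the descending central series supplies only the first. For the second you appeal to co-minimality of $H$, ``by induction on the quotient $\mathsf{N}/H$,'' but no induction hypothesis reaches those quotients: central-series quotients such as $\UI(3,\F)/G_{1,3}\cong\F^2$ are not unitriangular groups of smaller size, so the induction on $n$ says nothing about them. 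Worse, the central layers are not complemented --- a subgroup of $\UI(3,\F)$ mapping isomorphically onto $\F^2$ would be abelian, contradicting the nontrivial commutator pairing --- so you cannot identify the quotient topology with the topology of a complementary subgroup. Co-minimality of a central one-parameter subgroup in a Heisenberg-type group is itself a nontrivial theorem (cf.\ Example \ref{r:known}(4), from \cite{DM10}), not something your induction delivers for free.

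The paper avoids this entirely by choosing a layer that \emph{does} split: $\UI(n+2,\F)=\H(n,\F)\leftthreetimes\widetilde{\UI(n,\F)}$, where the complement $\widetilde{\UI(n,\F)}$ is the middle-block copy of $\UI(n,\F)$ inside $\widetilde{\SUT(n,\F)}$ --- precisely the group covered by the induction hypothesis (the induction steps by $2$, with base cases $n=2,3$). Claim \ref{cl:2} of the paper's proof then shows that any coarser Hausdorff topology $\sigma$ also splits along this decomposition: completeness of $\F$ makes $\H(n,\F)$ $\sigma$-closed (so the quotient is Hausdorff), the induction hypothesis forces the quotient map restricted to $\widetilde{\UI(n,\F)}$ to be a topological isomorphism onto $\UI(n+2,\F)/\H(n,\F)$, and \cite[Proposition 6.17]{RoD} yields the semidirect-product decomposition of $(\UI(n+2,\F),\sigma|_{\UI(n+2,\F)})$. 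That splitting is exactly what produces $\sigma/\H(n,\F)=\tau_p/\H(n,\F)$, after which Merson's Lemma closes the induction. Your observation that each individual $G_{i,j}$ is relatively minimal via the ratio action $a_{ii}a_{jj}^{-1}$ (full scaling for $n\geq 3$, by strict minimality of the field, Remark \ref{r:strict}(2)) is sound but insufficient: knowing $\sigma=\tau_p$ on every one-parameter subgroup does not propagate without a quotient-topology statement. If you want to keep a layer-by-layer flavor, replace the central series by a complemented filtration (the Heisenberg block as in the paper, or the last-column subgroup $\F^{n-1}$ with complement $\widetilde{\UI(n-1,\F)}$); as written, the quotient half of your Merson application is unproven.
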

\begin{proof}
	By Theorem \ref{t1}, $\STP(2,\F)$ is minimal. In particular,
	its subgroup 
	 $\UT(2,\F)$ is relatively minimal
	in $\STP(2,\F).$	The corner 1-parameter group $G_{1,3}$ is a subgroup of  $$ P:= 
	\Bigg\{\left(\begin{array}{ccc}
	a & 0 &  b \\
	0 & 1 & 0 \\
	0 & 0 & a^{-1} 
	\end{array}\right)\bigg |  \ a,b \in \F\Bigg\}.
	$$ 	
	Observe that $P$ 
	is topologically  isomorphic to the minimal group $\STP(2,\F)$. So, $G_{1,3}$ is relatively minimal in $P$ and hence also in the larger group $\STP(3,\F)$. By Corollary \ref{cor:rmH}, the Heisenberg group $\UT(3,\F)=\H(1,\F)$ is relatively minimal in $\STP(3,\F).$
	
	Continuing by induction on $n$ and assuming that $\UT(n,\F)$ is relatively minimal
	in $\STP(n,\F)$, we will prove that  $\UT(n+2,\F)$ is relatively minimal
	in $\STP(n+2,\F).$ Fix $n\geq 2$ and observe that $\H(n,\F)$ is a normal subgroup of $\STP(n+2,\F).$ In particular, $\H(n,\F)$ is a normal subgroup of $\UT(n+2,\F).$

	Moreover, we have   \begin{equation}\label{eq:wtilde}\UT(n+2,\F)=\widetilde{\UT(n,\F)}\H(n,\F),\end{equation}
	where 
	$$\widetilde{\UT(n,\F)}= \Bigg\{\left( \begin{array}{ccc}
	1 & 0_{1\times n} & 0 \\
	0_{n\times 1} & X & 0_{n\times 1} \\
	0 & 0_{1\times n}  & 1 
	\end{array}\right)\bigg| \  X\in \UT(n,\F) \Bigg \}.$$

Indeed, if  $X\in \UT(n,\F),\ a,c\in \F^n$ and  $b\in \F$, then	
$$	\left(\begin{array}{ccc}
		1 & 0_{1\times n} & 0 \\
		0_{n\times 1} & X^{-1} & 0_{n\times 1} \\
		0 & 0_{1\times n}  & 1 
	\end{array}\right) \cdot
\left(	\begin{array}{ccc}
	1 & a & b \\
	0_{n\times 1} & X & c \\
	0 & 0_{1\times n}  & 1 
\end{array}\right) \in  \H(n,\F).$$

	\noindent
	
	\begin{claim} \label{cl:1}  
	$\widetilde{\UT(n,\F)}$ and $\H(n,\F)$ are relatively minimal in $\STP(n+2,\F).$ 
	\end{claim} 
	\begin{proof}
		Denote by $\psi \colon \widetilde{\STP(n,\F)} \to \STP(n,\F)$ the natural topological group isomorphism 
		from  $$\widetilde{\STP(n,\F)}= \Bigg\{\left( \begin{array}{ccc}
		1 & 0_{1\times n} & 0 \\
		0_{n\times 1} & X & 0_{n\times 1} \\
		0 & 0_{1\times n}  & 1 
		\end{array}\right) \bigg | \ X\in \STP(n,\F) \Bigg \}$$ onto $\STP(n,\F).$ Since $\psi(\widetilde{\UT(n,\F)})=\UT(n,\F),$ we deduce by the induction hypothesis that
		$\widetilde{\UT(n,\F)}$ is relatively minimal in $\widetilde{\STP(n,\F)}$ and hence also in the larger group  $\STP(n+2,\F).$

		The corner 1-parameter group $G_{1,n+2}$ is a subgroup of  $$ P:= 
		\Bigg\{\left(\begin{array}{ccc}
		a & 0_{1\times n} &  b \\
		0_{n\times 1} & I_{n} & 0_{n\times 1} \\
		0 & 0_{1\times n} & a^{-1} 
		\end{array}\right) \bigg | \ a\in \F^{\times}, b \in \F\Bigg\}
		$$ and $P$ is topologically  isomorphic  (Theorem \ref{t1}) to the minimal  group $\STP(2,\F).$   So, $G_{1,n+2}$ is relatively minimal in $P$ and also in the larger group $\STP(n+2,\F)$. Now by Corollary \ref{cor:rmH}, the Heisenberg group $\H(n,\F)$ is relatively minimal in $\STP(n+2,\F).$
	\end{proof}

\vskip 0.2cm 
	Let $\sigma\subseteq \tau_p$ be a coarser Hausdorff group topology on $\STP(n+2,\F),$ where $\tau_p$ is the given (pointwise) topology.
	Clearly, $\H(n,\F)\cap \widetilde{\UT(n,\F)}$ is trivial. So by (\ref{eq:wtilde}), we deduce that $\UT(n+2,\F)$ is algebraically isomorphic to $\H(n,\F)\rtimes \widetilde{\UT(n,\F)}$. 
	
	\noindent
	
	\vskip 0.3cm 
	\begin{claim} \label{cl:2}  
	 $(\UT(n+2,\F), \sigma|_{\UT(n+2,\F)})$ is topologically isomorphic to  $$(\H(n,\F),\sigma|_{\H(n,\F)})\rtimes (\widetilde{\UT(n,\F)},\sigma|_{\widetilde{\UT(n,\F)}}).$$
	\end{claim} 
 
	\begin{proof}
		Consider the quotient map 
		$$q \colon (\STP(n+2,\F),\sigma)\to \STP(n+2,\F)/\H(n,\F).$$ From Claim \ref{cl:1} we obtain that  $\sigma|_{\H(n,\F)}=\tau_p|_{\H(n,\F)}$. So, the completeness of $\F$ implies that $\H(n,\F)$ is $\sigma$-closed in $\STP(n+2,\F)$. This means that $\sigma/\H(n,\F)$ is Hausdorff.
		Clearly, $\widetilde{\STP(n,\F)}\cap \H(n,\F)$ is trivial. Hence, the restriction  $$q|_{\widetilde{\STP(n,\F)}}\colon (\widetilde{\STP(n,\F)}, \sigma|_{\widetilde{\STP(n,\F)}}) \to q(\widetilde{\STP(n,\F))}$$ is a continuous isomorphism into a Hausdorff group. 
By the induction hypothesis, 	$\widetilde{\UT(n,\F)}$ is relatively minimal in $\widetilde{\STP(n,\F)}$ and  $$q|_{\widetilde{\UT(n,\F)}}\colon  (\widetilde{\UT(n,\F)},\sigma|_{\widetilde{\UT(n,\F)}})\to q({\widetilde{\UT(n,\F)}})$$ is a topological group isomorphism. 
		 Since 
		$q({\widetilde{\UT(n,\F)}})=\UT(n+2,\F)/\H(n,\F)$ and using \cite[Proposition 6.17]{RoD}, we deduce that $(\UT(n+2,\F), \sigma|_{\UT(n+2,\F)})$ is topologically isomorphic to the semidirect product $$(\H(n,\F),\sigma|_{H(n,\F)})\rtimes (\widetilde{\UT(n,\F)},\sigma|_{\widetilde{\UT(n,\F)}}).$$
	\end{proof}

By Claim \ref{cl:1}, $(\sigma|_{\UT(n+2,\F)})|_{\H(n,\F)}=(\tau_p|_{\UT(n+2,\F)})|_{\H(n,\F)}.$
By Claim \ref{cl:2}, 
$$(\sigma|_{\UT(n+2,\F)})/{\H(n,\F)}=\sigma|_{\widetilde{\UT(n,\F)}}$$ and 
$$(\tau_p|_{\UT(n+2,\F)})/{\H(n,\F)}=\tau_p|_{\widetilde{\UT(n,\F)}}.$$
Using Claim \ref{cl:1} again, we obtain that $\sigma|_{\widetilde{\UT(n,\F)}}=\tau_p|_{\widetilde{\UT(n,\F)}}$. It follows that $$(\sigma|_{\UT(n+2,\F)})/{\H(n,\F)}=(\tau_p|_{\UT(n+2,\F)})/{\H(n,\F)}$$ and by Merson's Lemma (Fact \ref{firMer}) we deduce that $\sigma|_{\UT(n+2,\F)}=\tau_p|_{\UT(n+2,\F)},$ as needed.  
\end{proof}


\begin{lemma}\label{lemma:comp}
	For every $1\leq i\leq n-1,$ let $E_{i,i+1}\in G_{i,i+1}$ with $p_{i,i+1}(E_{i,i+1})=1.$
	Then for every
	$B\in 	\mathsf{A},$ we have 
	\begin{equation}\label{eq:fir2}p_{i,i+1}(\alpha(B,E_{i,i+1}))= p_{i,i+1}(BE_{i,i+1}B^{-1})= p_{i,i}(B)(p_{i+1,i+1}(B))^{-1}.
	\end{equation} 
\end{lemma}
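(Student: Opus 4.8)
The plan is to reduce the statement to the standard formula for conjugating a matrix unit by a diagonal matrix. First I would write $E_{i,i+1} = I + e_{i,i+1}$, where $e_{i,i+1}$ denotes the matrix unit having a single $1$ in position $(i,i+1)$ and zeros elsewhere; this is exactly the description of the generator of $G_{i,i+1}$ normalized so that $p_{i,i+1}(E_{i,i+1}) = 1$. Writing $B = \mathrm{diag}(b_{11},\dots,b_{nn}) \in \mathsf{A}$ and recalling that $\alpha(B,\cdot)$ is the action by conjugation, the whole task is to compute $B E_{i,i+1} B^{-1}$ and read off its $(i,i+1)$ entry.

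The key step is the elementary identity $B e_{k,l} B^{-1} = b_{kk}\, b_{ll}^{-1}\, e_{k,l}$, valid for any invertible diagonal $B$. This follows immediately from the entrywise computation
\[
(B e_{k,l} B^{-1})_{pq} = b_{pp}\,(e_{k,l})_{pq}\, b_{qq}^{-1} = b_{kk}\, b_{ll}^{-1}\, \delta_{pk}\delta_{ql},
\]
where the last equality uses $(e_{k,l})_{pq} = \delta_{pk}\delta_{ql}$ to replace $b_{pp}$ by $b_{kk}$ and $b_{qq}$ by $b_{ll}$. Applying this with $(k,l) = (i,i+1)$ and using linearity gives
\[
\alpha(B, E_{i,i+1}) = B(I + e_{i,i+1})B^{-1} = I + b_{ii}\, b_{i+1,i+1}^{-1}\, e_{i,i+1}.
\]

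Finally I would extract the $(i,i+1)$ coordinate of this matrix: it equals the identity plus $b_{ii}\,b_{i+1,i+1}^{-1}$ times $e_{i,i+1}$, so its $(i,i+1)$ entry is $b_{ii}\,b_{i+1,i+1}^{-1}$, that is,
\[
p_{i,i+1}(\alpha(B, E_{i,i+1})) = p_{i,i}(B)\,(p_{i+1,i+1}(B))^{-1},
\]
which is precisely \eqref{eq:fir2}. There is no genuine obstacle here: the entire content is the single fact that conjugation by a diagonal matrix rescales the $(i,i+1)$ entry by the ratio of the $i$-th and $(i+1)$-st diagonal entries. The only points deserving a word of care are that $B$ is invertible (automatic, since $B \in \mathsf{A}$ has determinant $1$, so in particular no $b_{ii}$ vanishes) and that the normalization $p_{i,i+1}(E_{i,i+1}) = 1$ is exactly what makes the scaling factor appear with no spurious constant.
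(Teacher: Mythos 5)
Your proof is correct and is essentially the same as the paper's, which simply records this as ``Easy calculations'': conjugating the matrix unit $e_{i,i+1}$ by a diagonal matrix rescales it by $b_{ii}\,b_{i+1,i+1}^{-1}$, and the normalization $p_{i,i+1}(E_{i,i+1})=1$ gives the stated formula. Nothing further is needed.
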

\begin{proof}
	Easy calculations.  
\end{proof}

 \vskip 0.2cm 
\begin{lemma}\label{lem:cmaps}
	Let $\F$ be a topological field and $n\geq 2$ be a positive number.  Suppose that $\tau$ is a group topology on $\mathsf{A}$   such that
	all $n-1$ actions $$\alpha_i\colon (\mathsf{A},\tau)\times (G_{i,i+1},\tau_p)\to (G_{i,i+1},\tau_p), \ \ \ i \in \{1, \cdots, n-1\}$$ are continuous, where $\tau_p$ is the pointwise topology and $\alpha_i=\alpha|_{\mathsf{A}\times G_{i,i+1}}.$ Then \ben
	\item 
	the homomorphism $$t_i\colon  \mathsf{A}\to \F^{\times}, \ t_i(B)=(p_{1,1}(B))(p_{i+1,i+1}(B))^{-1}$$ is continuous for every $1\leq i\leq n-1$;
	\item the homomorphism $m_i \colon \mathsf{A}\to \F^{\times}, \ m_i(B)=(p_{i,i}(B))^n$ is continuous for every $1\leq i\leq n.$
	\een
\end{lemma}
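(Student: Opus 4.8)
The plan is to reduce both parts to the continuity of the single ratio maps
$$s_i \colon (\mathsf{A},\tau) \to \F^{\times}, \qquad s_i(B) = p_{i,i}(B)\,(p_{i+1,i+1}(B))^{-1} \quad (1 \le i \le n-1),$$
and then to assemble $t_i$ and $m_i$ out of the $s_i$ using the group structure of $\F^{\times}$ together with the determinant-one constraint defining $\mathsf{A}$.

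First I would extract the continuity of each $s_i$ from the hypothesis. A jointly continuous action is separately continuous, so fixing the generator $E_{i,i+1}\in G_{i,i+1}$ with $p_{i,i+1}(E_{i,i+1})=1$, the map $B \mapsto \alpha_i(B, E_{i,i+1})$ is continuous from $(\mathsf{A},\tau)$ into $(G_{i,i+1}, \tau_p)$. Composing with the coordinate projection $p_{i,i+1}$, which identifies $(G_{i,i+1},\tau_p)$ homeomorphically with $\F$, and invoking the explicit formula of Lemma \ref{lemma:comp}, this composition is precisely $s_i$. Hence $s_i$ is continuous into $\F$; since its image lies in $\F^{\times}$, which carries the subspace topology, $s_i$ is continuous into $\F^{\times}$.

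For part (1), I would observe that the $s_j$ telescope: $t_i = \prod_{j=1}^i s_j$, since $\prod_{j=1}^i p_{j,j}(B)(p_{j+1,j+1}(B))^{-1} = p_{1,1}(B)(p_{i+1,i+1}(B))^{-1}$, and all indices $j\le i\le n-1$ are available. As $\F^{\times}$ is a topological group (multiplication and inversion being continuous because $\F$ is a topological field), this finite product of continuous maps is continuous, which gives continuity of $t_i$. For part (2), the key input is the relation $\prod_{k=1}^n p_{k,k}(B) = 1$ coming from $B \in \mathsf{A}$. Dividing $p_{i,i}(B)^n$ by this product yields
$$m_i(B) = p_{i,i}(B)^n = \prod_{k=1}^n p_{i,i}(B)\,(p_{k,k}(B))^{-1},$$
and each factor $p_{i,i}(B)(p_{k,k}(B))^{-1}$ is a finite product of the $s_j$ or their inverses (namely $s_i s_{i+1}\cdots s_{k-1}$ if $i<k$, its inverse if $i>k$, and $1$ if $i=k$), hence continuous into $\F^{\times}$. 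The product of these $n$ continuous maps is continuous, establishing continuity of $m_i$.

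The only genuinely delicate point is the first step, that is, passing from the joint continuity of the two-variable action $\alpha_i$ to the continuity of the one-variable ratio $s_i$, through separate continuity, the homeomorphism $p_{i,i+1}$, and Lemma \ref{lemma:comp}. Everything afterwards is formal manipulation inside the topological group $\F^{\times}$.
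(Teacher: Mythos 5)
Your proposal is correct and follows essentially the same route as the paper: both extract continuity of the ratio maps $B\mapsto p_{i,i}(B)(p_{i+1,i+1}(B))^{-1}$ from the hypothesis via Lemma \ref{lemma:comp}, then build $t_i$ and $m_i$ from them inside the topological group $\F^{\times}$ using the relation $\prod_{k=1}^{n}p_{k,k}(B)=1$. The only differences are cosmetic bookkeeping: the paper proves (1) by induction ($t_i=t_{i-1}\psi$) where you write the telescoping product explicitly, and for (2) the paper derives $m_1=\prod_{i=1}^{n-1}t_i$ and then $m_i=m_1(t_{i-1})^{-n}$, where you factor $m_i$ directly into ratios.
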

\begin{proof}
	(1) Since $\alpha_1 \colon (\mathsf{A},\tau)\times (G_{1,2},\tau_p)\to (G_{1,2},\tau_p)$ is continuous and $\tau_p$ is the pointwise topology, (\ref{eq:fir2}) 
	guarantees that $t_1$ is continuous. Now assume that $t_{i-1}$ is continuous and let us see that $t_i$ is continuous. 
Using (\ref{eq:fir2}) again, in view of the continuity of $\alpha_i,$ we deduce that the homomorphism $$\psi\colon  \mathsf{A}\to \F^{\times}, \ \psi(B)=p_{i,i}(B)(p_{i+1,i+1}(B))^{-1}$$ is continuous. The equality
	$t_{i}(B)=t_{i-1}(B)\psi(B)$ completes the proof.\\
	(2) For every $B\in \mathsf{A}$ we have $\prod_{i=1}^{n}p_{i,i}(B)=1$. This implies that $\prod_{i=1}^{n-1}t_{i}=(p_{1,1})^n$. By item $(1)$ and the fact that $\F$ is a topological field, we deduce that $m_1=(p_{1,1})^n$ is continuous.  We use the equality $m_i=m_1(t_{i-1})^{-n}$ to establish the continuity of $m_i$ for every $1<i\leq n.$
\end{proof}

\vskip 0.3cm 
\subsection{The action $\widetilde{\alpha}$ }\label{sbs:act}
Denote by $\tau_p$ the original pointwise topology on $\STP(n,\F)$ and by $\widetilde{\tau_p}$ the quotient topology on $\STP(n,\F)/Z$ with respect to the homomorphism  
$$q\colon \STP(n,\F)\to \STP(n,\F)/Z,$$  where $Z=Z(\SL(n,\F)).$
 The continuous action  $\alpha\colon (\mathsf{A},\tau_{p}|_\mathsf{A})\times (\mathsf{N},\tau_{p}|_\mathsf{N})\to (\mathsf{N}, \tau_p|_\mathsf{N})$ induces the action 
 $$\widetilde{\alpha}\colon (q(\mathsf{A}),\widetilde{\tau_p}|_{q(\mathsf{A})})\times (q(\mathsf{N}),\widetilde{\tau_p}|_{q(\mathsf{N})})\to (q(\mathsf{N}),\widetilde{\tau}|_{q(\mathsf{N})}).$$ 

Taking into account that $\STP(n,\F)\cong \mathsf{N}\rtimes_{\alpha} \mathsf{A}$ and the intersection $q(\mathsf{A})\cap q(\mathsf{N})$ is trivial,
one may identify $\STP(n,\F)/Z$ with the topological semidirect product $q(\mathsf{N})\rtimes_{\widetilde\alpha}~q(\mathsf{A}).$ 

 \vskip 0.1cm
  The next lemma will be used to prove the continuity of $\widetilde{\alpha}$.
 
\begin{lemma}\label{lem:opmap}
	The map $q|_\mathsf{N}\colon \mathsf{N}\to q(\mathsf{N})$ is a topological isomorphism.
\end{lemma}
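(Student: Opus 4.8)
The plan is to split the statement into the easy algebraic/continuity half and the genuinely substantive half, namely openness onto the image. First I would record that the center $Z=\{\lambda I:\lambda\in\mu_n\}$ consists of scalar matrices of determinant $1$, so $Z\subseteq\mathsf{A}$ and in particular $\mathsf{N}\cap Z\subseteq\mathsf{N}\cap\mathsf{A}=\{I\}$; hence $q|_\mathsf{N}$ is injective. Continuity is inherited from $q$, so $q|_\mathsf{N}\colon\mathsf{N}\to q(\mathsf{N})$ is a continuous isomorphism of topological groups, and everything reduces to showing that its inverse is continuous, equivalently that $q|_\mathsf{N}$ is open onto $q(\mathsf{N})$.

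The cleanest route I would take is to identify the whole quotient. Since $Z\subseteq\mathsf{A}$ is a finite central subgroup of $\SUT(n,\F)$ and $\SUT(n,\F)\cong\mathsf{N}\leftthreetimes_{\alpha}\mathsf{A}$ topologically by Lemma \ref{l:topSemProd}, I would consider the homomorphism $\Phi\colon\mathsf{N}\leftthreetimes_{\alpha}\mathsf{A}\to\mathsf{N}\leftthreetimes_{\bar\alpha}(\mathsf{A}/Z)$, $(\bar b,\bar a)\mapsto(\bar b,\bar a Z)$, where $\bar\alpha$ is the conjugation action of $\mathsf{A}/Z$ on $\mathsf{N}$; this action is well defined precisely because the scalar matrices in $Z$ act trivially by conjugation. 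The map $\Phi$ is the product of the identity on $\mathsf{N}$ with the open quotient $\mathsf{A}\to\mathsf{A}/Z$, hence continuous, surjective and open, and its kernel is exactly $\{(I,z):z\in Z\}$, i.e. $Z$. Therefore $\Phi$ induces a topological isomorphism $\SUT(n,\F)/Z\cong\mathsf{N}\leftthreetimes_{\bar\alpha}(\mathsf{A}/Z)$, under which $q|_\mathsf{N}$ becomes the canonical embedding $\bar b\mapsto(\bar b,Z)$ of $\mathsf{N}$ onto the normal factor. That embedding is manifestly a topological embedding, which yields the lemma.

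I expect the main obstacle to be exactly this continuity of $(q|_\mathsf{N})^{-1}$: a restriction of an open map to a subgroup need not be open onto its image, and what rescues the argument is the interplay of $Z\subseteq\mathsf{A}$, the triviality $\mathsf{N}\cap\mathsf{A}=\{I\}$, and the discreteness of the finite group $Z$. If one prefers a direct argument to the quotient identification, I would fix an open $V\ni I$ in $\mathsf{N}$ and, using the semidirect decomposition, choose a product neighborhood $O=O_\mathsf{N}O_\mathsf{A}$ (open in $\SUT(n,\F)$) with $O_\mathsf{N}\subseteq V$ and $O_\mathsf{A}\cap Z=\{I\}$; then I would check $q(\mathsf{N})\cap q(O)=q(O_\mathsf{N})$. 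Indeed, any $x\in O$ with $q(x)\in q(\mathsf{N})$ can be written $x=Nw$ with $N\in\mathsf{N}$ and $w\in Z\subseteq\mathsf{A}$, and uniqueness of the $\mathsf{N}\mathsf{A}$-decomposition forces the $\mathsf{A}$-part of $x$ to lie in $O_\mathsf{A}\cap Z=\{I\}$, so $x\in O_\mathsf{N}$. Since $q$ is open, $q(O)$ is open, so $q(O_\mathsf{N})=q(\mathsf{N})\cap q(O)$ is an open neighborhood of $q(I)$ in $q(\mathsf{N})$ contained in $q(V)$; this gives openness of $q|_\mathsf{N}$ at the identity, hence everywhere, completing the proof.
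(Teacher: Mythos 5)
Your argument is correct, but it takes a genuinely different route from the paper. The paper's proof is a short saturation argument: $q$ is open and $q^{-1}(q(\mathsf{N}))=\mathsf{N}Z$, so the restriction $q|_{\mathsf{N}Z}\colon \mathsf{N}Z\to q(\mathsf{N})$ is open onto the image; since $Z$ is finite, $\mathsf{N}$ is a closed subgroup of finite index in $\mathsf{N}Z$, hence open in $\mathsf{N}Z$, and therefore $q|_{\mathsf{N}}$ is open. Your main route instead proves a stronger statement: the whole quotient is a topological semidirect product, $\SUT(n,\F)/Z\cong \mathsf{N}\leftthreetimes_{\bar\alpha}(\mathsf{A}/Z)$, obtained from the open continuous surjection $\Phi=\mathrm{id}_{\mathsf{N}}\times\pi$ (with $\pi\colon \mathsf{A}\to\mathsf{A}/Z$) having kernel $Z$, under which $q|_{\mathsf{N}}$ becomes the factor embedding $\bar b\mapsto(\bar b,Z)$. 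This buys more than the lemma: it packages in one stroke the identification $\SUT(n,\F)/Z\cong q(\mathsf{N})\leftthreetimes_{\widetilde\alpha}q(\mathsf{A})$ and the continuity of $\widetilde\alpha$ that the paper assembles separately in Subsection \ref{sbs:act} (Lemma \ref{lem:atilcon}); moreover, unlike the paper's finite-index step, it uses only that $Z$ is a subgroup of $\mathsf{A}$ acting trivially on $\mathsf{N}$ by conjugation, not that $Z$ is finite. Conversely, the paper's argument is more economical and avoids all semidirect-product bookkeeping. One point you should make explicit in the main route: $\bar\alpha$ must be checked continuous for $\mathsf{N}\leftthreetimes_{\bar\alpha}(\mathsf{A}/Z)$ to be a topological group; this is immediate because $\pi\times\mathrm{id}_{\mathsf{N}}$ is an open surjection, hence a quotient map, and $\bar\alpha\circ(\pi\times\mathrm{id}_{\mathsf{N}})=\alpha$ is continuous. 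Your fallback direct argument (the product neighborhood $O_{\mathsf{N}}O_{\mathsf{A}}$ with $O_{\mathsf{A}}\cap Z=\{I\}$, whose openness in $\SUT(n,\F)$ is exactly Lemma \ref{l:topSemProd}) is also correct, and is closer in spirit to the paper in that it exploits the discreteness of the finite group $Z$.
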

\begin{proof}
The homomorphism $q|_\mathsf{N}$ is a bijection because $\mathsf{N} \cap Z$ is trivial. 
It suffices to show that $q|_\mathsf{N}\colon \mathsf{N}\to q(\mathsf{N})$ is an open map. Observe that 
$q$ is open and $q^{-1}(q(\mathsf{N}))=\mathsf{N}Z$. This implies that the restriction map 
$$q|_{\mathsf{N}Z} \colon  q^{-1}(q(\mathsf{N}))=\mathsf{N}Z\to q(\mathsf{N})$$ is also open. Having finite index in $\mathsf{N}Z$, the closed subgroup $\mathsf{N}$ is open in $\mathsf{N}Z$. It follows that $q|_\mathsf{N}$ is an open map.
\end{proof}

\vskip 0.1cm 
\begin{lemma}\label{lem:atilcon}
	The action $\widetilde{\alpha} \colon (q(\mathsf{A}),\widetilde{\tau_p}|_{q(\mathsf{A})})\times (q(\mathsf{N}),\widetilde{\tau_p}|_{q(\mathsf{N})})\to (q(\mathsf{N}),\widetilde{\tau_p}|_{q(\mathsf{N})})$ is continuous.
\end{lemma}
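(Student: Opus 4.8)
The plan is to realize $\widetilde{\alpha}$ as the map induced on a product quotient by the continuous action $\alpha$, and then invoke the universal property of quotient maps. First I would record that since $Z=\{\lambda I:\lambda\in \mu_n\}$ consists of scalar matrices of determinant $1$, we have $Z\subseteq \mathsf{A}$, so that $q(\mathsf{A})=\mathsf{A}/Z$, while $\mathsf{N}\cap Z$ is trivial and $q|_\mathsf{N}$ is injective. The guiding identity is
$$\widetilde{\alpha}\circ (q|_\mathsf{A}\times q|_\mathsf{N}) = q|_\mathsf{N}\circ \alpha,$$
which follows at once from $\widetilde{\alpha}(q(B),q(N))=q(\alpha(B,N))$ together with the centrality of $Z$. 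The right-hand side is continuous, being the composition of the continuous action $\alpha$ with the continuous map $q|_\mathsf{N}$.

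Next I would argue that $\Phi:=q|_\mathsf{A}\times q|_\mathsf{N}$ is a quotient map, for then the continuity of $\widetilde{\alpha}\circ \Phi$ forces the continuity of $\widetilde{\alpha}$. The crucial point is that $q\colon \SUT(n,\F)\to \SUT(n,\F)/Z$ is open, being the quotient homomorphism by a subgroup of a topological group. Since $q^{-1}(q(\mathsf{A}))=\mathsf{A}Z=\mathsf{A}$, the subgroup $\mathsf{A}$ is $q$-saturated, and a routine saturation computation shows $q(W)=q(\mathsf{A})\cap q(V)$ whenever $W=\mathsf{A}\cap V$ with $V$ open; hence $q|_\mathsf{A}\colon \mathsf{A}\to q(\mathsf{A})$ is open onto its image with the subspace topology $\widetilde{\tau_p}|_{q(\mathsf{A})}$. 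Combined with Lemma \ref{lem:opmap}, which gives that $q|_\mathsf{N}$ is a topological isomorphism (in particular open), this exhibits $\Phi$ as a product of open maps, hence open, hence a quotient map.

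Finally, applying the universal property of the quotient map $\Phi$ to the continuous map $q|_\mathsf{N}\circ \alpha=\widetilde{\alpha}\circ \Phi$ yields the continuity of $\widetilde{\alpha}$. The step I expect to be the main obstacle is precisely the assertion that $\Phi$ is a quotient map: products of quotient maps need not be quotient maps in general, and the argument genuinely relies on $q|_\mathsf{A}$ and $q|_\mathsf{N}$ being \emph{open}. This is why the openness of the group quotient $q$ and the saturation equality $q^{-1}(q(\mathsf{A}))=\mathsf{A}$ are doing the real work, while the remainder is a formal diagram chase.
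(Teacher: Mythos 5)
Your proof is correct and uses exactly the same ingredients as the paper's: the commutative square $q|_\mathsf{N}\circ\alpha=\widetilde{\alpha}\circ(q|_\mathsf{A}\times q|_\mathsf{N})$, the openness of $q|_\mathsf{A}$ deduced from the saturation $q^{-1}(q(\mathsf{A}))=\mathsf{A}$, and the openness of $q|_\mathsf{N}$ from Lemma \ref{lem:opmap}. The only difference is packaging: the paper chases neighborhoods pointwise, while you bundle the two open maps into the open (hence quotient) surjection $\Phi$ and invoke the universal property, which is a clean but essentially equivalent way of organizing the same argument.
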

\begin{proof} We have the following commutative diagram
	\begin{equation} \label{diag1}
	\xymatrix{ \mathsf{A} \ar@<-2ex>[d]_{q} \times \mathsf{N} \ar@<2ex>[d]^{q} \ar[r]^{\alpha}  & \mathsf{N} \ar[d]^{q} \\
		q(\mathsf{A}) \times q(\mathsf{N}) \ar[r]^{\tilde{\alpha}}  &  q(\mathsf{N}) }
	\end{equation} 
	Fix an arbitrary $(a,n)\in \mathsf{A}\times \mathsf{N}$ and let $U$ be a $\widetilde{\tau_p}|_{q(\mathsf{N})}$-neighborhood of \[\widetilde{\alpha}(q(a),(q(n))=q(\alpha(a,n)).\] By the continuity of $q|_\mathsf{N}$, there exists  a $\tau_p|_\mathsf{N}$-neighborhood $V$  of $\alpha(a,n)$ such that $q(V)\subseteq U$. By the continuity of $\alpha$, there exist 
	a $\tau_p|_\mathsf{A}$-neighborhood $W$ of $a$ and a $\tau_p|_\mathsf{N}$-neighborhood of $n$ such that $\alpha(W\times O)\subseteq V$. 
Since $q^{-1}(q(\mathsf{A}))=\mathsf{A},$ it follows that $q|_\mathsf{A}$ is open.
	 By Lemma \ref{lem:opmap}, also $q|_\mathsf{N}$ is 
	 open. So, $q(W)$ is a $\widetilde\tau_p|_{q(\mathsf{A})}$-neighborhood $W$ of $q(a)$ and  $q(O)$ is a $\widetilde\tau_p|_{q(\mathsf{N})}$-neighborhood of $q(n)$. Then \[q(\alpha(W\times O))=\widetilde{\alpha}(q(W)\times q(\mathsf{A}))\subseteq q(V)\subseteq U\] which proves the continuity of $\widetilde{\alpha}$ in $(q(a),q(n))$.
\end{proof}


\begin{proposition}
		Let $\F$ be a  non-discrete locally retrobounded complete field. 
		Then	$q(\mathsf{N})$ is $q(\mathsf{A})$-minimal with respect to the action $\widetilde{\alpha}.$
\end{proposition}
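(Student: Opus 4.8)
The plan is to deduce the desired $q(\mathsf{A})$-minimality directly from the already established $\mathsf{A}$-minimality of $\mathsf{N}$, transporting everything through the topological isomorphism $q|_{\mathsf{N}}$ of Lemma~\ref{lem:opmap}. First I would record that $\mathsf{N}$ is $\mathsf{A}$-minimal: by Proposition~\ref{prop:rmncomp} the subgroup $\mathsf{N}$ is relatively minimal in $\SUT(n,\F)=\mathsf{N}\leftthreetimes_{\alpha}\mathsf{A}$, and by Fact~\ref{f:rel-min=G-min} this is exactly the assertion that the $\mathsf{A}$-group $\mathsf{N}$ is $\mathsf{A}$-minimal.

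The key structural observation is that $Z=\{\lambda I:\lambda\in\mu_n\}$ is contained in $\mathsf{A}$ (each $\lambda I$ is diagonal with determinant $\lambda^n=1$), so $q(\mathsf{A})=\mathsf{A}/Z$ and the projection $\pi:=q|_{\mathsf{A}}\colon \mathsf{A}\to q(\mathsf{A})$ is a continuous open surjection. Since the scalar matrices in $Z$ are central, conjugation by them is trivial; hence the action $\alpha$ of $\mathsf{A}$ on $\mathsf{N}$ factors through $\pi$, and the commutative diagram~\eqref{diag1} reads $q(\alpha(a,n))=\widetilde{\alpha}(\pi(a),q(n))$ for all $(a,n)\in\mathsf{A}\times\mathsf{N}$.

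With this in hand, suppose $\sigma$ is a Hausdorff group topology on $q(\mathsf{N})$, coarser than the given one, for which $\widetilde{\alpha}\colon q(\mathsf{A})\times (q(\mathsf{N}),\sigma)\to (q(\mathsf{N}),\sigma)$ remains continuous. I would pull $\sigma$ back along the topological isomorphism $j:=q|_{\mathsf{N}}\colon\mathsf{N}\to q(\mathsf{N})$ to a Hausdorff group topology $\sigma'$ on $\mathsf{N}$; since $j$ is a topological isomorphism for the original topologies, $\sigma'$ is coarser than $\tau_p|_{\mathsf{N}}$. Because $\alpha=j^{-1}\circ\widetilde{\alpha}\circ(\pi\times j)$ and each factor is continuous ($\pi$ for the original topology on $\mathsf{A}$, the maps $j$ and $j^{-1}$ between $(\mathsf{N},\sigma')$ and $(q(\mathsf{N}),\sigma)$, and $\widetilde{\alpha}$ by hypothesis), the action $\alpha\colon \mathsf{A}\times(\mathsf{N},\sigma')\to(\mathsf{N},\sigma')$ is continuous. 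The $\mathsf{A}$-minimality of $\mathsf{N}$ then forces $\sigma'=\tau_p|_{\mathsf{N}}$, whence $\sigma$ coincides with the original topology on $q(\mathsf{N})$, as required.

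The only genuine point requiring care is that passing between the $\mathsf{A}$-action and the $q(\mathsf{A})$-action preserves continuity of coarser topologies in the relevant direction; the direction I actually use (continuity of $\widetilde{\alpha}$ forces continuity of $\alpha$) needs only that $\pi$ is continuous, so the argument is a clean transport of structure and no estimates beyond those encapsulated in Proposition~\ref{prop:rmncomp} are needed. The openness of $\pi=q|_{\mathsf{A}}$ (as already used in Lemma~\ref{lem:atilcon}) is what guarantees the reverse implication, so that $q(\mathsf{A})$-minimality is genuinely equivalent to, rather than merely implied by, the $\mathsf{A}$-minimality of $\mathsf{N}$. Lemmas~\ref{lemma:comp} and \ref{lem:cmaps} would instead furnish the ingredients for a self-contained direct verification, should one prefer to bypass Fact~\ref{f:rel-min=G-min}.
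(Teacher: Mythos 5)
Your proposal is correct and follows essentially the same route as the paper's own proof: pull the coarser Hausdorff group topology $\sigma$ back along the topological isomorphism $q|_{\mathsf{N}}$ of Lemma~\ref{lem:opmap}, use the commutation relation $q|_\mathsf{N}\circ\alpha=\widetilde\alpha\circ(q|_\mathsf{A}\times q|_\mathsf{N})$ together with continuity of $q|_{\mathsf{A}}$ to see that $\alpha$ stays continuous for the pulled-back topology, and then invoke the $\mathsf{A}$-minimality of $\mathsf{N}$ (Proposition~\ref{prop:rmncomp} plus Fact~\ref{f:rel-min=G-min}) to conclude. The extra observations you make (that $Z\subseteq\mathsf{A}$ and that openness of $q|_{\mathsf{A}}$ yields a genuine equivalence) are correct but not needed for the implication actually proved.
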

\begin{proof}
	By Lemma \ref{lem:atilcon}, the action $\widetilde{\alpha}$ is $(\widetilde{\tau_p}|_{q(\mathsf{A})}, \widetilde{\tau_p}|_{q(\mathsf{N})}, \widetilde{\tau_p}|_{q(\mathsf{N})})$-continuous.
	Let $\sigma\subseteq \widetilde{\tau_p}|_{q(\mathsf{N})}$ be a coarser Hausdorff group topology such that $\widetilde{\alpha}$ is $(\widetilde{\tau_p}|_{q(\mathsf{A})}, \sigma, \sigma)$-continuous. We have to show that $\sigma=\widetilde{\tau_p}|_{q(\mathsf{N})}.$ 
	
	Let us see that $\alpha$ is $(\tau_p|_\mathsf{A}, (q|_\mathsf{N})^{-1}(\sigma), (q|_\mathsf{N})^{-1}(\sigma))$-continuous. Indeed, this follows from the equality \[q|_\mathsf{N}\circ\alpha=\widetilde\alpha\circ(q|_\mathsf{A}\times q|_\mathsf{N})\] and the 
	$(\widetilde{\tau_p}|_{q(\mathsf{A})}, \sigma, \sigma)$-continuity of  $\widetilde{\alpha}$. 
	Since $q|_\mathsf{N}$ is an injection  
	 and $\sigma$ is a Hausdorff group topology on $q(\mathsf{N})$, then clearly 
	$(q|_\mathsf{N})^{-1}(\sigma)\subseteq \tau_p|_\mathsf{N}$ is a coarser Hausdorff group topology on $\mathsf{N}$. By Proposition \ref{prop:rmncomp} and Fact  \ref{f:rel-min=G-min}, $\mathsf{N}$ is $\mathsf{A}$-minimal with respect to the action $\alpha.$ In particular, we deduce that 
	$(q|_\mathsf{N})^{-1}(\sigma)= \tau_p|_\mathsf{N}$. 
	This implies  that  $\sigma=\widetilde{\tau_p}|_{q(\mathsf{N})},$ 
	which completes the proof.
\end{proof}

Using Fact  \ref{f:rel-min=G-min}, 
 we immediately obtain:
\begin{corollary}\label{cor:rmqsut} 
Let $\F$ be a  non-discrete locally retrobounded complete field. 
Then the subgroup $q(\mathsf{N})$ is relatively minimal in $\STP(n,\F)/Z$.
\end{corollary}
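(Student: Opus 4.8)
The plan is to deduce this directly from the immediately preceding Proposition together with the equivalence between $G$-minimality and relative minimality recorded in Fact~\ref{f:rel-min=G-min}; no new argument is needed beyond assembling these ingredients into the correct form.

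First I would invoke the identification set up in Subsection~\ref{sbs:act}: since the intersection $q(\mathsf{A}) \cap q(\mathsf{N})$ is trivial and, by Lemma~\ref{lem:atilcon}, the induced action $\widetilde{\alpha}$ is continuous, the quotient $\SUT(n,\F)/Z$ is topologically isomorphic to the topological semidirect product $q(\mathsf{N}) \leftthreetimes_{\widetilde{\alpha}} q(\mathsf{A})$, equipped with the quotient topology $\widetilde{\tau_p}$. In particular $(q(\mathsf{N}), \widetilde{\tau_p}|_{q(\mathsf{N})})$ is a $q(\mathsf{A})$-group in exactly the sense demanded by Fact~\ref{f:rel-min=G-min}.

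Next, the Proposition immediately above asserts precisely that, under the standing hypothesis that $\F$ is a non-discrete locally retrobounded complete field, the group $q(\mathsf{N})$ is $q(\mathsf{A})$-minimal with respect to $\widetilde{\alpha}$. Applying the implication $(1)\Rightarrow(2)$ of Fact~\ref{f:rel-min=G-min} with $X = q(\mathsf{N})$, $G = q(\mathsf{A})$, and $M = q(\mathsf{N}) \leftthreetimes_{\widetilde{\alpha}} q(\mathsf{A}) \cong \SUT(n,\F)/Z$, we conclude that $q(\mathsf{N})$ is relatively minimal in this semidirect product, that is, in $\SUT(n,\F)/Z$, as claimed.

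Because every step is a verbatim invocation of a statement already established, there is no genuine obstacle here. The only point requiring care is to confirm that the hypotheses of Fact~\ref{f:rel-min=G-min} are genuinely met — namely that the decomposition of $\SUT(n,\F)/Z$ as a semidirect product is \emph{topological} and that $\widetilde{\alpha}$ is a \emph{continuous} action by automorphisms — and both of these are furnished by Lemmas~\ref{lem:opmap} and~\ref{lem:atilcon}, which is why the corollary can be recorded as an immediate consequence.
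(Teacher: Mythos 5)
Your proposal is correct and follows exactly the paper's route: the paper records this corollary as an immediate consequence of the preceding proposition ($q(\mathsf{N})$ is $q(\mathsf{A})$-minimal under $\widetilde{\alpha}$) combined with Fact~\ref{f:rel-min=G-min}, using the identification of $\SUT(n,\F)/Z$ with $q(\mathsf{N})\leftthreetimes_{\widetilde{\alpha}} q(\mathsf{A})$ from Subsection~\ref{sbs:act}. Your added check that the semidirect decomposition is topological and the action continuous (via Lemmas~\ref{lem:opmap} and~\ref{lem:atilcon}) is exactly the verification the paper relies on implicitly.
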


\vskip 0.3cm 
\subsection{When $\F$ is a local field}

  It is easy to see that for every $1\leq i\leq n-1,$ there exists a continuous \textit{central retraction} $r$ from 
  $q(\mathsf{N})$ to its $q(\mathsf{A})$-subgroup  $q(G_{i,i+1}).$ 
This means that $r(q(x)q(a)q(x)^{-1})=q(a)$ for every $x \in \mathsf{N}$ and $a \in G_{i,i+1}$. 

\vskip 0.3cm
The following fact will be used to prove Theorem \ref{thm:qbycen} which provides sufficient conditions for the minimality of $\STP(n,\F)/Z.$

\begin{fact} \cite[Proposition 2.7]{MEG95} \label{fac:min95}
	Let $M=(X \rtimes_{\alpha} G, \gamma)$ be a topological semidirect
	product and $\{Y_i\}_{i \in I}$ be a system of $G$-subgroups in
	$X$ such that the system of actions
	$$
	\{\a|_{G \times Y_i} \colon  G \times Y_i \to Y_i\}_{i \in I}\
	$$
	is \textit{t-exact} 
	(that is, there is no strictly coarser 
	(not necessarily Hausdorff) group topology on $G$ such that all actions remain continuous). 
	Suppose that for each $i \in I$ there exists a
	continuous central retraction $q_i \colon  X \to Y_i$. Then if $\gamma_1
	\subseteq \gamma$ is a coarser group topology on $M$ such that $\gamma_1|_X
	=\gamma|_X,$ then $\gamma_1=\gamma$.
\end{fact}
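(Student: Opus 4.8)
The plan is to reduce the statement to Merson's Lemma (Fact~\ref{firMer}) applied to the normal subgroup $X$ of $M$. Since we are given $\gamma_1|_X=\gamma|_X$, it suffices to prove that the quotient topologies agree, i.e. $\gamma_1/X=\gamma/X$ on $M/X\cong G$; then Fact~\ref{firMer} yields $\gamma_1=\gamma$. Write $\pi\colon M\to M/X\cong G$ for the canonical projection. Because $\gamma_1\subseteq\gamma$, the quotient topologies satisfy $\gamma_1/X\subseteq\gamma/X$, and both are (not necessarily Hausdorff) group topologies on $G$ since $X$ is normal. Moreover $\gamma/X$ coincides with the original topology $\gamma|_G$ on $G$, so by \emph{t-exactness} of the system $\{\alpha|_{G\times Y_i}\}$ it is minimal among the group topologies on $G$ for which every action $\alpha|_{G\times Y_i}\colon G\times (Y_i,\gamma|_{Y_i})\to (Y_i,\gamma|_{Y_i})$ is continuous. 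Hence the whole argument comes down to checking that $\gamma_1/X$ is another such topology: once each $\alpha|_{G\times Y_i}$ remains continuous for $(G,\gamma_1/X)$, t-exactness forces $\gamma_1/X=\gamma/X$.

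To produce this continuity I would exploit the central retractions. For each $i$ consider the map
\[
\phi_i\colon M\times Y_i\to Y_i,\qquad \phi_i(m,y)=q_i(m y m^{-1}).
\]
Since $\gamma_1$ is a group topology, conjugation $M\times X\to X$ is $\gamma_1$-continuous; its restriction to $M\times Y_i$ lands in $X$ and, because $\gamma_1|_X=\gamma|_X$, is continuous into $(X,\gamma|_X)$. Composing with the continuous retraction $q_i\colon (X,\gamma|_X)\to (Y_i,\gamma|_{Y_i})$ shows that $\phi_i$ is continuous with respect to $\gamma_1$ on the first factor and $\gamma|_{Y_i}$ on the second.

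The key computation is that, writing $m=(x,g)\in X\leftthreetimes_{\alpha} G$ and $y\in Y_i$, one has $m y m^{-1}=x\,\alpha(g,y)\,x^{-1}$ inside $X$ with $\alpha(g,y)\in Y_i$ (as $Y_i$ is a $G$-subgroup). Applying $q_i$ and using its centrality $q_i(x z x^{-1})=q_i(z)$ for $z\in Y_i$, together with $q_i|_{Y_i}=\mathrm{id}$, gives
\[
\phi_i(m,y)=q_i\bigl(x\,\alpha(g,y)\,x^{-1}\bigr)=\alpha(g,y)=\alpha(\pi(m),y),
\]
so $\phi_i=\alpha\circ(\pi\times\mathrm{id}_{Y_i})$. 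Now I would push continuity through the quotient: the projection $\pi\colon (M,\gamma_1)\to (G,\gamma_1/X)$ is an open continuous surjection, hence $\pi\times\mathrm{id}_{Y_i}$ is a quotient map onto $(G,\gamma_1/X)\times (Y_i,\gamma|_{Y_i})$. Since $\phi_i$ is continuous and factors through this quotient map, $\alpha|_{G\times Y_i}$ is continuous for $(G,\gamma_1/X)$ and $(Y_i,\gamma|_{Y_i})$. As this holds for every $i$, t-exactness yields $\gamma_1/X=\gamma/X$, and Fact~\ref{firMer} gives $\gamma_1=\gamma$.

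I expect the main obstacle to be the identity $\phi_i=\alpha\circ(\pi\times\mathrm{id})$: one must verify that conjugation in the semidirect product reduces, after applying the central retraction, exactly to the $G$-action on $Y_i$. This is precisely where the hypotheses are used — centrality of $q_i$ erases the inner $X$-conjugation factor $x(\cdot)x^{-1}$, while the retraction property recovers $\alpha(g,y)$ unchanged — and it is what allows the continuous map $\phi_i$ to descend through $\pi$ to the action. Everything else (the reduction via Merson's Lemma, the inclusion $\gamma_1/X\subseteq\gamma/X$, and the quotient-map continuity argument) is formal.
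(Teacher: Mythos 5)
Your proof is correct. There is nothing in the paper to compare it against: the statement is quoted as a Fact, with the proof deferred to \cite[Proposition 2.7]{MEG95}, so the paper contains no internal argument for it. Your reconstruction is a complete and valid proof, and it follows the route one would expect from the cited source: reduce via Merson's Lemma (Fact \ref{firMer}) applied to the normal subgroup $X$, compute $mym^{-1}=x\,\alpha(g,y)\,x^{-1}$ for $m=(x,g)$ so that the central retraction collapses conjugation to $\alpha(\pi(m),y)$, push continuity through the open continuous surjection $\pi\times\mathrm{id}_{Y_i}$ (hence a quotient map), and invoke t-exactness to get $\gamma_1/X=\gamma/X$. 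You also handle correctly the two delicate points: the topology $\gamma_1/X$ need not be Hausdorff (since $X$ need not be $\gamma_1$-closed), which is precisely why the t-exactness hypothesis is formulated for not-necessarily-Hausdorff topologies; and the centrality property $q_i(xzx^{-1})=z$ for $z\in Y_i$, $x\in X$ is exactly what is needed to make $\phi_i$ factor through $\pi\times\mathrm{id}_{Y_i}$, matching the definition of central retraction given in Section \ref{s:3} of the paper.
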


The proof of the next proposition was inspired by the proof of  the total minimality of $\SL(2,\R)$ given in \cite[Theorem 7.4.1]{DPS89}.
\begin{proposition}	\label{pro:locf}
Let $\F$ be a local field and $n\geq 2$. 
Then the system of $n-1$ actions 
 $$\{\widetilde{\alpha_i}:(q(\mathsf{A}),\widetilde{\tau_p})\times (q(G_{i,i+1}),\widetilde{\tau_p})\to (q(G_{i,i+1}),\widetilde{\tau_p})| \ \ \ i \in \{1, \cdots, n-1\}\}$$ is $t$-exact.
\end{proposition}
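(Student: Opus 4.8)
By definition (Fact \ref{fac:min95}) the system is $t$-exact precisely when $\widetilde{\tau_p}$ is the coarsest group topology on $q(\mathsf{A})$ for which all the $\widetilde{\alpha_i}$ remain continuous. My first step is to identify each action with scalar multiplication by a character. Using the topological isomorphism $q|_{\mathsf{N}}$ (Lemma \ref{lem:opmap}) I identify $q(G_{i,i+1})$ with $\F$, and Lemma \ref{lemma:comp} then shows that $\widetilde{\alpha_i}(q(B),\cdot)$ acts on $q(G_{i,i+1})\cong\F$ as multiplication by the scalar $\chi_i(q(B)):=p_{i,i}(B)\,p_{i+1,i+1}(B)^{-1}$. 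This quantity is unchanged when $B$ is multiplied by an element of $Z=\{\lambda I:\lambda\in\mu_n\}$, so $\chi_i\colon q(\mathsf{A})\to\F^{\times}$ is a well-defined continuous homomorphism. Since multiplication $\F^{\times}\times\F\to\F$ is continuous, for any group topology $\sigma$ on $q(\mathsf{A})$ the action $\widetilde{\alpha_i}$ is continuous if and only if $\chi_i\colon(q(\mathsf{A}),\sigma)\to\F^{\times}$ is continuous (for the forward direction, evaluate the action at the vector $1\in\F$, i.e. at $q(E_{i,i+1})$). Hence the coarsest group topology making all $\widetilde{\alpha_i}$ continuous is exactly the initial group topology induced by the family $(\chi_i)_{i=1}^{n-1}$, and since each $\chi_i$ is $\widetilde{\tau_p}$-continuous, $t$-exactness is equivalent to the assertion that
$$\rho=(\chi_1,\dots,\chi_{n-1})\colon q(\mathsf{A})\to(\F^{\times})^{n-1}$$
is a topological embedding.

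Next I would verify that $\rho$ is an injective continuous homomorphism. Continuity is clear. For injectivity, suppose $\rho(q(B))=\rho(q(B'))$, with diagonal entries $a_i=p_{i,i}(B)$ and $a_i'=p_{i,i}(B')$; then $a_i/a_{i+1}=a_i'/a_{i+1}'$ for all $i$, so $a_i'=c\,a_i$ for a single scalar $c\in\F^{\times}$. The determinant condition $\prod_i a_i=\prod_i a_i'=1$ forces $c^n=1$, i.e. $cI\in Z$, whence $q(B)=q(B')$. Thus $\rho$ is a continuous bijection of $q(\mathsf{A})$ onto the subgroup $I:=\rho(q(\mathsf{A}))\leq(\F^{\times})^{n-1}$.

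It remains to prove that $\rho$ is open onto $I$; this is where local compactness enters. I would consider the continuous homomorphism $f:=\rho\circ q|_{\mathsf{A}}\colon\mathsf{A}\to(\F^{\times})^{n-1}$, $f(B)=(a_i a_{i+1}^{-1})_{i}$. Writing $(\psi_i)=f(B)$ one recovers $a_i=a_1\prod_{j<i}\psi_j^{-1}$, and imposing $\prod_i a_i=1$ shows that a tuple $(\psi_i)$ lies in $I$ if and only if a fixed continuous expression in the $\psi_i$ is an $n$-th power in $\F^{\times}$; hence $I=g^{-1}\big((\F^{\times})^n\big)$ for a continuous homomorphism $g\colon(\F^{\times})^{n-1}\to\F^{\times}$. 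Since $\F$ is a local field, $(\F^{\times})^n$ is a \emph{closed} subgroup of $\F^{\times}$ (for non-archimedean $\F$ one has $\F^{\times}\cong\Z\times\mathcal{O}^{\times}$ and $(\F^{\times})^n=n\Z\times(\mathcal{O}^{\times})^n$ with $(\mathcal{O}^{\times})^n$ compact; for $\F\in\{\R,\C\}$ it is clopen). Therefore $I$ is closed, hence locally compact. As $\mathsf{A}\cong(\F^{\times})^{n-1}$ is locally compact and $\sigma$-compact, the open mapping theorem for topological groups applies to $f\colon\mathsf{A}\twoheadrightarrow I$ and yields that $f$ is open. Because $q|_{\mathsf{A}}$ is an open surjection (recall $q^{-1}(q(\mathsf{A}))=\mathsf{A}$), it follows that $\rho$ is open onto $I$, and being a continuous open bijection it is a homeomorphism. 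Thus $\rho$ is an embedding and the proof is complete.

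The main obstacle is precisely this last openness step. The delicate point, and the reason the hypothesis that $\F$ be a genuine local field (rather than merely non-discrete, locally retrobounded and complete) cannot be dropped here, is that in positive characteristic $(\F^{\times})^n$ need \emph{not} be open when $\operatorname{char}\F$ divides $n$; what rescues the argument is that it is nonetheless closed, so that $I$ is locally compact and the open mapping theorem—whose hypotheses require exactly the local compactness and $\sigma$-compactness supplied by the local field $\F$—can be invoked.
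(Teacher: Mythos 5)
Your proof is correct, but it takes a genuinely different route from the paper's. The paper argues directly with an arbitrary coarser group topology $\sigma$ on $q(\mathsf{A})$ admitting continuous actions: it pulls $\sigma$ back to $\mathsf{A}$, invokes Lemma \ref{lem:cmaps}(2) to get continuity of the $n$-th power maps $m_i=(p_{i,i})^n$ (which, like your $\chi_i$, descend to the quotient, here as maps $\widehat{m_i}$), and then runs a net argument: any net with $q(\eps_\alpha)\to q(I)$ in $\sigma$ satisfies $(p_{i,i}(\eps_\alpha))^n\to 1$, so the diagonal entries remain bounded in absolute value, whence there is a $\sigma$-neighborhood of $q(I)$ contained in a compact subset of $q(\mathsf{A})$, forcing $\sigma=\widetilde{\tau_p}$. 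You instead exploit that the adjacent ratios $\chi_i=p_{i,i}\,p_{i+1,i+1}^{-1}$ descend to $q(\mathsf{A})$, recast $t$-exactness as the assertion that $\rho=(\chi_1,\dots,\chi_{n-1})$ is a topological embedding into $(\F^{\times})^{n-1}$, and prove this via injectivity (determinant condition plus the center), closedness of the image (as $g^{-1}\bigl((\F^{\times})^n\bigr)$ with $g(\psi)=\prod_j \psi_j^{\,n-j}$, using that the $n$-th powers form a closed subgroup of $\F^{\times}$), and the open mapping theorem for $\sigma$-compact locally compact groups. All your steps check out: the reduction to the initial topology of the $\chi_i$ is valid (including for non-Hausdorff coarser topologies, since injectivity of $\rho$ makes the initial topology Hausdorff), the telescoping identity $g\circ f=(p_{1,1})^n$ does give $I=g^{-1}\bigl((\F^{\times})^n\bigr)$, and the OMT hypotheses hold. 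Both arguments hinge on local compactness, but yours buys a sharper structural statement --- $q(\mathsf{A})$ is topologically isomorphic to a closed subgroup of $(\F^{\times})^{n-1}$ --- and makes explicit why completeness plus local retroboundedness would not suffice here; its cost is the reliance on the open mapping theorem (a Baire-category input the paper avoids) and on the closedness of $(\F^{\times})^n$, a fact the paper establishes independently, and only later, as Lemma \ref{lem: fstcom}(1), so your proof is self-contained only because you reprove that closedness inline. The paper's proof is the more elementary of the two: nets plus compactness, recycling Lemma \ref{lem:cmaps}, with no appeal to an open mapping principle.
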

\begin{proof}
Recall that $\F$ admits an absolute value $|\cdot |$.  Let $\sigma\subseteq \widetilde \tau_p$ be a coarser  group topology on $q(\mathsf{A})$  such that all $n-1$ actions $$\widetilde{\alpha_i} \colon (q(\mathsf{A}),\sigma)\times (q(G_{i,i+1}),\widetilde{\tau_p})\to (q(G_{i,i+1}),\widetilde{\tau_p})$$  are continuous. This implies that the $n-1$ actions $$\alpha_i \colon (\mathsf{A},q^{-1}(\sigma))\times (G_{i,i+1},\tau_p)\to (G_{i,i+1},\tau_p)$$  are continuous. By Lemma \ref{lem:cmaps}(2), the homomorphism
$$m_i=(p_{i,i})^n \colon  (\mathsf{A},q^{-1}(\sigma))\to \F^{\times}$$
 is continuous. If $q(B)=q(C)$ then $B^{-1}C=\lambda I$, where $\lambda^n=1.$ It follows  
that the map $\widehat{m_i} \colon (q(\mathsf{A}),\sigma)\to  \F^{\times}$ defined by $\widehat{m_i}\circ q=m_i$ is well-defined and continuous for every $1\leq i\leq n$.   Consider an arbitrary net 
$\{\eps_\alpha\}_\alpha$ in
$\mathsf{A}$, 
 such that $\lim q(\eps_\alpha)=q(I)$ in $\sigma$. By the continuity of $\widehat{m_i}$, we deduce that $\lim(p_{i,i})^n(\eps_\alpha)=1$. In particular,   the nets $\{p_{i,i}(\eps_\alpha)\}$, where $1\leq i\leq n$, are bounded with respect to the absolute value.   Hence, there exists a $\sigma$-neighborhood $V$ of $q(I)$ that is contained in a compact subset of $q(\mathsf{A})$. This implies that $\sigma=\widetilde \tau_p$. 
\end{proof}

\vskip 0.1cm
\begin{theorem}\label{thm:qbycen}
Let $\F$ be a local field. 
Then $\STP(n,\F)/Z$ is minimal for every $n\in\N$.
\end{theorem} 
\begin{proof}
Clearly, we may assume that $n\geq 2$. By Corollary \ref{cor:rmqsut}, the subgroup 	$q(N)$ is relatively minimal in $q(\STP(n,\F))= \STP(n,\F)/Z$.
By Proposition \ref{pro:locf}, the system of $n-1$ actions $$\{\widetilde{\alpha_i} \colon (q(\mathsf{A}),\widetilde{\tau_p})\times (q(G_{i,i+1}),\widetilde{\tau_p})\to (q(G_{i,i+1}),\widetilde{\tau_p})\}$$
 is $t$-exact. Using Fact \ref{fac:min95} we complete the proof. 
\end{proof}   

 In case $G/L$ is sup-complete for every closed normal subgroup $L$ of $G,$ then $G$ is called {\it totally sup-complete}. In particular, if $G$ is  either a compact group or a sup-complete  (topologically) simple group, then
it is totally sup-complete.

\begin{fact} \cite[Theorem 7.3.1]{DPS89}\label{fac:cns}
	Let $G$ be a topological group and let $L$ be a closed  normal subgroup  of $G$ which is (totally) sup-complete. If $L$ and $G/L$ are both (totally) minimal, then $G$ is (totally) minimal, too.
\end{fact}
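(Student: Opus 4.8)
The plan is to derive the statement from Merson's Lemma (Fact~\ref{firMer}), treating the plain minimality case first and then bootstrapping to total minimality by passing to quotients. Throughout, sup-completeness is used for one purpose only: to guarantee that $L$ (or its image) remains \emph{closed} under any coarsening, so that the relevant quotient topology stays Hausdorff and the minimality of $G/L$ becomes applicable.

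\textbf{Minimality.} Let $\gamma$ be the given topology on $G$ and let $\gamma_1 \subseteq \gamma$ be an arbitrary coarser Hausdorff group topology; the goal is to show $\gamma_1 = \gamma$. First I would restrict to $L$: the trace $\gamma_1|_L$ is a coarser Hausdorff group topology on $L$, so minimality of $L$ forces $\gamma_1|_L = \gamma|_L$. Now I would invoke sup-completeness. Since $(L,\gamma|_L) = (L,\gamma_1|_L)$ is complete in its two-sided uniformity, and a subgroup complete in its two-sided uniformity is closed in any Hausdorff group containing it as a topological subgroup, the subgroup $L$ is $\gamma_1$-closed in $G$. Hence the quotient topology $\gamma_1/L$ on $G/L$ is Hausdorff, and it is coarser than $\gamma/L$; minimality of $G/L$ then gives $\gamma_1/L = \gamma/L$. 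Having matched both $\gamma_1|_L = \gamma|_L$ and $\gamma_1/L = \gamma/L$, I would conclude $\gamma_1 = \gamma$ by Merson's Lemma (Fact~\ref{firMer}).

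\textbf{Total minimality.} For the total version I would reduce every quotient to the minimality case just proved. Fix a closed normal subgroup $N$ of $G$; it suffices to show that $G/N$ is minimal. Write $q\colon G \to G/N$ for the quotient map and put $\bar L := q(L) = LN/N$. The map $q$ induces a continuous isomorphism $L/(L\cap N)\to \bar L$ onto a Hausdorff group; since $L$ is totally minimal, $L/(L\cap N)$ is minimal, so this continuous isomorphism is automatically a topological isomorphism. Thus $\bar L$ is topologically isomorphic to a quotient of $L$, hence minimal and---by total sup-completeness of $L$---sup-complete, and therefore closed in $G/N$. Consequently $LN = q^{-1}(\bar L)$ is closed in $G$, and the standard isomorphism-theorem identifications give $(G/N)/\bar L \cong G/LN \cong (G/L)/(LN/L)$, which is a quotient of the totally minimal group $G/L$ and hence minimal. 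Applying the minimality case to the closed, sup-complete normal subgroup $\bar L$ of $G/N$---with both $\bar L$ and $(G/N)/\bar L$ minimal---shows that $G/N$ is minimal. As $N$ was arbitrary, $G$ is totally minimal.

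\textbf{Main obstacle.} The one genuinely delicate point is the topological identification $LN/N \cong L/(L\cap N)$, since a priori the canonical continuous bijection need not be open, and without it one cannot transfer minimality and sup-completeness to $\bar L$. The resolution I would stress is that minimality of $L/(L\cap N)$ upgrades that continuous isomorphism to a topological one at no cost; completeness then promotes $\bar L$ to a \emph{closed} subgroup of $G/N$, which is precisely what keeps $(G/N)/\bar L$ Hausdorff so that total minimality of $G/L$ can be fed into the argument.
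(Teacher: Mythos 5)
This statement appears in the paper only as a quoted Fact, cited to \cite[Theorem 7.3.1]{DPS89}; the paper gives no proof of its own, so there is no internal argument to compare against. Your proof is correct and is essentially the classical argument behind that citation, built from exactly the toolkit the paper itself quotes: for minimality, restricting the coarser Hausdorff topology to $L$ forces equality there, sup-completeness (completeness in the two-sided uniformity, which is inherited as the induced uniformity on a subgroup) keeps $L$ closed in the coarser topology so that the quotient stays Hausdorff, and Merson's Lemma (Fact~\ref{firMer}) glues the two equalities together; for total minimality, the reduction through $\bar L = LN/N$ and the topological identifications $(G/N)/\bar L \cong G/LN \cong (G/L)/(LN/L)$ feed back into the minimality case. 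The delicate point you flag---that the canonical continuous isomorphism $L/(L\cap N)\to LN/N$ need not a priori be open, and that minimality of the Hausdorff quotient $L/(L\cap N)$ is precisely what upgrades it to a topological isomorphism, after which sup-completeness of that quotient (this is where \emph{total} sup-completeness of $L$ is used) makes $\bar L$ closed---is handled correctly; indeed the same device is used explicitly in the paper for the isomorphism $\phi\colon\PSL(n,\F)\to \widetilde{\PSL(n,\F)}$ in Proposition~\ref{pro:tmcocomp}.
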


Since $Z=Z(\SL(n,\F))$ is finite  and using Fact \ref{fac:cns} and Theorem \ref{thm:qbycen}, we 
obtain 
one of our main results:
\begin{theorem} \label{thm:sutmin}
	Let $\F$ be a local field. 
	Then $\STP(n,\F)$ is minimal for every $n\in\N$.
\end{theorem} 
One can consider
the topological group $\TM(n,\F)$  of lower triangular $n\times n$ matrices over $\F$ and its subgroup  $\STM(n,\F)= \TM(n,\F)\cap \SL(n,\F).$ It is easy to see that $\STP(n,\F)$ is topologically isomorphic to $\STM(n,\F)$. So, Theorem \ref{thm:sutmin}  immediately implies:

\begin{corollary}
	Let $\F$ be a local field.
Then $\STM(n,\F)$ is minimal for every $n\in\N$.
\end{corollary}

\vskip 0.5cm   
\section{Minimality properties of $\SL(n,\F)$ and $\PGL(n,\F)$} \label{s:4}

It is known that an archimedean local field is either the field of reals $\R$ or the field of complex numbers $\C.$

The following Iwasawa decomposition of $\SL(n,\F)$ (see \cite{AHL16,B,PR,Tits}) plays a key role in proving Theorem \ref{t:SLSMALL}.

\begin{fact}\label{fac:iwasawa}
	Let $\F$ be a local field. Then there exists a compact subgroup $K$ of $\SL(n,\F)$ such that  $\SL(n,\F)= \STP(n,\F)K$. In particular, \ben
	\item if $\F=\R$, then $K$ is the orthogonal group $\O(n,\R)$;
	\item if $\F=\C$, then $K$ is the special unitary group  $\SU(n,\C)$;
	\item if $\F$ is non-archimedean, then $K=\SL(n,\mathcal O_{\F})$, where $\mathcal O_{\F}$ is the ring of integers of $\F$, 
	namely, $\mathcal O_{\F}=\{a\in \F: |a|\leq 1\}$.
	\een
\end{fact}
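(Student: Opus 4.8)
Since the decomposition is classical, the plan is to recall the standard argument and adapt it uniformly to all local fields. The starting observation is that $\SUT(n,\F)$ is precisely the stabilizer in $\SL(n,\F)$ of the standard complete flag $\langle e_1\rangle \subset \langle e_1,e_2\rangle \subset \cdots \subset \langle e_1,\dots,e_{n-1}\rangle$, because the stabilizer of this flag in $\GL(n,\F)$ consists of the upper triangular invertible matrices, whose intersection with $\SL(n,\F)$ is exactly $\SUT(n,\F)$. Hence $\SL(n,\F)/\SUT(n,\F)$ is identified with the variety of complete flags of $\F^n$. Since $\SUT(n,\F)$ and $K$ are subgroups and $\SL(n,\F)$ is closed under inversion, the identity $(\SUT(n,\F)K)^{-1}=K\,\SUT(n,\F)$ shows that $\SL(n,\F)=\SUT(n,\F)K$ is equivalent to $\SL(n,\F)=K\,\SUT(n,\F)$, and the latter is in turn equivalent to the assertion that $K$ acts transitively on the flag variety. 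So the whole statement reduces to two points: (i) each candidate $K$ is compact, and (ii) $K$ is transitive on complete flags.

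Compactness (i) is immediate in each case. In the archimedean case $\O(n,\R)$ and $\SU(n,\C)$ are closed and norm-bounded in $\R^{n^2}$, respectively $\C^{n^2}$, hence compact. In the non-archimedean case $\mathcal O_{\F}=\{a\in\F:|a|\le 1\}$ is closed and bounded, so it is compact by the stated criterion; therefore $\mathcal O_{\F}^{\,n^2}$ is compact, and $\SL(n,\mathcal O_{\F})$, being a closed subset (cut out by the continuous conditions ``entries in $\mathcal O_{\F}$'' and ``$\det=1$''), is compact as well.

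For transitivity (ii) in the archimedean case I would use Gram--Schmidt orthonormalization with respect to the standard Euclidean (for $\F=\R$) or Hermitian (for $\F=\C$) inner product: given a flag, choose a basis adapted to it and orthonormalize it to a basis $w_1,\dots,w_n$ which is still adapted; the matrix with columns $w_i$ is orthogonal (unitary) and carries the standard flag to the given one. Rescaling $w_n$ by a scalar of modulus one leaves the flag unchanged and normalizes the determinant to $1$, placing the transformation in the stated special orthogonal, resp.\ special unitary, group. The non-archimedean case is the substantive one, and I expect it to be the main obstacle: here Euclidean orthogonalization has no analogue and must be replaced by a lattice argument. The claim I would prove is that every complete flag admits a \emph{unimodular adapted basis}, i.e.\ a basis $w_1,\dots,w_n$ of $\F^n$ that is simultaneously adapted to the flag and an $\mathcal O_{\F}$-basis of $\mathcal O_{\F}^{\,n}$. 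Granting this, the matrix with columns $w_i$ lies in $\GL(n,\mathcal O_{\F})$; rescaling $w_n$ by the inverse of its (unit) determinant, which does not move the flag, lands it in $\SL(n,\mathcal O_{\F})$ and again sends the standard flag to the given one.

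To build such a basis I would argue by induction on $n$, exploiting that $\mathcal O_{\F}$ is a complete discrete valuation ring (a local PID). The line $V_1$ of the flag contains a \emph{primitive} vector $w_1\in\mathcal O_{\F}^{\,n}$: scale any nonzero vector of $V_1$ by its coordinate of least valuation so that all coordinates lie in $\mathcal O_{\F}$ and at least one is a unit. Over a local ring a primitive vector extends to an $\mathcal O_{\F}$-basis of $\mathcal O_{\F}^{\,n}$, so $w_1$ spans an $\mathcal O_{\F}$-direct summand; passing to the quotient lattice $\mathcal O_{\F}^{\,n}/\mathcal O_{\F} w_1\cong\mathcal O_{\F}^{\,n-1}$ with its induced flag and applying the inductive hypothesis produces the remaining $w_2,\dots,w_n$. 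The two delicate points, on which I would spend the most care, are precisely this compatibility between the flag and the lattice $\mathcal O_{\F}^{\,n}$ (existence of the primitive generator and its extension to a lattice basis) and the bookkeeping of determinants needed to keep the compact factor inside $\SL$; everything else is a formal consequence of the flag-transitivity reformulation in the first paragraph.
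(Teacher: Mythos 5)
The paper gives no proof of this statement at all: it is recorded as a Fact and attributed to the literature on the Iwasawa decomposition (the citations \cite{AHL16,B,PR,Tits}), so there is no internal argument to compare yours with; what matters is whether your blind proof is sound, and it is. The reduction in your first paragraph is valid: $\SUT(n,\F)$ is the stabilizer in $\SL(n,\F)$ of the standard complete flag, the identity $(\SUT(n,\F)K)^{-1}=K\,\SUT(n,\F)$ converts the claimed factorization into $\SL(n,\F)=K\,\SUT(n,\F)$, and the latter follows once $K$ is transitive on complete flags, since then for $g\in\SL(n,\F)$ one finds $k\in K$ with $k^{-1}g$ fixing the standard flag, i.e.\ $k^{-1}g\in\SUT(n,\F)$. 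Compactness of the three candidates is exactly the closed-and-bounded criterion the paper itself records for local fields. Gram--Schmidt settles $\R$ and $\C$, and in the non-archimedean case the two points you flag as delicate do hold over the valuation ring $\mathcal O_{\F}$: a primitive $w_1\in\mathcal O_{\F}^{\,n}$ extends to an $\mathcal O_{\F}$-basis (adjoin the standard basis vectors other than the one in the coordinate where $w_1$ has a unit entry; the resulting determinant is a unit), and $\mathcal O_{\F}^{\,n}\cap\F w_1=\mathcal O_{\F}w_1$ because that unit coordinate forces the scalar into $\mathcal O_{\F}$, so the quotient lattice is free of rank $n-1$, carries the induced flag, and the induction closes; finally, rescaling the last column by the unit $\det(W)^{-1}$ keeps the matrix in $\GL(n,\mathcal O_{\F})$ and puts it in $\SL(n,\mathcal O_{\F})$ without moving the flag. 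One place where your proof is actually more careful than the statement being proved: $\O(n,\R)$ is not a subgroup of $\SL(n,\R)$ (it contains matrices of determinant $-1$), so item (1) cannot be read literally; your determinant normalization produces elements of $\SO(n,\R)=\O(n,\R)\cap\SL(n,\R)$, which is the compact subgroup for which the decomposition genuinely holds, and which of course implies the version stated. In short, the paper buys brevity by citing; your argument buys a self-contained, uniform proof that replaces Bruhat--Tits or classification machinery in the non-archimedean case by elementary lattice algebra.
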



Recall that a subgroup $H$ of a topological group $G$ is said to be \emph{co-compact} if the coset space $G / H$ is compact. 
If $G=KH$ (equivalently, $G=HK$) for some compact subset $K$ of $G$ and a subgroup $H$, then 
 $H$ is co-compact in $G$. Indeed, let $q: G \to G/H, x \mapsto xH$ be the natural projection. Then its restriction on $K$ is onto because $G=KH$. So, $q(K)=G/H$ is also compact.  Since $Z$ is finite, we obtain the following as a corollary of Fact \ref{fac:iwasawa}:
\begin{corollary}\label{cor:cocomp}
	Let $\F$ be a local field. Then $\STP(n,\F)/Z$ is co-compact in $\PSL(n,\F)$.
\end{corollary}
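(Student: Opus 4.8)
The plan is to push the Iwasawa decomposition of $\SL(n,\F)$ (Fact \ref{fac:iwasawa}) down to the quotient $\PSL(n,\F) = \SL(n,\F)/Z$ and then apply the elementary criterion recalled just above the statement: if a topological group $G$ factors as $G = HK$ with $H$ a subgroup and $K$ a compact subset, then $H$ is co-compact in $G$.

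First I would record that the center $Z = \{\lambda I : \lambda \in \mu_n\}$ consists of scalar diagonal matrices of determinant $1$, so that $Z \subseteq \mathsf{A} \subseteq \SUT(n,\F)$. This containment is what makes $\SUT(n,\F)/Z$ a genuine subgroup of $\PSL(n,\F)$: writing $q \colon \SL(n,\F) \to \PSL(n,\F)$ for the quotient homomorphism, we have $q(\SUT(n,\F)) = \SUT(n,\F)/Z$ because $Z$ is precisely the kernel of $q$ and already sits inside $\SUT(n,\F)$.

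Next, starting from the factorization $\SL(n,\F) = \SUT(n,\F)\, K$ furnished by Fact \ref{fac:iwasawa} with $K$ compact, I would apply the continuous surjective homomorphism $q$ to both sides. Since $q$ respects products of subsets and $Z \subseteq \SUT(n,\F)$, this yields
\[
\PSL(n,\F) = q(\SUT(n,\F))\, q(K) = \bigl(\SUT(n,\F)/Z\bigr)\cdot q(K),
\]
where $q(K)$ is compact, being the continuous image of the compact set $K$.

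Finally, taking $H := \SUT(n,\F)/Z$ and the compact set $q(K)$, the displayed factorization is exactly the hypothesis of the co-compactness criterion, so $\SUT(n,\F)/Z$ is co-compact in $\PSL(n,\F)$. I do not anticipate a real obstacle here; the argument is essentially immediate once Iwasawa is in hand, and the only point warranting a line of care is the containment $Z \subseteq \SUT(n,\F)$, which guarantees that the abstract quotient subgroup and its image under $q$ coincide and is what the finiteness-and-scalar form of $Z$ provides.
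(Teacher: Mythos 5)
Your proof is correct and is essentially the paper's own argument: the paper likewise combines the Iwasawa decomposition $\SL(n,\F)=\SUT(n,\F)K$ with the criterion that $G=HK$ for compact $K$ forces $H$ to be co-compact, passing to the quotient by the finite center $Z$. Your explicit check that $Z\subseteq\mathsf{A}\subseteq\SUT(n,\F)$, so that $q(\SUT(n,\F))=\SUT(n,\F)/Z$, merely spells out a detail the paper leaves implicit.
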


A subgroup $H$ of a Hausdorff topological group $(G, \tau)$ is called \textit{strongly closed}, \cite{DM10} if $H$ is
$\sigma$-closed for every Hausdorff group topology $\sigma \subseteq \tau$ on $G$.

	\vskip 0.2cm
\begin{theorem} \label{pro:pslntm} Let $\F$ be a local 
 	 field. 
 	  Then $\SL(n,\F)$ and the projective special linear group
 	   $\PSL(n,\F)=\SL(n,\F)/Z(\SL(n,\F))$ are totally minimal for every $n\in \N.$ 
\end{theorem}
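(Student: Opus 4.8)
The plan is to prove total minimality of $\PSL(n,\F)$ first, and then lift this to $\SL(n,\F)$ via the finite central extension. For $\PSL(n,\F)$, I would combine the two structural facts already established: by Theorem \ref{thm:qbycen}, the subgroup $\SUT(n,\F)/Z$ is minimal, and by Corollary \ref{cor:cocomp}, it is co-compact in $\PSL(n,\F)$. The idea is that a minimal, co-compact subgroup should force minimality of the ambient group, provided we have enough rigidity. Indeed, suppose $\sigma \subseteq \tau$ is a coarser Hausdorff group topology on $\PSL(n,\F)$. The restriction of $\sigma$ to $\SUT(n,\F)/Z$ is a coarser Hausdorff group topology on a minimal group, hence equals the original topology there; so $\SUT(n,\F)/Z$ is $\sigma$-closed (using that it is sup-complete, being a continuous image of the sup-complete $\SUT(n,\F)$ under the finite quotient). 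Then the $\sigma$-coset space $\PSL(n,\F)/(\SUT(n,\F)/Z)$ is Hausdorff and is a continuous image of the original co-compact (hence compact) coset space, so it too is compact. Applying Fact \ref{fac:cns} with $L=\SUT(n,\F)/Z$ — which is sup-complete, minimal, and has minimal (even compact) quotient — would yield that $\sigma=\tau$.

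The cleaner route, however, is to invoke algebraic simplicity. By Fact \ref{fac:simple}, $\PSL(n,\F)$ is algebraically simple whenever $n>2$, or $n=2$ with $|\F|>3$ (which holds for any infinite local field). For a topologically simple group, minimality and total minimality coincide, since the only proper quotient is trivial. So I would argue: any coarser Hausdorff group topology $\sigma$ on $\PSL(n,\F)$ restricts to the full topology on the minimal, sup-complete subgroup $L=\SUT(n,\F)/Z$, making $L$ both $\sigma$-closed and $\tau$-co-compact; combining $\sigma$-closedness with co-compactness shows $L$ remains co-compact in $(\PSL(n,\F),\sigma)$, and then Merson's Lemma (Fact \ref{firMer}) or Fact \ref{fac:cns} closes the gap to force $\sigma=\tau$. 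Total minimality then follows because every nontrivial quotient of the simple group $\PSL(n,\F)$ is the group itself.

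To transfer total minimality from $\PSL(n,\F)$ to $\SL(n,\F)$, I would apply Fact \ref{fac:cns} to the central extension $Z=Z(\SL(n,\F)) \to \SL(n,\F) \to \PSL(n,\F)$. The center $Z=\{\lambda I : \lambda \in \mu_n\}$ is finite, hence compact, hence totally sup-complete and totally minimal. Since $\PSL(n,\F)$ is totally minimal by the previous step, Fact \ref{fac:cns} gives total minimality of $\SL(n,\F)$ directly. The small cases require care: for $n=1$ both groups are trivial, and for $n=2$ with $|\F|\le 3$ simplicity fails, but every local field is infinite so $|\F|>3$ always holds and Fact \ref{fac:simple} applies.

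The main obstacle I anticipate is establishing that a minimal sup-complete co-compact subgroup genuinely forces minimality of the whole group — specifically, verifying that co-compactness is preserved under passing to the coarser Hausdorff topology $\sigma$. This hinges on the subgroup $L=\SUT(n,\F)/Z$ being $\sigma$-closed, which in turn requires its sup-completeness (so that its $\sigma$-restriction, equal to the original topology by minimality, is complete and therefore closed in the Hausdorff $(\PSL(n,\F),\sigma)$). Once $L$ is $\sigma$-closed, the continuous surjection from the compact $\tau$-coset space onto the $\sigma$-coset space makes the latter compact, and then either Fact \ref{fac:cns} or the simplicity argument applies. I expect the bookkeeping around which completeness and Hausdorffness hypotheses are invoked at each stage to be the delicate part, rather than any deep new idea.
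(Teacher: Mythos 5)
Your ``cleaner route'' is essentially the paper's own proof: reduce total minimality of $\PSL(n,\F)$ to minimality via algebraic simplicity (Fact \ref{fac:simple}); take $L=\SUT(n,\F)/Z$, which is minimal (Theorem \ref{thm:qbycen}), hence relatively minimal, and sup-complete, hence $\sigma$-closed under any coarser Hausdorff topology; deduce co-minimality from co-compactness (Corollary \ref{cor:cocomp}) by comparing compact Hausdorff coset topologies; conclude minimality from Fact \ref{MER}; and finally lift to $\SL(n,\F)$ by Fact \ref{fac:cns} applied to the finite center. One correction: in both places where you offer Fact \ref{fac:cns} as an alternative tool for the $\PSL(n,\F)$ step, that application is invalid, since Fact \ref{fac:cns} requires $L$ to be a closed \emph{normal} subgroup, and $\SUT(n,\F)/Z$ is not normal in $\PSL(n,\F)$ --- the coset space $\PSL(n,\F)/L$ is not a group, so ``minimal (even compact) quotient'' does not parse there. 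The step must go through Merson's Lemma (Fact \ref{firMer}) or its corollary Fact \ref{MER}, which are stated for arbitrary subgroups and coset topologies, exactly as in your relatively-minimal-plus-co-minimal formulation; with that route the bookkeeping you flag (Hausdorffness of the $\sigma$-coset space from $\sigma$-closedness of $L$, compactness of the $\sigma$-coset space as a continuous image of the $\tau$-compact one, and equality of comparable compact Hausdorff topologies) goes through exactly as you anticipate, and is how the paper argues.
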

\begin{proof}
We may assume that $n\geq 2.$	By 
Fact \ref{fac:simple}, 
 $\PSL(n,\F)$ is simple 
so it suffices to prove that $\PSL(n,\F)$ is minimal.
 By Theorem \ref{thm:qbycen}, $G :=\STP(2,\F)/Z(\SL(n,\F))$ is minimal. So, in particular, $G$ is relatively minimal in $\PSL(n,\F)$. Furthermore, $G$ is also 
	sup-complete 
since $G$ is locally compact. 
	So we obtain that $G$ is strongly closed. Then the subgroup $G$ is also co-minimal in $\PSL(n,\F)$, being co-compact by  Iwasawa decomposition. 	It follows from Fact \ref{MER} that  $\PSL(n,\F)$ is minimal.	 By Fact \ref{fac:cns}, $\SL(n,\F)$ is  totally minimal.
\end{proof} 

\begin{remark} \label{rem:BG} Bader and Gelander (\cite[Corollary 5.3]{BG}) recently proved that every separable  quasi-semisimple group is totally minimal. 
	Then by \cite[Ch. I, Proposition (1.2.1)]{MA} every Zariski-connected 
		semi-simple group (e.g., $\SL(n,\F)$) over a local field $\F$ is quasi-semisimple. 
		It follows that for every 
		local field $\F$ (so also when $\mathrm{char}(\F)=2$) the 
		groups $\SL(n,\F)$ and 	$\PSL(n,\F)$ are totally minimal.
\end{remark}

The following concept has a key role in the Total Minimality Criterion. 
\begin{definition}
A subgroup $H$ of a topological group $G$ is totally dense if for every closed normal subgroup
$L$ of $G$ the intersection $L \cap H$ is dense in $L.$	
\end{definition} 

\begin{fact}\label{fact:tmc} \cite[Total Minimality Criterion]{DP}
 Let $H$ be a dense subgroup of a topological group $G$. Then $H$ is
	totally minimal if and only if $G$ is totally minimal and $H$ is totally dense in $G$.
\end{fact}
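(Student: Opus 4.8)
The plan is to reduce this ``total'' statement to the ordinary Minimality Criterion (Fact~\ref{Crit}) by passing to the quotients $G/N$ for closed normal subgroups $N\trianglelefteq G$, while exploiting the defining feature of minimality: a continuous isomorphism from a minimal group onto a Hausdorff topological group is automatically a topological isomorphism. The technical heart is a dictionary between closed normal subgroups of $H$ and of $G$. First I would record that if $M\trianglelefteq H$ is closed, then its closure $N:=\overline{M}$ in $G$ is a closed normal subgroup of $G$ (normality passes from $H$ to $G$ since $H$ is dense and conjugation is continuous), that $M=N\cap H$ (because $M$ is closed in the subspace $H$), and that $M$ is dense in $N$; conversely, for closed normal $N\trianglelefteq G$ the set $N\cap H$ is closed normal in $H$. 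In every case the canonical map $\phi\colon H/(N\cap H)\to q(H)=HN/N\subseteq G/N$ is a continuous bijective homomorphism onto a dense subgroup of $G/N$. I would then isolate an \emph{openness lemma}: if $N\cap H$ is dense in $N$, then $q|_H\colon H\to q(H)$ is open onto its image, so $\phi$ is a \emph{topological} isomorphism. Its proof is a short density argument showing $q(\widetilde U\cap H)=q(\widetilde U)\cap q(H)$ for open $\widetilde U\subseteq G$, using that each coset $h_0(N\cap H)$ is dense in $h_0 N$; the hypothesis is essential (it can fail, e.g. for $H=\Q^2$ inside $G=\R^2$ modulo an irrational line).

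For the implication giving total minimality of $H$ from total minimality of $G$ together with total density, I would fix a closed normal $M\trianglelefteq H$ and set $N=\overline{M}$. Here $N\cap H=M$ is dense in $N$, so the openness lemma makes $\phi\colon H/M\to q(H)$ a topological isomorphism. Total minimality of $G$ gives that $G/N$ is minimal, and total density yields essentiality of $q(H)$ in $G/N$: any nontrivial closed normal subgroup of $G/N$ has the form $L/N$ with $L\supsetneq N$, where $H\cap L$ is dense in $L$, so $H\cap L\not\subseteq N$ and hence $q(H)\cap(L/N)=(H\cap L)N/N$ is nontrivial. By Fact~\ref{Crit}, $q(H)$ is minimal, and therefore so is $H/M$. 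As $M$ was arbitrary, $H$ is totally minimal.

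For the converse I would assume $H$ is totally minimal and prove the two conclusions in the right order. To see that $G$ is totally minimal, fix closed normal $N\trianglelefteq G$ and put $M=N\cap H$; then $H/M$ is minimal, so the continuous isomorphism $\phi\colon H/M\to q(H)$ is automatically a topological isomorphism and $q(H)$ is a minimal dense subgroup of $G/N$. The ``only if'' part of Fact~\ref{Crit} then forces $G/N$ to be minimal, so $G$ is totally minimal. Total density is proved last, now that $G$ is known to be totally minimal: given closed normal $L\trianglelefteq G$, let $N=\overline{H\cap L}\subseteq L$ and suppose $N\subsetneq L$. As above $q(H)$ is minimal and dense in the minimal group $G/N$, hence essential by Fact~\ref{Crit}; applying essentiality to the nontrivial subgroup $L/N$ gives $H\cap L\not\subseteq N$, contradicting $H\cap L\subseteq\overline{H\cap L}=N$. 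Thus $N=L$ and $H\cap L$ is dense in $L$.

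I expect the main obstacle to be the openness lemma together with the correct sequencing of the two implications. The topological identification $H/M\cong q(H)$ is obtained in two genuinely different ways—from the density of $N\cap H$ in $N$ in the first implication, but from minimality of the domain $H/M$ in the second—and in the converse one must establish total minimality of $G$ before attempting total density, since the essentiality argument invoked for total density already presupposes that each $G/N$ is minimal. Verifying the openness lemma carefully (and seeing why its density hypothesis cannot be dropped) is the step most likely to require attention.
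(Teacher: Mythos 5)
The paper does not actually prove this statement: it is quoted as a \emph{Fact} with a citation to \cite{DP}, so there is no internal proof to compare yours against. Your argument is, however, a correct and complete reconstruction of the classical Dikranjan--Prodanov criterion, and it follows what is essentially the standard route: the dictionary between closed normal subgroups $M\trianglelefteq H$ and their closures $N=\overline{M}\trianglelefteq G$ (normality of $\overline{M}$ in $G$ does follow from density of $H$ and continuity of conjugation, and $M=\overline{M}\cap H$ because $M$ is closed in $H$); the openness lemma stating that density of $N\cap H$ in $N$ forces $H/(N\cap H)\cong HN/N$ topologically; and the reduction of each Hausdorff quotient to the Minimality Criterion (Fact \ref{Crit}). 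I checked the openness lemma in detail: the identity $q(\widetilde U\cap H)=q(\widetilde U)\cap q(H)$ for open $\widetilde U\subseteq G$ does follow from density of each coset $h_0(N\cap H)$ in $h_0N$, and your $\Q^2\subset\R^2$ example correctly shows the density hypothesis cannot be dropped. Both implications are sound, including the two genuinely different mechanisms by which $H/(N\cap H)\to q(H)$ is shown to be a topological isomorphism (openness lemma in one direction, minimality of the domain in the other). One small remark: your final concern about sequencing in the converse is overstated. Since Fact \ref{Crit} is an equivalence, minimality of the dense subgroup $q(H)$ in $G/N$ (with $N=\overline{H\cap L}$) simultaneously yields minimality of $G/N$ \emph{and} essentiality of $q(H)$, so total density could be established directly, without first proving $G$ totally minimal; your chosen order is of course also valid.
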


In the sequel we no longer assume that $\mathrm{char}(\F)\neq 2$ in view of Remark \ref{rem:BG}.
  
  \vskip 0.2cm 
\begin{theorem}\label{t:SLSMALL} 
	Let $\F$ be a 
	subfield of a local field.
 Then the following conditions are equivalent:\ben \item $\PSL(n,\F)$ is totally minimal; \item   $\SL(n,\F)$ is totally minimal; \item
	 $Z(\SL(n,\F))=Z(\SL(n,\widehat{\F}))$ 
	 (i.e., $\mu_n(\F)=\mu_n(\widehat{\F})$).  
	 \een 
\end{theorem}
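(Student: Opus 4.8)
The plan is to relate $\SL(n,\F)$ to $\SL(n,\widehat\F)$ and to read condition (3) off from a total density requirement. Set $Z:=Z(\SL(n,\F))$ and $\widehat Z:=Z(\SL(n,\widehat\F))$. Since every $n$-th root of unity in $\F$ is one in $\widehat\F$, we always have $Z\subseteq\widehat Z$; moreover a scalar matrix $\lambda I$ belongs to $\SL(n,\F)$ precisely when $\lambda\in\F$, so that $\widehat Z\cap\SL(n,\F)=Z$. We may assume $n\geq 2$, the case $n=1$ being trivial, and that $\F$ is non-discrete; then $\widehat\F$, being a closed non-discrete (hence locally compact) subfield of the ambient local field, is itself a local field, in particular an infinite field.

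For the equivalence (1)$\Leftrightarrow$(2) I would use that $Z$ is finite, hence compact, and therefore totally sup-complete and totally minimal. If $\SL(n,\F)$ is totally minimal then so are all of its quotients, in particular $\PSL(n,\F)=\SL(n,\F)/Z$; conversely, if $\PSL(n,\F)$ is totally minimal, then Fact \ref{fac:cns} applied to the closed normal subgroup $Z$ of $\SL(n,\F)$ (whose quotient is $\PSL(n,\F)$) shows that $\SL(n,\F)$ is totally minimal.

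For (2)$\Leftrightarrow$(3) I would first record two ingredients: $\SL(n,\F)$ is dense in $\SL(n,\widehat\F)$, since $\F$ is dense in $\widehat\F$ and $\SL$ over a field is generated by transvections $I+tE_{ij}$ depending continuously on $t$; and $\SL(n,\widehat\F)$ is totally minimal by Remark \ref{rem:BG}. By the Total Minimality Criterion (Fact \ref{fact:tmc}), $\SL(n,\F)$ is then totally minimal if and only if it is totally dense in $\SL(n,\widehat\F)$, i.e. $L\cap\SL(n,\F)$ is dense in $L$ for every closed normal subgroup $L$ of $\SL(n,\widehat\F)$. The structural input is that, as $\widehat\F$ is infinite, $\PSL(n,\widehat\F)$ is simple by Fact \ref{fac:simple} and $\SL(n,\widehat\F)$ is perfect, whence the only closed normal subgroups of $\SL(n,\widehat\F)$ are the subgroups of $\widehat Z$ together with $\SL(n,\widehat\F)$ itself. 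For $L=\SL(n,\widehat\F)$ total density is just the density noted above; for a subgroup $L\subseteq\widehat Z$, which is finite and discrete, density of $L\cap\SL(n,\F)$ in $L$ amounts to $L\subseteq\SL(n,\F)$. Thus total density holds if and only if $\widehat Z\subseteq\SL(n,\F)$, equivalently $\widehat Z=\widehat Z\cap\SL(n,\F)=Z$, which is exactly (3).

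The main obstacle is the classification of the closed normal subgroups of $\SL(n,\widehat\F)$: one must combine the simplicity of $\PSL(n,\widehat\F)$ with the perfectness of $\SL(n,\widehat\F)$ to exclude any normal subgroup lying strictly between $\widehat Z$ and the whole group, using that an abelian quotient of a perfect group is trivial (an $L$ with nontrivial simple image satisfies $L\widehat Z=\SL(n,\widehat\F)$, so $\SL(n,\widehat\F)/L$ is abelian and hence trivial). Once this is in hand, matching the total density condition against (3) reduces, via the single closed normal subgroup $L=\widehat Z$, to the inclusion $\widehat Z\subseteq\SL(n,\F)$; combining this with (1)$\Leftrightarrow$(2) closes the cycle of implications.
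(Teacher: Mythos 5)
Your proof is correct, and while it shares the paper's frame --- passing to the completion $\widehat{\F}$, using Fact \ref{fac:cns} plus stability of total minimality under quotients for $(1)\Leftrightarrow(2)$, and invoking the Total Minimality Criterion (Fact \ref{fact:tmc}) --- the heart of your argument runs along a genuinely different line. The paper proves a cycle $(1)\Rightarrow(2)\Rightarrow(3)\Rightarrow(1)$: its $(2)\Rightarrow(3)$ is exactly the easy half of your total-density analysis (the single closed normal subgroup $L=Z(\SL(n,\widehat{\F}))$ already witnesses the failure of total density when (3) fails), and its $(3)\Rightarrow(1)$ is carried out at the projective level: under (3), $\PSL(n,\F)$ is a dense subgroup of $\PSL(n,\widehat{\F})$, which is totally minimal (Theorem \ref{pro:pslntm}) and algebraically simple (Fact \ref{fac:simple}), and in an algebraically simple group density trivially implies total density, so Fact \ref{fact:tmc} applies with no normal-subgroup analysis at all. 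You instead stay at the $\SL$ level throughout and obtain $(2)\Leftrightarrow(3)$ in one stroke, at the price of classifying the normal subgroups of $\SL(n,\widehat{\F})$ (central, or everything); that classification needs the perfectness of $\SL(n,\widehat{\F})$ on top of the simplicity of $\PSL(n,\widehat{\F})$ --- a standard fact, valid here since $\widehat{\F}$ is infinite, but an ingredient the paper never has to invoke. Your route buys two things: it sidesteps the point, silent in the paper, that under (3) the quotient topology of $\PSL(n,\F)$ agrees with its subspace topology inside $\PSL(n,\widehat{\F})$ (this rests on $q^{-1}(q(\SL(n,\F)))=\SL(n,\F)\cdot Z(\SL(n,\widehat{\F}))=\SL(n,\F)$, i.e.\ precisely on (3)); and it makes explicit the density of $\SL(n,\F)$ in $\SL(n,\widehat{\F})$ via transvections, which the paper uses without comment. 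The paper's route buys a lighter algebraic toolkit and shorter individual implications. The only loose end on your side is the reduction ``we may assume $\F$ is non-discrete'': it deserves the remark that a discrete subfield of a local field is necessarily finite (in the non-archimedean case its valuation is trivial, so it embeds into the finite residue field; archimedean local fields have no discrete subfields since every subfield contains $\Q$), whence $\SL(n,\F)$ is compact and all three conditions hold trivially --- but this is on par with the paper's own level of detail, which likewise ignores the degenerate case.
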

\begin{proof} If $\F$ is finite, then $\F=\widehat{\F}$ and conditions $(1), (2),(3)$ are all satisfied. So we may assume that $\F$ is infinite and $\widehat{\F}$
	is a local field in view of Remark \ref{r:standing}(2).
	
$(1)\Rightarrow (2)$: Use Fact \ref{fac:cns} with  $G=\SL(n,\F)$ and its finite subgroup $L=Z(\SL(n,\F)).$\\
$(2)\Rightarrow (3)$:  Let $G:=\SL(n,\widehat{\F}), \ H:=\SL(n,\F)$ and  suppose that $Z(\SL(n,\F)) \neq Z(\SL(n,\widehat{\F})).$ Then $L:=Z(\SL(n,\widehat{\F}))$ is a closed normal subgroup of $G,$ and $L\cap H=Z(\SL(n,\F))$ is not dense in $L$, being a finite proper subgroup of $L$.
So $\SL(n,\F)$ is not totally dense in $\SL(n,\widehat{\F})$. By the Total Minimality Criterion, we deduce that  $\SL(n,\F)$ is not totally minimal.\\
$(3)\Rightarrow (1)$: By Theorem \ref{pro:pslntm}, the group $\PSL(n,\widehat\F)$ is totally minimal. 
Since $Z(\SL(n,\F))$ $=Z(\SL(n,\widehat{\F})),$ we deduce that $\PSL(n,\F)$ is dense in $\PSL(n,\widehat\F)$.   As $\PSL(n,\widehat\F)$ is simple 
(Fact \ref{fac:simple}), 
its dense subgroup $\PSL(n,\F)$ is,  in fact, totally dense. By the Total Minimality Criterion,  $\PSL(n,\F)$ is also totally minimal.
\end{proof}

	\vskip 0.2cm
\begin{corollary}\label{cor:tmsln}
	Let $\F$ be a local field. 
	If $Z(\SL(n,\F))\subseteq \{I,-I\},$ then for every topological subfield $H$ of $\F$ the groups
	$\PSL(n,H)$ and $\SL(n,H)$ are totally minimal. In particular,\ben  \item $\SL(2,H)$ and $\PSL(2,H)$  are totally minimal for every topological subfield $H$ of $\F;$ 
 \item $\SL(n,H)$ and $\PSL(n,H)$  are totally minimal for every topological subfield $H$ of $\R.$\een 
\end{corollary}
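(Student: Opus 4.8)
The plan is to deduce Corollary~\ref{cor:tmsln} directly from Theorem~\ref{t:SLSMALL}, the equivalence already established for any subfield of a local field. The key observation is that the hypothesis $Z(\SL(n,\F))\subseteq\{I,-I\}$ is exactly strong enough to force condition~(3) of that theorem, namely the equality of centers $Z(\SL(n,H))=Z(\SL(n,\widehat H))$, for every topological subfield $H$ of $\F$.

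First I would fix a topological subfield $H$ of $\F$ and form its completion $\widehat H$, which by Remark~\ref{r:standing}(2) may be identified with the closure of $H$ inside $\F$; in particular $\widehat H$ is itself a subfield of $\F$ (indeed of the local field $\F$), so Theorem~\ref{t:SLSMALL} applies to $H$. Recall from the subsection on roots of unity that $Z(\SL(n,K))=\{\lambda I:\lambda\in\mu_n(K)\}$ for any field $K$, where $\mu_n(K)$ denotes the $n$-th roots of unity lying in $K$. Thus the center is completely controlled by which roots of unity the field contains, and the containments $H\subseteq\widehat H\subseteq\F$ give
\[
Z(\SL(n,H))\subseteq Z(\SL(n,\widehat H))\subseteq Z(\SL(n,\F))\subseteq\{I,-I\},
\]
the last inclusion being the hypothesis. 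Now $\pm I$ are roots of unity of order dividing $2$, hence lie in $\SL(n,H)$ for \emph{every} field $H$ of the relevant characteristic (and $-I\in\SL(n,H)$ precisely when $n$ is even, but in any case $-I$ either sits in all three centers or in none). Consequently the outer containments collapse: $Z(\SL(n,H))=Z(\SL(n,\widehat H))$, which is condition~(3). By Theorem~\ref{t:SLSMALL}, $\SL(n,H)$ and $\PSL(n,H)$ are totally minimal. This proves the main assertion.

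For the two itemized special cases I would simply verify the hypothesis $Z(\SL(n,\F))\subseteq\{I,-I\}$ in each situation. For~(1), take $n=2$: then $\mu_2(K)=\{\pm1\}$ for any field $K$ (of characteristic $\neq2$, and the characteristic~$2$ case is covered by Remark~\ref{rem:BG}), so $Z(\SL(2,\F))\subseteq\{I,-I\}$ automatically, whatever the local field $\F$; the conclusion for every topological subfield $H$ of $\F$ then follows from the main assertion, and since every such $H$ is a subfield of \emph{some} local field one obtains the statement uniformly. For~(2), take $\F=\R$: the only real $n$-th roots of unity are $\pm1$, so $\mu_n(\R)\subseteq\{\pm1\}$ and hence $Z(\SL(n,\R))\subseteq\{I,-I\}$ for every $n$; applying the main assertion with this $\F$ gives total minimality of $\SL(n,H)$ and $\PSL(n,H)$ for every topological subfield $H$ of $\R$ and every $n$.

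I expect the proof to be essentially a bookkeeping argument once the center computation $Z(\SL(n,K))=\{\lambda I:\lambda\in\mu_n(K)\}$ is invoked, so there is no serious obstacle. The one subtlety worth stating carefully is the identification of $\widehat H$ with a subfield sitting between $H$ and $\F$: this is what legitimizes the sandwiching inequality on centers and lets the hypothesis on $\F$ propagate down to each subfield $H$. The characteristic~$2$ cases, where the earlier arguments assumed $\mathrm{char}(\F)\neq2$, are handled by Remark~\ref{rem:BG}, so I would note that the conclusion holds irrespective of characteristic.
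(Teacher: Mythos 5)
Your proof is correct and follows exactly the route the paper intends (the corollary is stated without proof as an immediate consequence of Theorem~\ref{t:SLSMALL}): you verify condition~(3) of that theorem by sandwiching $Z(\SL(n,H))\subseteq Z(\SL(n,\widehat H))\subseteq Z(\SL(n,\F))\subseteq\{I,-I\}$ and noting that $-1$ lies in every field, so $-I$ belongs to all of these centers or to none, forcing $Z(\SL(n,H))=Z(\SL(n,\widehat H))$. The two itemized cases are then just the observations that $\mu_2(\F)\subseteq\{\pm1\}$ for any field and $\mu_n(\R)\subseteq\{\pm1\}$ for any $n$, exactly as you did.
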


\vskip 0.3cm 
\subsection{Total minimality of $\PGL(n,\F)$}
The next result is probably known. 
We prove it for the sake of 
completeness. Perhaps it can be derived also from the results of \cite[Section 26]{Lorenz}.
\begin{lemma} \label{lem: fstcom}
	Let $\F$ be a local field, 
	$n \in \N,$ and  $M_n=\{x^n| \ x\in\F^{\times}\} $.
	Then 
	\ben \item $M_n$ is closed in $\F^{\times}$; \item the group $\F^{\times}/M_n$ is compact.\een
\end{lemma}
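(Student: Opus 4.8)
The plan is to prove both parts by purely topological means, relying on the characterization that a subset of a local field is compact precisely when it is closed and bounded, rather than on any finite-index argument. A finite-index approach would in fact fail in positive characteristic, where $[\F^{\times}:M_n]$ can be infinite (for instance when $n=p=\mathrm{char}\,\F$); this is exactly why the statement asserts compactness rather than finiteness of $\F^{\times}/M_n$, so the proof must genuinely exploit compactness. Throughout I fix an absolute value $|\cdot|$ on $\F$ (which exists since $\F$ is a local field), write $V:=\{|x|:x\in\F^{\times}\}\subseteq\R_{>0}$ for its value group, and set $U:=\{x\in\F^{\times}:|x|=1\}$. Being closed and bounded, $U$ is compact.

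For part (1), I would show $M_n$ is closed by a net/subnet argument. Let $(x_\lambda^n)_\lambda$ be a net in $M_n$ converging to some $y\in\F^{\times}$. Applying the continuous map $|\cdot|$ and taking positive $n$-th roots in $\R_{>0}$ gives $|x_\lambda|\to|y|^{1/n}>0$, so the $x_\lambda$ eventually lie in an ``annulus'' $A=\{x:a\le|x|\le b\}$ with $0<a\le b$. This set is closed and bounded, hence compact, so $(x_\lambda)$ admits a subnet converging to some $x\in A\subseteq\F^{\times}$; continuity of $z\mapsto z^n$ then forces $x^n=y$, whence $y\in M_n$. (Equivalently, the $n$-th power map is proper, since preimages of annuli are annuli, and a continuous proper map into a locally compact Hausdorff space is closed.)

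For part (2), I would produce a compact set $D$ with $\F^{\times}=D\cdot M_n$; co-compactness of $M_n$ then follows exactly as in the observation preceding Corollary \ref{cor:cocomp} (the image of $D$ under $\F^{\times}\to\F^{\times}/M_n$ is the whole quotient, hence the quotient is compact). Since $|M_n|=V^n$, the key input is that $V/V^n$ is finite: if $\F$ is archimedean then $V=\R_{>0}$ is divisible and $V/V^n$ is trivial, while if $\F$ is non-archimedean then $V$ is infinite cyclic and $V/V^n$ has order $n$. Choosing finitely many $t_1,\dots,t_r\in\F^{\times}$ whose absolute values represent the cosets of $V^n$ in $V$, I claim $\F^{\times}=\bigcup_i t_i\,U\,M_n$: given $x$, pick $i$ with $|x|\,V^n=|t_i|\,V^n$; then $|x t_i^{-1}|\in V^n=|M_n|$, so there is $m\in M_n$ with $|m|=|x t_i^{-1}|$, and hence $x t_i^{-1}m^{-1}\in U$. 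Thus $\F^{\times}=D\cdot M_n$ with $D=\bigcup_i t_i U$ a finite union of translates of the compact set $U$, so $D$ is compact.

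The only genuinely delicate point is the one already flagged: one must resist arguing via openness of $M_n$ or finiteness of its index, since these can fail when $\mathrm{char}\,\F$ divides $n$. The argument above circumvents this entirely by absorbing the whole ``unit sphere'' $U$ into the compact factor $D$ and using only finiteness of $V/V^n$, which concerns the value group alone and is therefore insensitive to the characteristic.
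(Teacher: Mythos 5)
Your proof is correct. Part (1) is essentially the paper's own argument: the authors likewise note that the $n$-th roots form a bounded family, invoke the closed-and-bounded characterization of compactness to extract a convergent subsequence, and pass to the limit (they use sequences rather than nets, which suffices since $\F$ is metrizable); your remark about properness of the power map is just a repackaging of the same idea. Part (2) is where you genuinely diverge. The paper argues by cases: for $\F\in\{\R,\C\}$ it observes that $\F^{\times}/M_n$ has at most two elements, and in the non-archimedean case it writes $|x_m|=a^{t_m}$, divides the exponent $t_m$ by $n$, and multiplies by suitable $n$-th powers $\lambda_m^n$ to push coset representatives into the compact annulus $a^{n-1}\le |y|\le a^{1-n}$, finally proving \emph{sequential} compactness of the quotient by extracting convergent subsequences. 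You instead exhibit a single compact set $D=\bigcup_i t_i U$ (finitely many translates of the unit sphere, using finiteness of $V/V^n$) with $\F^{\times}=D\cdot M_n$, and conclude in one step because the quotient is the continuous image of $D$ --- precisely the cocompactness observation the paper records just before Corollary \ref{cor:cocomp}. Both arguments rest on the same inputs (the value group of a non-archimedean local field is infinite cyclic, so its quotient by $n$-th powers is finite, and closed bounded subsets are compact), but yours buys a uniform treatment of the archimedean and non-archimedean cases and sidesteps the paper's implicit appeal to the equivalence of sequential compactness and compactness for the quotient (harmless here by metrizability, but an extra step nonetheless). Your opening caveat is also well taken: in positive characteristic with $p\mid n$ the index $[\F^{\times}:M_n]$ can indeed be infinite (e.g. $n=p=\mathrm{char}\,\F$, where $M_p=\F_p((t^p))^{\times}$ inside $\F_p((t))^{\times}$), so neither proof could shortcut through openness or finite index of $M_n$, and compactness of the quotient is the right statement.
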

\begin{proof}
	Recall that the local field $\F$ admits an absolute value $|\cdot |$.\\
	$(1)$ Suppose that $\lim_{m\to \infty} (x_m)^n=y\in \F^{\times}$, where $\{x_m\}_{m\in \N}$ is a sequence contained in $\F^{\times}$. We have to show that
	$y\in M_n.$  Clearly, the sequence  $\{x_m\}_{m\in \N}$ is bounded with respect to the absolute value. Since $\F$ is a local field, there exists a 
	subsequence $\{x_{m_l}\}_{l\in \N}$ of $\{x_m\}_{m\in \N}$ such that $\lim_{l\to \infty} x_{m_l}=t$ for some $t\in \F.$ Since $\F$ is a Hausdorff topological field, it follows that $\lim_{l\to \infty} (x_{m_l})^n=t^n=y$. Clearly, $t\neq 0$ and we deduce that $y\in M_n.$
	
	$(2)$ It is easy to see that if $\F\in \{\R, \C\}$, then 
	$\F^{\times}/M_n$ is  
	a group with at most two elements.
	So, we may assume that $\F$ is a non-archimedean local field. 	In this case, 
	the value group (i.e., the set $\{|x|: \ |x|\neq 0\}$)
	is the  infinite cyclic closed subgroup $\{a^k| \ k\in \Z \}$ of $\R^{\times}$, where $a := \max\{|x| :|x|<1\}$ (see \cite{V}). Let $r \colon  \F^{\times}\to \F^{\times}/
	M_n$ be  the quotient map. It suffices to show that for every sequence $\{x_m\}_{m\in \N}\subseteq \F^{\times}$ there exists a subsequence $\{x_{m_l}\}_{l\in \N}$ such that the sequence $\{r(x_{m_l})\}_{l\in \N}$ converges in 
	$\F^{\times}/M_n.$
	For every $m\in \N,$ we have $|x_m|=a^{t_m}$ for some $t_m\in \Z$. There exist $s_m\in \Z$ and 
	$r_m\in [1-n,n-1]\cap\Z$ 
	with $t_m=ns_m+r_m$.
	Choose $\lambda_m\in  \F^{\times}$ with $|\lambda_m|=a^{-s_m}$. Letting $y_m= x_m\lambda_m^n,$ we obtain a  sequence $\{y_m\}_{m\in \N}$ with $r(x_m)=r(y_m)$ and 
	$a^{n-1}\leq|y_m|\leq a^{1-n}$ 
	for every $m\in \N.$   As the sequence $\{y_m\}_{m\in \N}$ is bounded and $\F$ is a local field, there exists a converging subsequence $\{y_{m_l}\}_{l\in \N}$. Since 
	$a^{n-1}\leq|y_m|$,  the limit is in $\F^{\times}$.
	As $r(x_m)=r(y_m)$ for every $m\in \N$ and using the continuity of $r,$ we deduce that the sequence $\{r(x_{m_l})\}_{l\in \N}$ converges in $\F^{\times}/M_n.$
\end{proof}

\vskip 0.3cm 
Recall that the center $Z(\GL(n,\F))$ is $\{\lambda I: \lambda \in \F^{\times}\}$.  Below  we denote it by $Z$.  
In the sequel, $\widetilde{\PSL(n,\F)}=q(\SL(n,\F))= (\SL(n,\F)\cdot Z)/Z$ 
is a  normal subgroup of $\PGL(n,\F)$, where $$q \colon \GL(n,\F)\to \GL(n,\F)/Z=\PGL(n,\F)$$ is the quotient map. The map $q$ induces a continuous  
isomorphism	
$$\phi:\PSL(n,\F)\to \widetilde{\PSL(n,\F)}.$$
 It is worth noting that  the second isomorphism theorem, which implies that $\phi$ is an algebraic isomorphism, does not hold in general for topological groups (see \cite[p. 14]{It}).

\vskip 0.2cm 
\begin{proposition}\label{pro:tmcocomp}
	Let $\F$ be a local field. 
	Then $\widetilde{\PSL(n,\F)}$ is a totally minimal totally sup-complete group  
	and 
	the factor group $\PGL(n,\F)/\widetilde{\PSL(n,\F)}$ is compact  Hausdorff.
\end{proposition}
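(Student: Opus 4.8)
The plan is to establish the proposition in three parts, treating the total minimality, the total sup-completeness, and the compactness of the quotient separately, and then assembling them.

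First I would handle the total minimality of $\widetilde{\PSL(n,\F)}$. We already know from Theorem \ref{pro:pslntm} that $\PSL(n,\F)$ is totally minimal, and the map $\phi\colon \PSL(n,\F)\to \widetilde{\PSL(n,\F)}$ is a continuous algebraic isomorphism. The subtlety here, flagged in the paragraph preceding the statement, is that $\phi$ need not be a topological isomorphism, since the second isomorphism theorem fails for topological groups in general. Thus I cannot simply transport total minimality across $\phi$ for free. The key observation is that total minimality of $\PSL(n,\F)$ means every continuous isomorphism out of every quotient is open; applied to $\phi$ itself (and to the induced maps on quotients of the simple group $\PSL(n,\widehat\F)$), this should force $\phi$ to be a topological isomorphism, after which $\widetilde{\PSL(n,\F)}$ inherits total minimality directly. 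Concretely, since $\PSL(n,\F)$ is totally minimal and $\phi$ is a continuous isomorphism onto the Hausdorff group $\widetilde{\PSL(n,\F)}$, $\phi$ is a topological isomorphism by the very definition of (total) minimality, and total minimality transfers.

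Next I would address total sup-completeness. Here I would use that $\F$ is a local field, so $\GL(n,\F)$ and all the matrix subgroups in sight are locally compact, and $\widetilde{\PSL(n,\F)}$, being (topologically isomorphic to) $\PSL(n,\F)=\SL(n,\F)/Z$ with $\SL(n,\F)$ locally compact and $Z$ finite, is itself locally compact. A locally compact group is sup-complete; moreover $\PSL(n,\F)$ is algebraically simple by Fact \ref{fac:simple} (for $n\geq 2$; the $n=1$ case is trivial). A sup-complete topologically simple group is totally sup-complete, as recorded just before Fact \ref{fac:cns}. So total sup-completeness follows from simplicity together with local compactness.

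Finally, for the compactness of $\PGL(n,\F)/\widetilde{\PSL(n,\F)}$, I would identify this quotient with a quotient of the multiplicative group via the determinant. Passing to the quotient by $Z$, the cokernel $\PGL(n,\F)/\widetilde{\PSL(n,\F)}$ is naturally isomorphic to $\GL(n,\F)/(\SL(n,\F)\cdot Z)$, and the determinant induces a map onto a quotient of $\F^{\times}$; since $\det(Z)=\det(\{\lambda I\})=\{\lambda^n\}=M_n$ and $\det(\SL(n,\F))=\{1\}$, the relevant quotient is $\F^{\times}/M_n$, which is compact by Lemma \ref{lem: fstcom}(2). That the induced isomorphism is a homeomorphism follows because the determinant is a continuous open surjection and the quotient maps are open. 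The main obstacle I expect is precisely the topological (rather than merely algebraic) identification of the quotient $\PGL(n,\F)/\widetilde{\PSL(n,\F)}$ with $\F^{\times}/M_n$ and of $\widetilde{\PSL(n,\F)}$ with $\PSL(n,\F)$: as the preceding remark warns, one must argue openness of the relevant maps by hand (using local compactness and the finiteness of $Z$) rather than invoking the second isomorphism theorem, so the care lies in the point-set topology of these quotients rather than in any deep structural input.
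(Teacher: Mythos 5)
Your proposal is correct and follows essentially the same route as the paper: total minimality is transported across the continuous isomorphism $\phi$ using the minimality of $\PSL(n,\F)$ (Theorem \ref{pro:pslntm}), total sup-completeness comes from local compactness plus algebraic simplicity (Fact \ref{fac:simple}), and the factor group is identified with $\F^{\times}/M_n$ via the third isomorphism theorem and Lemma \ref{lem: fstcom}. The only real divergence is the mechanism in the last step: you run the determinant \emph{downward}, inducing a continuous open bijection $\GL(n,\F)/(\SL(n,\F)\cdot Z)\to \F^{\times}/M_n$ (openness of $\det$ is correct but should be justified, e.g.\ via the continuous section $\lambda\mapsto \mathrm{diag}(\lambda,1,\dots,1)$, which splits $\GL(n,\F)$ topologically as $\SL(n,\F)\times\F^{\times}$), whereas the paper runs $\psi(\lambda)=\mathrm{diag}(\lambda,1,\dots,1)$ \emph{upward} and concludes that the induced continuous isomorphism $\F^{\times}/M_n\to \GL(n,\F)/(\SL(n,\F)\cdot Z)$ is topological because a continuous isomorphism from a compact group onto a Hausdorff group is open. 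One small hole in your write-up: you cite only Lemma \ref{lem: fstcom}(2), but the Hausdorffness asserted in the proposition still requires Lemma \ref{lem: fstcom}(1) (closedness of $M_n$, equivalently of $\SL(n,\F)\cdot Z=\det^{-1}(M_n)$); in the paper this is needed \emph{before} the compactness argument can even be made, while in your version it is needed at the end so that $\F^{\times}/M_n$, and hence the factor group, is Hausdorff. With that one citation added, your argument is complete.
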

\begin{proof}
	By Theorem \ref{pro:pslntm},  
	the group $\PSL(n,\F)$ is  totally minimal. Being locally compact and simple (see Fact \ref{fac:simple}),
	$\PSL(n,\F)$ is also totally sup-complete.
	Consider the continuous isomorphism 
	$\phi \colon \PSL(n,\F)\to \widetilde{\PSL(n,\F)}.$ Since $\PSL(n,\F)$ is totally minimal it follows that $\phi$ is in fact a topological group isomorphism, so  $\widetilde{\PSL(n,\F)}$ is 
	a totally minimal totally sup-complete group.  	
	
	As $\SL(n,\F)\cdot Z=\det^{-1}(M_n)$, Lemma  \ref{lem: fstcom}(1) implies that the factor group $$\GL(n,\F)/(\SL(n,\F)\cdot Z)$$ is Hausdorff.  
	The continuous homomorphism $$\psi \colon \F^{\times} \to \GL(n,\F), \ 
	\psi(\lambda)= \left(\begin{array}{cccc} 
	\lambda & 0 & \ldots & 	0 \\
	0 & 1 & \ddots  & \vdots\\
	\vdots & \ddots & \ddots & 0 \\
	0 & \ldots & 0 & 1
	\end{array} \right) $$
	induces a continuous isomorphism $\tilde{\psi}$ from 
	$\F^{\times}/M_n$ onto $\GL(n,\F)/(\SL(n,\F)\cdot Z)$.
	
	By  Lemma \ref{lem: fstcom}(2), $\F^{\times}/M_n$  is compact. 
	Hence $\tilde{\psi}$ is a topological group isomorphism. This proves that $\GL(n,\F)/(\SL(n,\F)\cdot Z)$ is compact.
	By the \textit{third isomorphism theorem for topological groups}
	(which is easy to verify for all topological groups; see \cite[Theorem 1.5.18]{AT} and \cite[Proposition 3.6]{It}),
	we have 
	$$\PGL(n,\F)/\widetilde{\PSL(n,\F)}=(\GL(n,\F)/Z)/((\SL(n,\F)\cdot Z)/Z) \cong \GL(n,\F)/(\SL(n,\F)\cdot Z),$$ 
	which completes the proof.
\end{proof}

\vskip 0.2cm

By Proposition \ref{pro:tmcocomp} and Fact  \ref{fac:cns}, we immediately obtain

\begin{theorem}\label{thm:ptm}
	Let $\F$ be a local field. 
	Then $\PGL(n,\F)$ is totally minimal.
\end{theorem}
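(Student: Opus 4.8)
The plan is to present $G:=\PGL(n,\F)$ as a group extension of a compact group by a well-behaved normal subgroup and then invoke the extension criterion Fact~\ref{fac:cns} in its \emph{total} form. Set $L:=\widetilde{\PSL(n,\F)}$. First I would record that $L$ is a closed normal subgroup of $G$. Normality is already noted in the paragraph preceding Proposition~\ref{pro:tmcocomp}, where $\widetilde{\PSL(n,\F)}=(\SL(n,\F)\cdot Z)/Z$ is identified as a normal subgroup of $\PGL(n,\F)$. Closedness follows from Proposition~\ref{pro:tmcocomp}, which asserts that the factor group $G/L=\PGL(n,\F)/\widetilde{\PSL(n,\F)}$ is Hausdorff: the kernel $L$ of the quotient map $G\to G/L$ is then the preimage of the (closed) singleton $\{eL\}$, hence closed.

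Next I would check the two hypotheses of Fact~\ref{fac:cns} for the pair $(G,L)$. By Proposition~\ref{pro:tmcocomp}, the subgroup $L=\widetilde{\PSL(n,\F)}$ is both totally sup-complete and totally minimal; this supplies exactly the sup-completeness and total-minimality requirements placed on the normal subgroup in the ``totally'' reading of Fact~\ref{fac:cns}. For the quotient, Proposition~\ref{pro:tmcocomp} gives that $G/L=\PGL(n,\F)/\widetilde{\PSL(n,\F)}$ is compact Hausdorff, and every compact Hausdorff group is totally minimal (as recalled at the start of the ``Some known results'' subsection). Thus both $L$ and $G/L$ are totally minimal.

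With these pieces assembled, Fact~\ref{fac:cns} applies verbatim in its parenthetical ``totally'' version: $L$ is a closed normal totally sup-complete subgroup of $G$, and $L$ together with $G/L$ are both totally minimal, whence $G=\PGL(n,\F)$ is totally minimal, as claimed. I do not expect a genuine obstacle at this stage, since the substantive analytic content---the total minimality and total sup-completeness of $\widetilde{\PSL(n,\F)}$, and the compactness of the factor $\F^{\times}/M_n\cong\GL(n,\F)/(\SL(n,\F)\cdot Z)$---has already been absorbed into Proposition~\ref{pro:tmcocomp} via the Iwasawa decomposition and Lemma~\ref{lem: fstcom}. The only point meriting a moment's care is matching the hypotheses of Fact~\ref{fac:cns} exactly: confirming that the compact Hausdorff factor is totally minimal (immediate from compactness) and that $L$ is sufficiently closed for the total version, which is guaranteed by its total sup-completeness together with the Hausdorffness of $G/L$.
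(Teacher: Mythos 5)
Your proposal is correct and follows essentially the same route as the paper, which deduces Theorem~\ref{thm:ptm} directly from Proposition~\ref{pro:tmcocomp} combined with Fact~\ref{fac:cns}. Your additional verifications (closedness of $\widetilde{\PSL(n,\F)}$ via Hausdorffness of the quotient, and total minimality of the compact factor) are exactly the details the paper leaves implicit in its one-line proof.
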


Very recently, U. Bader informed us that Theorem \ref{thm:ptm} follows also from \cite[Theorem 3.4]{BG}.



\begin{theorem} \label{pro:sofr} 
	Let $\F$ be a topological subfield of $\R$ and $n$ be an odd number.  Then $\PGL(n,\F)$ is totally minimal.
\end{theorem}
\begin{proof}
	By Corollary \ref{cor:tmsln}(2), $\PSL(n,\F)$ is totally minimal. It follows that $\widetilde{\PSL(n,\F)}$ is also totally minimal. To establish the total minimality of $\PGL(n,\F),$ it suffices to show, in view of Fact
	\ref{fact:tmc},  that $\widetilde{\PSL(n,\F)}$ is dense in $\PGL(n,\F).$ Let us see first that $M_n$ is dense in 
	$\F^{\times}.$ If $a\in \F^{\times}$, then $\sqrt[n] a\in \R$. As $\Q\subseteq \F,$ there exists a sequence  $\{x_m\}_{m\in \N}\subseteq \F^{\times}$ converging to
	$\sqrt[n] a$. So $\lim_{m\to \infty} (x_{m})^n=a$, which proves that $M_n$ is dense in 
	$\F^{\times}.$ The continuous homomorphism 
	$$\psi \colon \F^{\times} \to \GL(n,\F), \ 
	\psi(\lambda)= \left(\begin{array}{cccc} 
	\lambda & 0 & \ldots & 	0 \\
	0 & 1 & \ddots  & \vdots\\
	\vdots & \ddots & \ddots & 0 \\
	0 & \ldots & 0 & 1
	\end{array} \right) $$ 
	induces a continuous homomorphism $\hat{\psi}$ from 
	$\F^{\times}$ onto $\PGL(n,\F)$. Since $\hat{\psi}(M_n)=\widetilde{\PSL(n,\F)},$ we deduce that $\widetilde{\PSL(n,\F)}$ is dense in $\PGL(n,\F)$, as needed.
\end{proof}

\vskip 0.2cm 
\begin{question} \label{q:PGL-Q} 
	Let $\F$ be a topological subfield of $\R$ and $n$ be an even number.	Is $\PGL(n,\F)$ (totally) minimal? 
\end{question}

\vskip 0.5cm 
 \section{Fermat primes and minimality of special linear groups}

The next proposition may be viewed as a counterpart of Theorem  \ref{t:SLSMALL}.

\begin{proposition}\label{pro:minsl}
Let $\F$ be a 
	subfield of a local field. 
Then the following conditions are equivalent: \ben \item  $\SL(n,\F)$ is   minimal; \item any non-trivial central subgroup of $\SL(n,\widehat{\F})$  intersects $\SL(n,\F)$ non-trivially. \een
\end{proposition}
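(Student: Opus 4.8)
The plan is to prove the equivalence by relating minimality of $\SL(n,\F)$ to the essentiality of $\SL(n,\F)$ as a dense subgroup of $\SL(n,\widehat{\F})$, using the Minimality Criterion (Fact~\ref{Crit}). First I would observe that $\SL(n,\F)$ is dense in $G:=\SL(n,\widehat{\F})$, since $\F$ is dense in $\widehat{\F}$ and the determinant-one condition is preserved under passage to the closure; this uses Remark~\ref{r:standing}(2). By Theorem~\ref{pro:pslntm} applied to the local field $\widehat{\F}$, the ambient group $G$ is (totally) minimal. Hence by Fact~\ref{Crit}, $\SL(n,\F)$ is minimal if and only if it is essential in $G$, i.e. if and only if $\SL(n,\F)\cap L\neq\{I\}$ for every non-trivial closed normal subgroup $L$ of $G$.

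The core of the argument is then to reduce this essentiality condition to the one stated in item (2), which only quotes central subgroups. The key structural input is that $\PSL(n,\widehat{\F})=G/Z(G)$ is algebraically simple (Fact~\ref{fac:simple}, valid once $n\geq 2$ and $|\widehat{\F}|>3$, which holds automatically since $\widehat{\F}$ is a non-discrete local field). Therefore any non-trivial closed normal subgroup $L$ of $G$ must contain the whole center $Z(G)=Z(\SL(n,\widehat{\F}))$, \emph{unless} $L$ is itself central. More precisely, the image $q(L)$ of $L$ under the quotient $q\colon G\to G/Z(G)$ is a normal subgroup of the simple group $\PSL(n,\widehat{\F})$, so $q(L)$ is either trivial or everything; in the first case $L\subseteq Z(G)$ is central, and in the second case $L$ surjects onto $\PSL(n,\widehat{\F})$ and so $L\supseteq [G,G]=G$ (as $\SL$ is perfect for $n\geq 2$ over such fields), whence $L=G$ and trivially meets $\SL(n,\F)$ in the non-trivial subgroup $\SL(n,\F)$ itself.

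With this dichotomy in hand both implications follow quickly. For $(1)\Rightarrow(2)$: if $\SL(n,\F)$ is minimal, then it is essential in $G$; applying essentiality to an arbitrary non-trivial central subgroup $L$ of $G$ (which is automatically closed, being a subgroup of the finite group $Z(G)$) gives $\SL(n,\F)\cap L\neq\{I\}$, which is exactly condition (2). For $(2)\Rightarrow(1)$: assuming (2), I take any non-trivial closed normal subgroup $L$ of $G$ and apply the dichotomy above. If $L=G$, essentiality is clear; if $L$ is non-trivial central, then condition (2) gives $\SL(n,\F)\cap L\neq\{I\}$ directly. Either way $\SL(n,\F)$ is essential in $G$, and Fact~\ref{Crit} yields minimality.

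The main obstacle I anticipate is justifying cleanly that every non-trivial closed normal subgroup of $G$ is either central or all of $G$, i.e. pinning down the normal subgroup structure of $\SL(n,\widehat{\F})$ as a topological group. The algebraic simplicity of $\PSL(n,\widehat{\F})$ handles the abstract (non-topological) normal subgroups, but one should check that no genuinely topological phenomena intervene: here the point is that $q(L)$ is an abstract normal subgroup of the abstractly simple group $\PSL(n,\widehat{\F})$, so the simplicity from Fact~\ref{fac:simple} suffices with no topological refinement needed, and the perfectness of $\SL(n,\widehat{\F})$ for $n\geq2$ (excluding only the tiny fields already ruled out) closes the gap $L\supseteq[G,G]=G$. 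A minor case to dispatch separately is $n=1$, where $\SL(1,\F)$ is trivial and both conditions hold vacuously.
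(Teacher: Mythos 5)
Your proposal is correct, and its skeleton coincides with the paper's: density of $\SL(n,\F)$ in $G:=\SL(n,\widehat{\F})$, minimality of $G$ via Theorem \ref{pro:pslntm}, and the Minimality Criterion (Fact \ref{Crit}) reducing both implications to essentiality of $\SL(n,\F)$ in $G$; your treatment of $(1)\Rightarrow(2)$ is identical to the paper's. The genuine difference is how you handle a non-trivial closed normal subgroup $L$ of $G$ in $(2)\Rightarrow(1)$. You prove a quasi-simplicity dichotomy: $q(L)$ is normal in the simple group $\PSL(n,\widehat{\F})$ (Fact \ref{fac:simple}), so either $L\subseteq Z(G)$, or $LZ(G)=G$, whence $[G,G]=[L,L]\subseteq L$ (central factors cancel in commutators) and \emph{perfectness} of $\SL(n,\widehat{\F})$ gives $L=G$, making essentiality trivial in that case. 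The paper instead splits on whether $L\cap Z(G)$ is trivial and, when it is, avoids perfectness entirely: from $q(L)=\PSL(n,\widehat{\F})$ it takes $A\in\SL(n,\F)$ that is not a root of $I$, finds $X\in L$ with $X=\nu A$ for some $n$-th root of unity $\nu$, and observes that $X^n=A^n$ is a non-trivial element of $L\cap\SL(n,\F)$. The trade-off: your route invokes perfectness of $\SL(n,K)$ for $|K|>3$ --- standard, and available in the cited \cite{RO}, but not among the paper's quoted facts --- and in return yields the cleaner structural statement that $\SL(n,\widehat{\F})$ is quasi-simple; the paper's route uses only the quoted simplicity of $\PSL$, at the price of needing an element of infinite order in $\SL(n,\F)$ (automatic for infinite $\F$) and the $X^n=A^n$ computation. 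Two small repairs to your write-up: the clause ``the determinant-one condition is preserved under passage to the closure'' only shows that the closure of $\SL(n,\F)$ lies \emph{inside} $\SL(n,\widehat{\F})$; density itself follows, e.g., because that closure is a subgroup containing every elementary matrix over $\widehat{\F}$ (the paper, to be fair, also asserts density without proof). And the step ``$q(L)=\PSL(n,\widehat{\F})$ implies $L\supseteq[G,G]$'' should carry the one-line commutator computation indicated above, since by itself surjectivity of $q|_L$ is weaker than containing the commutator subgroup.
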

\begin{proof}
	If $\F$ is finite, then $\F=\widehat{\F}$ and conditions $(1), (2)$ are  satisfied. So we may assume that $\F$ is infinite and $\widehat{\F}$
	is a local field in view of Remark \ref{r:standing}(2).

$(1)\Rightarrow (2)$: Immediately follows from the Minimality Criterion, as any non-trivial central subgroup of $\SL(n,\widehat{\F})$  is normal and closed (being finite).\\
$(2)\Rightarrow (1)$: Let us see that $\SL(n,\F)$ is essential in the minimal group $\SL(n,\widehat{\F})$. To this aim, let $L$ be a closed non-trivial normal subgroup of $\SL(n,\widehat{\F})$. If the central subgroup $Z(\SL(n,\widehat{\F}))\cap
L$ is non-trivial, then by our assumption $L\cap \SL(n,\F)$ is non-trivial. If $L$ trivially intersects  $Z(\SL(n,\widehat{\F}))$, then $q(L)$ is a nontrivial normal subgroup of
$\PSL(n,\widehat{\F}),$ where $q \colon \SL(n,\widehat{\F})\to \PSL(n,\widehat{\F})$ is the quotient map. Using the simplicity of $\PSL(n,\widehat{\F})$ 
(Fact \ref{fac:simple}), 
we deduce that $q(L)=\PSL(n,\widehat{\F}).$
Choose $A\in \SL(n,\F)$ such that $A$ is not a root of $I$. Since $q(L)=\PSL(n,\widehat{\F})$, there exist $X\in L$ and $n$th-roots of unity $\lambda,\mu\in \widehat{\F}$
such that $\lambda X=\mu A$. Therefore, $X^n=A^n$ is a non-trivial element of $L\cap \SL(n,\F)$.  This proves that 
$\SL(n,\F)$ is  essential in $\SL(n,\widehat{\F})$. By the Minimality Criterion, $\SL(n,\F)$ is  minimal being a dense essential subgroup of the minimal group
$\SL(n,\widehat{\F})$.
\end{proof}

\vskip 0.2cm 
\begin{corollary}\label{cor:p2}
Let $\F$ be a subfield of a local field.  
Then $\SL(2^k,\F)$ is minimal for every $k\in \N.$ If $\mathrm{char}(\F)=2,$ then  $\SL(2^k,\F)$ is totally minimal. 
\end{corollary}
\begin{proof} 
If $\mathrm{char}(\F)=2$ and $\lambda^{2^k}	=1$, where $\lambda\in \widehat{\F},$ then $\lambda=1.$ It follows that $Z(\SL(2^k,\widehat{\F}))$ is trivial. By Theorem \ref{t:SLSMALL},  $\SL(2^k,\F)$ is totally minimal. 

Now assume that $\mathrm{char}(\F)\neq 2$
and let $L$ be a non-trivial central subgroup of  $\SL(2^k,\widehat{\F})$. Then  $$I\neq -I\in \SL(2^k,\F)\cap
L.$$ This proves the minimality of $\SL(2^k,\F)$, in view of Proposition \ref{pro:minsl}.
\end{proof}

Since $\C$ is a local field, $\SL(n,\C)$ is totally minimal. We have the following trichotomy for $\Q(i)$.

\vskip 0.2cm 
\begin{corollary}\label{ex:slnotmin}
Let $n$ be a natural number.\ben \item If $n\in \{1,2,4\}$, then $\SL(n,\Q(i))$ is totally minimal.
\item  If $n=2^k$ for $k>2$, then $\SL(n,\Q(i))$ is minimal but not totally minimal.
\item If $n$ is not a power of $2$, then  $\SL(n,\Q(i))$ is not minimal.\een
\end{corollary}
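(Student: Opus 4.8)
The plan is to reduce each of the three cases to results already established in the excerpt by computing the center $Z(\SL(n,\Q(i)))$ and comparing it with $Z(\SL(n,\C))$, exploiting the fact that $\widehat{\Q(i)}=\C$ since $\Q(i)$ is dense in $\C$. The key arithmetic input is Lemma \ref{lem:rinqi}, which tells us that the only roots of unity lying in $\Q(i)$ are $\{\pm 1, \pm i\}$, so that $\mu_n \cap \Q(i)$ is governed entirely by divisibility of $n$ by $2$ and by $4$.

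For case (1), where $n \in \{1,2,4\}$, I would verify the criterion of Theorem \ref{t:SLSMALL}(3), namely $Z(\SL(n,\Q(i)))=Z(\SL(n,\C))$. Recall that $Z(\SL(n,\F))=\{\lambda I : \lambda \in \mu_n\}$. When $n \in \{1,2,4\}$, every $n$-th root of unity in $\C$ already lies in $\{\pm 1, \pm i\}\subseteq \Q(i)$ (for $n=1$ it is $\{1\}$, for $n=2$ it is $\{\pm 1\}$, for $n=4$ it is $\{\pm 1, \pm i\}$), so by Lemma \ref{lem:rinqi} the two centers coincide. Theorem \ref{t:SLSMALL} then yields that $\SL(n,\Q(i))$ is totally minimal. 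For case (3), where $n$ is not a power of $2$, I would instead invoke Proposition \ref{pro:minsl}: I must exhibit a non-trivial central subgroup of $\SL(n,\C)$ meeting $\SL(n,\Q(i))$ trivially. Since $n$ has an odd prime divisor $p$, the subgroup $L=\{\lambda I : \lambda^p=1\}$ is non-trivial and central in $\SL(n,\C)$, yet Lemma \ref{lem:rinqi} forces any $p$-th root of unity in $\Q(i)$ to equal $1$; hence $L \cap \SL(n,\Q(i))=\{I\}$, so condition (2) of Proposition \ref{pro:minsl} fails and $\SL(n,\Q(i))$ is not minimal. This is precisely the argument already rehearsed for $\SUT$ in Example \ref{ex:nmin}.

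The most interesting case is (2), where $n=2^k$ with $k>2$, and I expect it to be the main point requiring both Corollary \ref{cor:p2} and Theorem \ref{t:SLSMALL} in opposite directions. For minimality, I would apply Corollary \ref{cor:p2} directly: $\SL(2^k,\Q(i))$ is minimal for every $k$ because $-I$ always lies in the intersection of $\SL(2^k,\Q(i))$ with any non-trivial central subgroup of $\SL(2^k,\C)$ (as $\operatorname{char}\neq 2$ and $k\geq 1$ guarantees $2 \mid 2^k$, so $-I$ is central). To show it is \emph{not} totally minimal, I would use the failure of condition (3) in Theorem \ref{t:SLSMALL}: here $\mu_{2^k}$ contains a primitive $8$-th root of unity $\zeta_8=e^{2\pi i/8}$ whenever $2^k \geq 8$, i.e. $k\geq 3$, and $\zeta_8 \notin \{\pm 1,\pm i\}$, so by Lemma \ref{lem:rinqi} we have $\zeta_8 \notin \Q(i)$. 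Thus $Z(\SL(2^k,\Q(i)))=\{\pm I, \pm iI\}$ is a proper subgroup of $Z(\SL(2^k,\C))=\{\lambda I : \lambda^{2^k}=1\}$, so the centers differ and total minimality fails.

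The only subtlety I would take care to state cleanly is the identification $\widehat{\Q(i)}=\C$, which is needed so that the completed group $\SL(n,\widehat{\Q(i)})$ appearing in Theorems \ref{t:SLSMALL} and \ref{pro:minsl} is exactly $\SL(n,\C)$; this follows from Remark \ref{r:standing}(2) together with the density of $\Q(i)$ in $\C$ and the fact that $\Q(i)$, as a subfield of the local field $\C$, is locally retrobounded. With that identification in hand, all three cases follow formally, and no genuinely new computation beyond the root-of-unity count in Lemma \ref{lem:rinqi} is required; the trichotomy is really a bookkeeping statement about which roots of unity survive in $\Q(i)$ as $n$ varies.
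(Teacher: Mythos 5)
Your proposal is correct and follows essentially the same route as the paper: case (1) via the center-equality criterion of Theorem \ref{t:SLSMALL}, case (2) via Corollary \ref{cor:p2} together with the primitive $8$-th root of unity violating Theorem \ref{t:SLSMALL}(3), and case (3) via the odd-prime-divisor argument of Example \ref{ex:nmin} with Lemma \ref{lem:rinqi}. The only cosmetic difference is that in case (3) you invoke Proposition \ref{pro:minsl} where the paper cites the Minimality Criterion directly, but these amount to the same argument, and your explicit remark that $\widehat{\Q(i)}=\C$ is a point the paper leaves implicit.
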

\begin{proof}
$(1)$ If $n\in \{1,2,4\}$, then $Z(\SL(n,\Q(i)))= Z(\SL(n,\C))\subseteq \{\pm I, \pm iI\}.$  By Theorem \ref{t:SLSMALL}, $\SL(n,\Q(i))$ is totally minimal.

$(2)$ By Corollary \ref{cor:p2}, $\SL(n,\Q(i))$ is minimal. Let $\rho_8=e^{\frac {\pi i}{4}}$ be the $8$-th primitive root of unity. As $n=2^k$ for $k>2$, we have $\rho_8I \in Z(\SL(n,\C))$ but $\rho_8I \notin  Z(\SL(n,\Q(i))).$ So, $\SL(n,\Q(i))$ is not totally minimal by Theorem \ref{t:SLSMALL}.

$(3)$ 
One can show using the same arguments from Example \ref{ex:nmin} that the finite central subgroup  $L=\{\lambda I:\lambda^p=1\}$ of  $\SL(n,\C)$ trivially intersects  $\SL(n,\Q(i)).$ This means that $\SL(n,\Q(i))$ is not essential in $\SL(n,\C).$ By the Minimality Criterion, $\SL(n,\Q(i))$ is not minimal. 
\end{proof}

Now we consider the field of $p$-adic numbers $\Q_p$. It is known 
 that $\Q_p$ contains $p-1$ roots of unity in case $p>2$ and that $\pm 1$ are the only roots of unity in $\Q_2$ (see \cite[p. 15]{V}). 

\vskip 0.2cm 
\begin{corollary} \label{c:p-adic} 
	 Let $\F$ be a topological subfield of $\Q_p$.
\ben 
\item $\SL(n,\F)$ and $\PSL(n,\F)$ are totally minimal for every $n$ which is coprime to $p-1$ (e.g., arbitrary $n$ for $p=2$). 

\item If  
$p-1$ is not a power of $2,$
then $\SL(p-1,\Q)$ is not minimal. 
\een
\end{corollary}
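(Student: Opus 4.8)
The plan is to derive both statements from the criteria already established, reducing each to an elementary computation of roots of unity in $\Q_p$. Throughout I use that, for a subfield $\F$ of $\Q_p$, the completion $\widehat\F$ is the closure of $\F$ in $\Q_p$ (Remark \ref{r:standing}(2)), so $\widehat\F \subseteq \Q_p$; in part (2) we have $\widehat\Q = \Q_p$ since $\Q$ is dense in $\Q_p$.

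For part (1), by Theorem \ref{t:SLSMALL} it suffices to prove that $Z(\SL(n,\F)) = Z(\SL(n,\widehat\F))$, and since $\F \subseteq \widehat\F$ gives the inclusion $Z(\SL(n,\F)) \subseteq Z(\SL(n,\widehat\F))$ automatically, the task is to show that every $n$-th root of unity $\lambda \in \widehat\F \subseteq \Q_p$ already lies in $\F$. I would invoke the known structure of the roots of unity of $\Q_p$: they form a cyclic group of order $p-1$ when $p$ is odd, and equal $\{\pm 1\}$ when $p = 2$. Hence any $n$-th root of unity in $\Q_p$ has order dividing $\gcd(n, p-1)$. When $\gcd(n, p-1) = 1$ this forces $\lambda = 1$ for odd $p$, while for $p = 2$ it gives $\lambda \in \{\pm 1\}$; in either case $\lambda \in \{\pm 1\} \subseteq \F$. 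Thus $Z(\SL(n,\widehat\F)) \subseteq \SL(n,\F)$, the two centers coincide, and Theorem \ref{t:SLSMALL} yields total minimality of both $\SL(n,\F)$ and $\PSL(n,\F)$.

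For part (2), note first that $p-1$ not being a power of $2$ forces $p$ to be an odd prime, so $p - 1$ is even and admits an odd prime divisor $\ell$. I would apply Proposition \ref{pro:minsl} with $\F = \Q$ and $\widehat\F = \Q_p$: it suffices to exhibit a non-trivial central subgroup $L$ of $\SL(p-1, \Q_p)$ with $L \cap \SL(p-1,\Q) = \{I\}$. Take $L = \{\lambda I : \lambda^{\ell} = 1\}$. Since $\ell \mid p-1$ and $\Q_p$ contains all $(p-1)$-th roots of unity, $\Q_p$ contains a primitive $\ell$-th root of unity, so $L$ is cyclic of order $\ell > 1$; moreover $\det(\lambda I) = \lambda^{p-1} = (\lambda^{\ell})^{(p-1)/\ell} = 1$, so $L \leq Z(\SL(p-1,\Q_p))$. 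Finally, an element $\lambda I \in L \cap \SL(p-1,\Q)$ has $\lambda \in \Q$ and $\lambda^{\ell} = 1$; as the only rational roots of unity are $\pm 1$ and $\ell$ is odd, necessarily $\lambda = 1$. Hence $L$ meets $\SL(p-1,\Q)$ trivially, condition (2) of Proposition \ref{pro:minsl} fails, and $\SL(p-1,\Q)$ is not minimal. Alternatively, since $\SL(p-1,\Q)$ is dense in $\SL(p-1,\Q_p)$, the same $L$ witnesses that it is not essential, so non-minimality follows from the Minimality Criterion (Fact \ref{Crit}).

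The genuinely arithmetic input, and the only real obstacle, is the description of the group of roots of unity of $\Q_p$ (cyclic of order $p-1$ for odd $p$, given by the Teichm\"uller representatives); once this is in hand, both parts are routine applications of Theorem \ref{t:SLSMALL} and Proposition \ref{pro:minsl}. The one point requiring care is verifying the determinant condition $\lambda^{p-1} = 1$ in part (2), which is exactly where the divisibility $\ell \mid p - 1$ is used.
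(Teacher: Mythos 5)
Your proof is correct and takes essentially the same route as the paper: part (1) is the root-of-unity computation in $\Q_p$ fed into the center-equality criterion of Theorem \ref{t:SLSMALL} (the paper cites Corollary \ref{cor:tmsln}, which is the same criterion packaged), and part (2) is exactly the paper's argument, taking $L=\{\lambda I:\lambda^{\ell}=1\}$ for an odd prime $\ell \mid p-1$ and noting that $L$ is central, non-trivial, and meets $\SL(p-1,\Q)$ trivially, so essentiality (equivalently, condition (2) of Proposition \ref{pro:minsl}) fails. The only cosmetic slip is the sentence claiming that an $n$-th root of unity in $\Q_p$ has order dividing $\gcd(n,p-1)$, which is false for $p=2$ (the torsion of $\Q_2^{\times}$ is $\{\pm 1\}$, of order $2$), but you immediately treat $p=2$ separately via $\lambda\in\{\pm 1\}\subseteq\F$, so the argument stands.
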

\begin{proof}
	(1) Use  
	Corollary \ref{cor:tmsln}.

	(2) $\SL(p-1,\Q)$ is not essential in $\SL(p-1,\Q_p)$. Indeed, let $q$ be an odd prime dividing $p-1$. Then the finite central subgroup $L=\{\lambda I: \ \lambda^q=1\}$ of $\SL(p-1,\Q_p)$ trivially intersects $\SL(p-1,\Q)$. 	
\end{proof}

\vskip 0.2cm 
The next theorem 
is one of our main results. It 
 provides a characterization of Fermat primes via the minimality of some topological matrix groups. 
\begin{theorem}\label{thm:fermat}
 For an odd prime  $p$ the following conditions are equivalent: \ben\item $p$ is a  Fermat prime;
\item $\SL(p-1,(\Q,\tau_p))$ is minimal, where $(\Q,\tau_p)$ is the field of rationals with the $p$-adic topology;
\item  $\SL(p-1,\Q(i))$ is minimal, where $\Q(i) \subset \C$ is the Gaussian rational field.
\een  
\end{theorem}
\begin{proof}
If $p$ is a Fermat prime, then $p-1=2^k$ for some positive integer $k.$ By Corollary \ref{cor:p2}, $\SL(p-1,(\Q,\tau_p))$ and $\SL(p-1,\Q(i))$ are both minimal. If $p$ is not a Fermat prime, then $p-1$ is not a power of $2.$  By  Corollary \ref{c:p-adic}(2),  $\SL(p-1,(\Q,\tau_p))$ is not minimal.
By Corollary \ref{ex:slnotmin}(3), $\SL(p-1,\Q(i))$  is not minimal.
\end{proof}

\vskip 0.2cm 
\begin{remark} \ 
	\ben
\item One cannot replace in item $(3)$ of Theorem \ref{thm:fermat}  the field $\Q(i)$ with its subfield $\Q.$  Indeed, since $\Q$ is also a subfield of $\R$
it follows from  Corollary \ref{cor:tmsln}(2) that  $\SL(n,\Q)$ is (totally) minimal for every $n\in \N.$
\item If $p$ is a Fermat prime then every odd $n$ is coprime to $p-1.$ Hence,   $\SL(n,\F)$ and $\PSL(n,\F)$ are totally minimal for every subfield $\F$ of 
$\Q_p$, in view of Corollary \ref{c:p-adic}(1).
\een
\end{remark}


\vskip 0.7cm


		\bibliographystyle{plain}


\end{document}